\theoremstyle{plain}
\newtheorem{Thm}{Theorem}[section]
\newtheorem{Prop}[Thm]{Proposition}
\newtheorem{Lem}[Thm]{Lemma}
\newtheorem{Que}[Thm]{Question}
\newtheorem{Cor}[Thm]{Corollary}
\theoremstyle{remark}
\newtheorem{Rem}[Thm]{Remark}
\theoremstyle{definition}
\newtheorem{Def}[Thm]{Definition}
\newtheorem{Eg}[Thm]{Example}
\newtheorem*{ZDO}{Zariski-dense orbit conjecture}
\def\A{\mathbb A}
\def\B{\mathbb B}
\def\Z{\mathbb Z}
\def\Q{\mathbb Q}
\def\R{\mathbb R}
\def\C{\mathbb C}
\def\D{\mathbb D}
\def\P{\mathbb P}
\def\sM{\mathcal M}
\def\sO{\mathcal O}
\def\sL{\mathcal L}
\def\sX{\mathcal X}
\def\sC{\mathcal C}
\def\Om{\Omega}
\def\la{\lambda}
\def\rog{{\rm rog}}
\def\PGL{{\rm PGL}}
\def\Spec{{\rm Spec}}
\def\Zar{{\rm Zar}}
\def\alg{{\rm alg}}
\def\Per{{\rm Per}}
\def\Aut{{\rm Aut}}
\def\Var{{\rm Var}}
\def\Gal{{\rm Gal}}
\def\Rat{{\rm Rat}}
\def\Hom{{\rm Hom}}
\def\PHom{{\rm PHom}}
\def\FL{{\rm FL}}
\begin{document}
\title[]{Space spanned by characteristic exponents}

\author{Zhuchao Ji}
\address{Institute for Theoretical Sciences, Westlake University, Hangzhou 310030, China}
\email{jizhuchao@westlake.edu.cn}

\author{Junyi Xie}
\address{Beijing International Center for Mathematical Research, Peking University, Beijing 100871, China}
\email{xiejunyi@bicmr.pku.edu.cn}

\author{Geng-Rui Zhang}
\address{School of Mathematical Sciences, Peking University, Beijing 100871, China}
\email{grzhang@stu.pku.edu.cn}

\subjclass[2020]{Primary 37P35; Secondary 37F10, 11G35}

\bibliographystyle{plain}

\begin{abstract}
We prove several rigidity results on multiplier spectrum and length spectrum. For example, we show that for every non-exceptional rational map $f:\P^1(\C)\to\P^1(\C)$ of degree $d\geq2$, the $\Q$-vector space generated by all the (finite) characteristic exponents of periodic points of $f$ has infinite dimension. This answers a stronger version of a question by Levy and Tucker. Our result can also be seen as a generalization of recent results of Ji-Xie and of Huguin which proved Milnor's conjecture about rational maps having integer multipliers. We also get a characterization of postcritically finite maps by using their length spectra. Finally as an application of our result, we get a new proof of the Zariski-dense orbit conjecture for endomorphisms on $(\P^1)^N, N\geq 1$. 
\end{abstract}

\maketitle
\tableofcontents

\section{Introduction}
Let $f:\P^1\to\P^1$ be a rational map over $\C$ of degree $d\geq2$.
Our aim is to study the $\Q$-vector space spanned by the characteristic exponents of periodic points of a rational map on $\P^1(\C)$ and prove some rigidity results.

\subsection{Multiplier, length, and characteristic exponent}
Let $z_0\in\P^1(\C)$ be a periodic point of $f$ with exact period $n$. Define $n_f(z_0):=n$. We write $n(z_0)$ for simplicity when the map $f$ is clear. The \emph{multiplier} $\rho_f(z_0)$ of $f$ at $z_0$ is defined to be the differential $df^n(z_0)\in\C$. We write $\rho(z_0)$ for simplicity when the map $f$ is clear. 
The \emph{length} of $f$ at $z_0$ is the absolute value $|\rho_f(z_0)|$. The multiplier and length are invariant under conjugacy. The \emph{characteristic exponent} of $f$ at $z_0$ is defined to be $\chi_f(z_0):=n^{-1}\log\lvert\rho_f(z_0)\rvert$ (when $\rho_f(z_0)\neq0$). 

\medskip

Denote by $\Per(f)(\C)$ the set of all periodic points in $\P^1(\C)$ of $f$ and define $\Per^*(f)(\C):=\{z_0\in\Per(f)(\C):\rho_f(z_0)\neq0\}$. When the base field $\C$ is clear, we also write $\Per(f)$ and $\Per^*(f)$ for simplicity.

\medskip

The \emph{Lyapunov exponent} (with respect to the maximal entropy measure) of $f$ is defined by $$\sL_f:=\int_{\P^1(\C)}\log|d f| d\mu_f,$$
where $\mu_f$ is the unique maximal entropy measure of $f$ on $\P^1(\C)$ which is also the canonical measure of $f$ on $\P^1(\C)$ (see \cite{Lyubich83,FLM83,Mane83,Zdunik90}), and the norm of the differential is computed with respect to the spherical metric. In fact, the value of the Lyapunov exponent does not change with the spherical metric replaced by any K{\"a}hler metric on $\P^1_\C$ in the definition. It is known that $\sL_f\geq\log(d)/2$, and the equality holds if and only if $f$ is a Latt\`es map in the sense of Definition \ref{excep} (see \cite{Zdunik90,zdunik2014characteristic}). The quantity $\exp(\sL_f)$ is considered as an average of $|d f|$ on $\P^1(\C)$, which measures the average expansion rate of $f$ along a typical orbit.

\subsection{Exceptional maps}
In complex dynamics, the exceptional maps defined below are often considered as exceptional examples among all rational maps of degree $\geq2$. We may view them as rational maps on $\P^1(\C)$ related to algebraic groups.

\begin{Def}\label{excep}
Let $f:\P^1\to\P^1$ be an endomorphism over $\C$ of degree $d\geq 2$. 
\begin{itemize}
	\item The map $f$ is called \emph{Latt\`es} if there is an endomorphism $\phi$ on an elliptic curve $E$ such that $\phi$ is semi-conjugate to $f$. Here, for an endomorphism $g$ on an algebraic variety $X$ and an endomorphism $h$ on an algebraic variety $Y$, we say $h$ is \emph{semi-conjugate} to $g$ if there is a dominant morphism $\pi:X\to Y$ such that $\pi\circ g=h\circ\pi$. Furthermore, the map $f$ is called \emph{flexible Latt\`es} if there are an elliptic curve $E$ and $n\in\Z\setminus\{0,\pm1\}$ such that $f$ is semi-conjugate to the multiplication-by-$n$ map $[n]$ on $E$ via the quotient map modulo $\{\pm 1\}$ on $E$. Otherwise, it is called \emph{rigid Latt\`es}.
	\item We say that $f$ is of \emph{monomial type} if it is semi-conjugate to the map $z\mapsto z^n$ on $\P^1$ for some integer $n\neq0,\pm1$.
\end{itemize}
We call $f$ \emph{exceptional} if it is Latt\`es or of monomial type. It is well-known that $f$ is exceptional if and only if some iterate $f^k$ is exceptional ($k\in\Z_{>0}$).
\end{Def}

\subsection{Statement of the main results}
We fix an embedding of the algebraic closure $\overline{\Q}$ of $\Q$ in $\C$ and consider all number fields as subfields of $\C$. Denote the usual absolute value on $\C$ by $| \cdot |$.

\medskip

Our first result shows that the definition field of a rational map is determined by its length spectrum, provided that it is not a flexible Latt\`es map. 
\begin{Thm}\label{thmalglenintro}Let $f:\P_\C^1\to\P_\C^1$ be a rational map of degree at least $2$.
Assume that $f$ is not a flexible Latt\`es map and for every $x\in\Per(f)(\C)$, $|\rho_f(x)|\in\overline{\Q}$. Then $f$ is defined over $\overline{\Q}$.
\end{Thm}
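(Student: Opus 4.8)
\emph{Proof strategy.}
By McMullen's finiteness theorem for the multiplier map (and the refinements in the work of Ji--Xie and of Huguin alluded to above) there is an integer $N=N(d)$ — finitely many periods already suffice, since $\sM_d$ is of finite type — such that the morphism sending a class $[g]\in\sM_d$ outside the Zariski closed flexible Latt\`es locus to the coefficients of the multiplier polynomials $\sigma_1(g),\dots,\sigma_N(g)$ is quasi-finite, and this morphism is defined over $\Q$; here $\sigma_n(g)$ denotes the monic polynomial whose roots, with multiplicity, are the multipliers $\rho_g(x)$ at periodic points $x$ of period dividing $n$. Consequently, if every multiplier $\rho_f(x)$, $x\in\Per^*(f)$, of our map were algebraic, then the coefficients of $\sigma_1(f),\dots,\sigma_N(f)$ would lie in $\overline{\Q}$, the image of $[f]$ under the morphism above would be a $\overline{\Q}$-point, its fibre would be a finite $\overline{\Q}$-scheme, and hence $[f]\in\sM_d(\overline{\Q})$ — which is exactly the assertion that $f$ is defined over $\overline{\Q}$ (necessarily only up to conjugacy, as one sees by conjugating a map over $\overline{\Q}$ by a transcendental M\"obius transformation). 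Thus the theorem reduces to the following claim: \emph{if $f$ is not flexible Latt\`es and $|\rho_f(x)|\in\overline{\Q}$ for every $x\in\Per^*(f)$, then $\rho_f(x)\in\overline{\Q}$ for every $x\in\Per^*(f)$.}

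To prove this, assume toward a contradiction that some multiplier of $f$ is transcendental; then $f$ is not defined over $\overline{\Q}$, and spreading out we realize $f=f_{t_0}$ as a member of an algebraic family $\{f_t\}_{t\in V}$ over an irreducible variety $V$ defined over $\overline{\Q}$ with $\dim V\ge1$, where $t_0\in V(\C)$ has coordinates algebraically independent over $\overline{\Q}$; since the flexible Latt\`es locus is Zariski closed and does not contain $t_0$, we may assume no $f_t$ is flexible Latt\`es. Every multiplier of every period extends to an algebraic function on a finite cover of $V$, and the coefficients of $\sigma_1(f_t),\dots,\sigma_N(f_t)$ are regular functions on $V$ defined over $\overline{\Q}$; these are not all constant, since otherwise the multipliers of $f_{t_0}$ — the same as those of $f_{t_1}$ for any $t_1\in V(\overline{\Q})$ — would all be algebraic. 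Hence the family is non-isotrivial and at least one multiplier branch is non-constant, whereas the hypothesis says that $|\psi(t_0)|\in\overline{\Q}$ holds simultaneously for every multiplier branch $\psi$ of every period. The plan is to show this combination is impossible for a non-isotrivial, non-flexible-Latt\`es family: since $\overline{\Q}(V)$ has finite transcendence degree over $\overline{\Q}$, the multiplier branches satisfy an abundance of polynomial relations over $\overline{\Q}$, and combining these with a height and product-formula estimate over an arithmetic model of $V$ — controlling the value at the transcendental point $t_0$ of an algebraic function on $V$ whose modulus at $t_0$ happens to be algebraic — one would conclude that all multiplier branches are constant at $t_0$, hence that the family is isotrivial, a contradiction.

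I expect this last step to be the genuine obstacle. Algebraicity of $|\alpha|$ is not preserved by $\Aut(\C/\overline{\Q})$ — the set $\{\alpha\in\C:|\alpha|\in\overline{\Q}\}$ is a countable union of circles and is very far from Galois stable — so no descent shortcut is available, and a single non-constant algebraic function can perfectly well take a value of algebraic absolute value at a transcendental point, as $t\mapsto t$ does at $t=e^{\sqrt{-1}}$. What has to be used is that the \emph{entire} multiplier spectrum of $f$ has algebraic absolute value at the \emph{one} point $t_0$, an unlikely-intersection phenomenon; and it is precisely here that the quantitative form of the multiplier-spectrum rigidity, the arithmetic of the family, and the exclusion of flexible Latt\`es maps (along a non-isotrivial family of which every multiplier is nonetheless constant) all have to enter.
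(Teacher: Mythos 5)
Your first paragraph is a correct but essentially content-free reformulation: by McMullen's rigidity plus a spread-out argument, the theorem is indeed equivalent to the claim that algebraicity of all lengths $|\rho_f(x)|$ forces algebraicity of all multipliers $\rho_f(x)$. That equivalence transfers all of the difficulty intact into your second paragraph, and there — as you candidly concede in your third — you have no argument. The proposed ``height and product-formula estimate over an arithmetic model of $V$'' is a placeholder, not a proof, and the phenomenon you would need to exclude (a transcendental parameter $t_0$ at which every one of infinitely many non-constant algebraic multiplier branches takes a value of algebraic modulus) is precisely the crux of the theorem. So the proposal has a genuine gap at its central step, and your diagnosis that the non-Galois-stability of $\{\alpha:|\alpha|\in\overline{\Q}\}$ blocks any descent shortcut is accurate.

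The paper closes exactly this gap by a mechanism your outline does not contain, and it does not route through the multiplier spectrum at all. The ingredients are: (i) although $|\cdot|$ is not algebraic, $|\cdot|^2$ is \emph{real} algebraic, and after applying the Weil restriction with respect to $K/L$ with $L=K\cap\R$ the squared length map $q_{s_1,\dots,s_n}=|\la_{s_1,\dots,s_n}|^2$ on the moduli space of maps with marked repelling periodic points becomes an algebraic function defined over $L$; (ii) Lemma \ref{lemvaluek} then shows that if $q_{s_1,\dots,s_n}$ takes a value in $L^{s_n}$ at $[f_n]$, it is constant on the entire $L$-Zariski closure $Z^{\R}([f_n])$, which has positive real dimension when $[f]$ is transcendental (Lemma \ref{lemcomparezzr}) — this is the correct substitute for the unavailable Galois descent; (iii) the images $V_n=[\phi_{n,0}](Z^{\R}([f_n]))$ form a decreasing chain of \emph{admissible} subsets of $\sM_d(\C)$, which stabilizes by the Noetherianity theorem for admissible sets (Theorem \ref{thmNoetherianad}), yielding an infinite set of non-flexible-Latt\`es conjugacy classes whose main repelling length spectra all contain the big multiset $RL^*(f)$; (iv) this contradicts the rigidity of the \emph{length} spectrum (Theorem \ref{thmbigspecrig}), not McMullen's multiplier rigidity. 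Without a device of type (i)--(ii) converting ``$|\la|([f_n])\in L^{s_n}$'' into ``the length data is constant on a positive-dimensional real-algebraic set,'' and without the chain condition (iii) to handle infinitely many periods at once, your outline cannot be completed.
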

\begin{Rem}\label{remcofinitethm1.2}
	In Theorem \ref{thmalglen}, we prove a more general version of Theorem \ref{thmalglenintro}, in which $\overline{\Q}$ can be replaced by any algebraically closed subfield of $\C$ invariant under complex conjugation. In fact, the actual proof shows that we can weaken the conditions in Theorems \ref{thmalglenintro} and \ref{thmalglen} by discarding the corresponding condition for finitely many periodic points of $f$; see Remark \ref{remcofinitesec2}. For example, in Theorem \ref{thmalglenintro}, we can require only that $|\rho_f(x)|\in\overline{\Q}$ for all but finitely many $x\in\Per(f)(\C)$, and the conclusion still holds.
\end{Rem}

McMullen's rigidity of multiplier spectrum \cite[Corollary 2.3]{McMullen1987} asserts that multipliers at all periodic points determine the conjugacy class of rational maps up to only finitely many choices, except for the flexible Latt\`es family. With a standard spread-out argument, McMullen's rigidity theorem implies that, for a rational map $f$ of degree at least $2$ which is not flexible Latt\`es, if its multipliers at periodic points are all algebraic, then $f$ is defined over $\overline{\Q}$. We sketch the argument as follows. Let $f$ be a rational map over $\C$ of degree $d\geq2$ that is not flexible Latt\`es with all multipliers in $\overline{\Q}$. Noetherianity implies that the set $D$ of conjugacy classes $[g]$ (in the moduli space $\sM_d$) such that $g$ has the same multiplier spectrum (i.e., all multipliers at periodic points) as $f$ is an algebraic subset of $\sM_d$ defined over $\overline{\Q}$, since the multiplier spectrum morphism is defined over $\Q$ and all multipliers of $f$ are in $\overline{\Q}$. Then McMullen's rigidity implies that $\dim(D)=0$, hence $D$ is a set of finitely many $\overline{\Q}$-points. Thus $f$ is defined over $\overline{\Q}$ as $[f]\in D$.

Theorem \ref{thmalglenintro} is a generalization of this result from multiplier spectrum to length spectrum (which contains less information). The rigidity of length spectrum was proved by Ji-Xie \cite[Theorem 1.5]{Ji2023}. However, the spread-out argument does not apply directly in this case as the length spectrum map (and its square) is not algebraic on the moduli space of rational maps. Indeed, as shown in \cite[\S 8.1]{Ji2023}, its square is not even real algebraic (though it is still semialgebraic). In \S \ref{sectiontranpoint}, we introduce a method to do the spread-out argument respecting the structure of $\sM_d(\C)$ as a real algebraic variety using Weil restriction. Another difficulty in the length spectrum case is the lack of noetherianity for semialgebraic subsets. We overcome this difficulty by using the notion of admissible subsets introduced in \cite{Ji2023}.

\medskip

The following two results concern the $\Q$-vector space spanned by all the (finite) characteristic exponents of periodic points. 
\begin{Thm}\label{thmmaindim}
	Let $f:\P^1(\C)\to\P^1(\C)$ be a rational map of degree $d\geq2$. Suppose that $f$ is not exceptional. Then the $\Q$-vector space generated by $\{\chi_f(z):z\in\Per^*(f)\}$ in $\R$ has infinite dimension.
\end{Thm}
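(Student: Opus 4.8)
The plan is to argue by contradiction: suppose that $V:=\sum_{z\in\Per^*(f)}\Q\cdot\chi_f(z)\subset\R$ is finite dimensional, say $\dim_\Q V=r$, and deduce that $f$ must be exceptional. Passing to an iterate changes neither the hypothesis (one checks $\chi_{f^m}(z)=m\,\chi_f(z)$, so $f^m$ has the same associated space $V$, and $f^m$ is again non-exceptional) nor the conclusion, so I may pass to iterates freely; also every flexible Latt\`es map is exceptional, so $f$ is in particular not flexible Latt\`es. It is convenient to reformulate the hypothesis multiplicatively: the subgroup $\Gamma\leq(\R_{>0},\cdot)$ generated by all the lengths $|\rho_f(z)|$, $z\in\Per^*(f)$, has finite rank $r$. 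A first observation, used repeatedly below, is that $\Per(f)$ and $\Per^*(f)$ are stable under every field automorphism $\sigma$ of $\C$ fixing the coefficients of $f$, with $\rho_f(\sigma z)=\sigma(\rho_f(z))$; hence $\Gamma$ in fact contains every archimedean absolute value of every Galois conjugate of every multiplier of $f$, which makes the finite-rank hypothesis extremely rigid.

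The first step is to reduce to the case where $f$ is defined over $\overline{\Q}$. Fix periodic points $z_1,\dots,z_r\in\Per^*(f)$ whose characteristic exponents form a basis of $V$. The hypothesis ``$\dim_\Q V\leq r$'' then says that for every $z\in\Per^*(f)$ there exist $N\in\Z_{>0}$ and $m_1,\dots,m_r\in\Z$ with $|\rho_f(z)|^{N}=\prod_{i}|\rho_f(z_i)|^{m_i}$, and each such relation is a \emph{semialgebraic} condition on $[f]\in\sM_d(\C)$ (recall from \cite[\S 8.1]{Ji2023} that the square of the length spectrum is semialgebraic, though not real algebraic). So the set of $[g]\in\sM_d(\C)$ satisfying the corresponding family of relations is a countable union of semialgebraic sets, which is moreover \emph{admissible} in the sense of \cite{Ji2023}. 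Feeding this admissible locus into the Weil-restriction / real-algebraic spread-out machinery of \S\ref{sectiontranpoint} — exactly as in the proof of Theorem \ref{thmalglenintro} — and using the rigidity of the length spectrum \cite[Theorem 1.5]{Ji2023} together with the fact that $f$ is not flexible Latt\`es, one should conclude that $[f]$ lies in a zero-dimensional $\overline{\Q}$-subvariety of $\sM_d$; after conjugating, $f$ is then defined over a number field $K$.

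The arithmetic core is then to rule out a non-exceptional $f$ over $K$ with $\Gamma$ of finite rank. Enlarging $K$, let $T$ be the finite set of places of $K$ of bad reduction; for $\mathfrak p\notin T$ one has $|\rho_f(z)|_{\mathfrak p}\leq1$ for all $z\in\Per^*(f)$, with strict inequality precisely when the cycle of $z$ meets a critical point modulo $\mathfrak p$. Applying the product formula to each multiplier $\rho_{f^n}(z)$ and using the Galois remark (so that the archimedean part of the formula already lies in $V$), one finds that the archimedean and the finite contributions are rigidly tied to one another, so that the finite-rank constraint on $\Gamma$ propagates to the full multiplier and height data of $f$. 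Combining this with the equidistribution of periodic points toward the canonical measures $\mu_{f,v}$ at all places $v$ — which gives $\tfrac1n\log|\rho_{f^n}(z)|_v\to\sL_{f,v}:=\int\log|f'|_v\,d\mu_{f,v}$ along a density-one family of periodic orbits — and with Zdunik's inequality $\sL_f>\tfrac12\log d$ and the non-degeneracy of the Lyapunov spectrum for non-exceptional maps, I would show that this forces the lengths $|\rho_f(z)|$ to be, apart from finitely many exceptions, all powers $c^{\,n_f(z)}$ of one fixed $c>0$; that degeneracy occurs exactly for maps semiconjugate to an affine endomorphism of an elliptic curve or of $\mathbb G_m$, i.e. for exceptional maps, contradicting the assumption. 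Hence $\dim_\Q V=\infty$.

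I expect the last step to be the real obstacle: producing, for a non-exceptional $f$ over a number field, an infinite multiplicatively independent family of multipliers — equivalently, a Zsigmondy / primitive-prime-divisor type statement for the multiplier sequences that keeps track simultaneously of the finite places and of all archimedean conjugates — and proving that the only obstruction to such a family is the exceptional affine-group model. (That the archimedean places, not just the primes, must be controlled is already visible from examples such as $f(z)=z^2+i$, whose multipliers are algebraic integers supported at the single prime $2$ in their finite parts, yet for which the theorem predicts $\dim_\Q V=\infty$.) This is where non-exceptionality has to be used decisively — presumably through Zdunik's thermodynamic rigidity and an analysis of the critical orbits modulo primes — and it is also the step that recovers, in the special case of integer multipliers, the results of Ji-Xie and of Huguin on Milnor's conjecture. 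A secondary difficulty, already present in the proof of Theorem \ref{thmalglenintro}, is the reduction to $\overline{\Q}$: since a basis of $V$ may consist of transcendental numbers, ``$\dim_\Q V\leq r$'' is not an algebraic condition on $[f]$, so one genuinely needs the real-algebraic Weil-restriction device of \S\ref{sectiontranpoint} rather than a classical spread-out argument, and one must verify that the admissible locus that is cut out is thin enough for the Galois argument to conclude.
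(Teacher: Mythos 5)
Your submission is a plan rather than a proof: both of its load-bearing steps are either left open or rest on mechanisms that do not work as described, and neither coincides with what the paper actually does.

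First, the reduction to $\overline{\Q}$. The locus in $\sM_d(\C)$ cut out by the relations $|\rho_g(z)|^{N}=\prod_i|\rho_g(z_i)|^{m_i}$ is a countable union over all tuples $(N,m_1,\dots,m_r)$ and over all marked periodic points; a countable union of semialgebraic sets is not admissible, and the descending chain condition (Theorem \ref{thmNoetherianad}) is available only for admissible sets. Worse, the rigidity input you want to invoke (Theorem \ref{thmbigspecrig}) requires a \emph{fixed} big length spectrum $A$ contained in $RL^*(g)$ for every $g$ in the locus; in the proof of Theorem \ref{thmalglen} this constancy of the lengths along $Z^{\R}([f_n])$ is extracted from Lemma \ref{lemvaluek}, which needs the length values to lie in $K$. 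Multiplicative relations among possibly transcendental lengths force no such constancy, so the spread-out argument collapses exactly at the point you flag as a ``secondary difficulty.'' The paper sidesteps this entirely by splitting on PCF: a PCF non-flexible-Latt\`es map is defined over $\overline{\Q}$ by Thurston rigidity at no cost, and the non-PCF case (Theorem \ref{thminvo}) is proved over an \emph{arbitrary} algebraically closed field of characteristic $0$, the descent to $\overline{\Q}$ being achieved by specializing the infinite critical orbit via \cite[Lemma 3.3]{Ghioca2018} rather than by any length-spectrum rigidity.

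Second, the arithmetic core, which you yourself identify as ``the real obstacle,'' is precisely what is missing. For $f$ over $\overline{\Q}$ the paper does not prove a Zsigmondy-type statement; it observes that finite-dimensionality of $V$ puts every $|\rho_f(z)|^{n}$ into one number field $K$, takes Zdunik's sequence with $\chi_f(x_n)\to a>\sL_f$, and tests Yuan's equidistribution theorem on the system $F=f\times\overline{f}$ acting on $\P^1\times\P^1$ — the product with the conjugate map is the essential trick, since $|\rho_f(x_n)|^{2}=\det(dF^{l}(p_n))$ is Galois-equivariant while $|\rho_f(x_n)|$ is not an algebraic quantity on $\P^1$ alone — after using dynamical Manin--Mumford \cite{Ghioca2018c} to make the limiting curve $F$-invariant; the contradiction is $2a\leq\int\log|\det(dF)|\,d\mu=2\sL_f$. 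For non-PCF $f$ the engine is Siegel's theorem via \cite[Lemma 4.1]{Benedetto2012}: infinitely many places at which the wandering critical point becomes periodic yield an upper-triangular system of valuations $(\phi_{v_i})$ against multipliers $(\la(x_i))$, hence infinitely many multiplicatively independent lengths even after averaging with the involution $\tau$. Neither of these two mechanisms appears in your sketch, so the theorem is not established by it.
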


\medskip

\begin{Thm}\label{thmmainnumf}
	Let $f:\P^1(\C)\to\P^1(\C)$ be a rational map of degree $d\geq2$. Assume that there exists a number field $K$ such that 
	\begin{equation}\label{equassnorm}
		\forall z_0\in\Per(f),\ \exists
		n=n(z_0)\in\Z_{>0},\ \lvert\rho_f(z_0)\rvert^n\in K.
	\end{equation} Then $f$ is exceptional.
\end{Thm}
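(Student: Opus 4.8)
The plan is to derive Theorem \ref{thmmainnumf} from the combination of Theorem \ref{thmmaindim} and Theorem \ref{thmalglenintro}, reducing the number-field hypothesis to a statement about finite-dimensionality of the span of characteristic exponents. First I would observe that condition \eqref{equassnorm} has an immediate consequence: for every periodic point $z_0\in\Per^*(f)$, the characteristic exponent $\chi_f(z_0)=n(z_0)^{-1}\log|\rho_f(z_0)|$ lies in the $\Q$-vector space
\[
V_K:=\Big\{\tfrac1m\log|a|:\ m\in\Z_{>0},\ a\in K^\times\Big\}\otimes_{\Z}\Q\ \subseteq\ \R,
\]
and more precisely in the $\Q$-span of $\{\log|a|:a\in\sO_{S}^\times\cup\{\text{finite places}\}\}$ — concretely, writing $K$ for the number field from the hypothesis, the value $\log|\rho_f(z_0)^{n}|$ for suitable $n$ equals $\log|a|$ for some $a\in K^\times$, and by the structure of $K^\times$ modulo torsion (unit theorem plus factorization into prime ideals) the $\Q$-vector space spanned by $\{\log|a|:a\in K^\times\}$ in $\R$ is \emph{finite-dimensional}: its dimension is bounded by $1+r_1+r_2-1$ coming from the Archimedean absolute value together with a rank-$(r_1+r_2-1)$ contribution controlled by Dirichlet's unit theorem (the non-Archimedean places contribute nothing since $|\cdot|$ here is the fixed complex absolute value restricted to $K$, which is Archimedean). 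Thus $\{\chi_f(z):z\in\Per^*(f)\}$ spans a finite-dimensional $\Q$-subspace of $\R$.

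Combining this with Theorem \ref{thmmaindim} — which asserts that for a non-exceptional $f$ this span has \emph{infinite} dimension — forces $f$ to be exceptional, which is exactly the conclusion. So at the level of logic the theorem is an essentially formal corollary of Theorem \ref{thmmaindim}, provided one has correctly identified that the hypothesis \eqref{equassnorm} confines all characteristic exponents to a fixed finite-dimensional $\Q$-vector space. One subtlety I would be careful about: the exponent $n=n(z_0)$ is allowed to depend on $z_0$ and to be arbitrarily large, but this does not enlarge the dimension, because $\chi_f(z_0)=n^{-1}\log|\rho_f(z_0)|=n^{-1}\cdot n(z_0)^{-1}\cdot\big(n(z_0)\log|\rho_f(z_0)^{?}|\big)$ — more simply, $\chi_f(z_0)$ is a rational multiple of $\log|\rho_f(z_0)^{n}|=\log|a|$ with $a\in K^\times$, and rational multiples of elements of a $\Q$-vector space stay in it. So the $\Q$-span is still inside the finite-dimensional space $\langle\log|a|:a\in K^\times\rangle_{\Q}$.

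The main obstacle is not in this theorem itself but is imported: everything rests on Theorem \ref{thmmaindim}, whose proof is the genuinely hard part of the paper. If one instead wanted a self-contained argument not invoking Theorem \ref{thmmaindim}, the route would be: (i) use \eqref{equassnorm} and the finite-dimensionality above to show that the length spectrum of $f$ takes values in $\overline{\Q}$ (indeed in a number field up to roots), hence by Theorem \ref{thmalglenintro} — assuming $f$ is not flexible Latt\`es, a case one disposes of directly since flexible Latt\`es maps \emph{are} exceptional and so satisfy the conclusion — conclude $f$ is defined over $\overline{\Q}$; (ii) then run a height or equidistribution argument à la Ji--Xie / Huguin on the $\overline{\Q}$-model, exploiting that all $|\rho_f(z_0)^{n}|$ lie in a single number field $K$, to constrain the Lyapunov exponent $\sL_f$ and the Archimedean/non-Archimedean contributions to the local heights of periodic points, ultimately matching $\sL_f=\log(d)/2$ or the monomial normalization and invoking the characterization of exceptional maps via minimal Lyapunov exponent (Zdunik). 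But the cleaner and intended argument is the two-line reduction to Theorem \ref{thmmaindim}, and that is the plan I would write up.
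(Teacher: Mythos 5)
There is a fatal error in the key step. You claim that the $\Q$-vector space spanned by $\{\log\lvert a\rvert : a\in K^\times\}$ in $\R$ is finite-dimensional, of dimension at most $1+(r_1+r_2-1)$, by Dirichlet's unit theorem. This is false. Dirichlet's theorem controls the unit group $\sO_K^\times$, not $K^\times$: already for $K=\Q$ the numbers $\log 2,\log 3,\log 5,\dots$ all lie in your set, and they are $\Q$-linearly independent (a relation $\sum m_i\log p_i=0$ with $m_i\in\Z$ forces $\prod p_i^{m_i}=1$, hence all $m_i=0$ by unique factorization). So $\mathrm{span}_\Q\{\log\lvert a\rvert:a\in K^\times\}$ is infinite-dimensional for every number field $K$, the hypothesis \eqref{equassnorm} does \emph{not} confine the characteristic exponents to a finite-dimensional $\Q$-space, and the ``two-line reduction'' to Theorem \ref{thmmaindim} collapses. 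The non-archimedean places are exactly what you cannot ignore: $\log\lvert a\rvert$ at the archimedean place is constrained by the product formula in terms of the $v$-adic valuations of $a$, and those have unbounded rank as $a$ varies in $K^\times$.

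There is also a structural problem with the direction of your reduction. In this paper the implication genuinely goes the other way: the case of Theorem \ref{thmmaindim} where $f$ is defined over $\overline{\Q}$ (which covers all non-flexible-Latt\`es PCF maps) is \emph{deduced from} Theorem \ref{thmmainnumf} in \S\ref{section4}, by choosing a finite generating set $\chi_f(x_1),\dots,\chi_f(x_M)$ of the (assumed finite-dimensional) span and then enlarging $K$ to contain $\lvert\rho_f(x_i)\rvert$ — note that there $K$ is chosen \emph{after} the generating set, which is why that direction is sound while yours is not. Taking Theorem \ref{thmmaindim} as a black box to prove Theorem \ref{thmmainnumf} would therefore be circular for PCF maps. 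The actual proof of Theorem \ref{thmmainnumf} is the substantive one: it first uses Theorem \ref{thmalglenintro} to descend $f$ to $\overline{\Q}$, then studies $F=f\times\overline{f}$ on $\P^1\times\P^1$, invokes dynamical Manin--Mumford and the equidistribution theorem (Theorem \ref{equid}) applied to the Galois orbits of the points $(x_n,\overline{x_n})$ produced by Zdunik's theorem, and uses the hypothesis \eqref{equassnorm} only to guarantee that $\lvert\det(dF^{l})\rvert$ is Galois-invariant along those orbits — a far more delicate use of the number-field condition than a dimension count. Your fallback sketch (ii) gestures at this route but would need all of that machinery to be carried out.
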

\begin{Rem}\label{remcofinitethm1.5}
	With the help of Remark \ref{remcofinitethm1.2}, the proof of Theorem \ref{thmmainnumf} in \S \ref{section4} indeed shows that we can weaken the condition of Theorem \ref{thmmainnumf} by requiring only that there is a number field $K$ such that for all but finitely many $z_0\in\Per(f)$, we have $\lvert\rho_f(z_0)\rvert^{n(z_0)}\in K$ for some $n(z_0)\in\Z_{>0}$. 
\end{Rem}

Finitely many nonzero elements $z_1,\dots,z_N$ in a field $F$ are called multiplicatively independent if for every $N$-tuple $(m_1,\dots,m_N)$ of integers, $z_1^{m_1}\cdots z_N^{m_N}=1$ if and only if $m_1=\cdots=m_N=0$. A sequence $(z_n)_{n=1}^\infty$ in $F\setminus\{0\}$ is called multiplicatively independent if all its finite subsequences are multiplicatively independent. Theorem \ref{thmmaindim} immediately implies the existence of infinitely many multipliers for a non-exceptional $f$ whose absolute values are multiplicatively independent.

\begin{Cor}\label{cor1.6}
	Let $f:\P^1(\C)\to\P^1(\C)$ be a rational map of degree $d\geq2$. Suppose that $f$ is not exceptional. Then there exists a sequence $(x_j)_{j=1}^\infty$ in $\Per^*(f)$ such that the sequence $(\lvert\rho_f(x_j)\rvert)_{j=1}^\infty$ is multiplicatively independent in $\R$.
\end{Cor}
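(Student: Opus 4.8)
The plan is to reduce the statement to Theorem \ref{thmmaindim} via the elementary dictionary between multiplicative independence of lengths and $\Q$-linear independence of characteristic exponents.

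First I would record the key identity: for $z \in \Per^*(f)$ one has $\log|\rho_f(z)| = n(z)\,\chi_f(z)$ with $n(z) \in \Z_{>0}$. Consequently, for any finite family $x_1,\dots,x_k \in \Per^*(f)$ and integers $m_1,\dots,m_k$, the equation $\prod_{i=1}^{k}|\rho_f(x_i)|^{m_i}=1$ is equivalent to $\sum_{i=1}^{k} m_i\, n(x_i)\,\chi_f(x_i)=0$; since the $n(x_i)$ are positive integers, the tuple $(|\rho_f(x_1)|,\dots,|\rho_f(x_k)|)$ is multiplicatively independent in $\R$ if and only if $\chi_f(x_1),\dots,\chi_f(x_k)$ are linearly independent over $\Q$. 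Note this forces $\chi_f(x_i)\neq0$, i.e.\ $|\rho_f(x_i)|\notin\{0,1\}$, so the values are indeed nonzero, as the definition of multiplicative independence requires.

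Next I would invoke Theorem \ref{thmmaindim}: since $f$ is not exceptional, the $\Q$-vector space $V\subseteq\R$ generated by $\{\chi_f(z):z\in\Per^*(f)\}$ has infinite dimension. I would then extract the sequence greedily. Choose $x_1\in\Per^*(f)$ with $\chi_f(x_1)\neq0$ (possible since $V\neq\{0\}$). Inductively, if $x_1,\dots,x_k$ have been chosen with $\chi_f(x_1),\dots,\chi_f(x_k)$ linearly independent over $\Q$, then $W_k:=\sum_{i=1}^k\Q\,\chi_f(x_i)$ is a finite-dimensional, hence proper, subspace of $V$; as $V$ is spanned by the values $\chi_f(z)$, $z\in\Per^*(f)$, there is some $z\in\Per^*(f)$ with $\chi_f(z)\notin W_k$, and I set $x_{k+1}:=z$. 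By construction every finite subsequence of $(\chi_f(x_j))_{j\ge1}$ is $\Q$-linearly independent, so by the equivalence of the previous paragraph every finite subsequence of $(|\rho_f(x_j)|)_{j\ge1}$ is multiplicatively independent; that is, $(|\rho_f(x_j)|)_{j=1}^\infty$ is multiplicatively independent in $\R$.

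There is no real obstacle in this argument: all the substance lives in Theorem \ref{thmmaindim}, and the remainder is the standard observation that multiplicative independence of positive reals corresponds under the logarithm to $\Q$-linear independence, together with a greedy choice of a linearly independent sequence inside an infinite-dimensional space. The only point requiring a moment's care is checking that the extracted lengths are all $\neq1$, so that they are legitimately ``nonzero elements'' in the sense of the definition — which is automatic, since $\chi_f(x_j)\notin W_{j-1}$ in particular implies $\chi_f(x_j)\neq0$.
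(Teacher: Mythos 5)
Your proof is correct and takes essentially the same route as the paper, which states that Corollary \ref{cor1.6} follows immediately from Theorem \ref{thmmaindim}; the content is exactly your dictionary $\log\lvert\rho_f(x)\rvert=n(x)\,\chi_f(x)$ translating multiplicative independence of lengths into $\Q$-linear independence of characteristic exponents, combined with the greedy extraction of an infinite linearly independent family from an infinite-dimensional $\Q$-span. Your added care about the positive integer factors $n(x_j)$ and about ensuring the extracted lengths are $\neq 1$ is precisely the right bookkeeping.
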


\subsection{Motivations and previous results}
\subsubsection{Milnor's conjecture}
Milnor \cite{milnor2006lattes} showed that an exceptional rational map $f:\P^1(\C)\to\P^1(\C)$ of degree $d\geq2$ must have all its multipliers of periodic points in the ring of integers $\sO_K$ for some imaginary quadratic number field $K$, and in fact in $\Z$ when $f$ is not a rigid Latt\`es map. Milnor conjectured that the converse is also true. Milnor's conjecture was recently proved by Ji-Xie:
\begin{Thm}[{\cite[Theorem 1.13]{Ji2023}}]\label{thmjx}
Let $f:\P^1(\C)\to\P^1(\C)$ be a rational map of degree $d\geq2$. Assume that there exists an imaginary quadratic field $K$ such that all multipliers of $f$ belong to $\sO_K$. Then $f$ is exceptional.
\end{Thm}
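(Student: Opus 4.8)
The plan is to reduce to the algebraic setting and then combine two kinds of rigidity forced by the hypothesis: a Diophantine one, coming from the fact that $\sO_K$ is a discrete rank-$2$ subgroup of $\C$ whose nonzero elements are algebraic integers of degree at most $2$ over $\Q$, and a dynamical one, coming from the structure of repelling cycles and their linearizers.

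Flexible Latt\`es maps are exceptional, so we may assume $f$ is not flexible Latt\`es. Since every multiplier lies in $\sO_K\subseteq\overline{\Q}$, McMullen's rigidity theorem \cite[Corollary 2.3]{McMullen1987} together with the spread-out argument recalled after the statement of Theorem \ref{thmalglenintro} shows that $f$ is defined over $\overline{\Q}$; we fix a number field $L$ over which $f$ is defined and with $K\subseteq L$. Two consequences of the hypothesis are used throughout. At the fixed archimedean place, each multiplier $\rho$ satisfies $|\rho|^{2}=N_{K/\Q}(\rho)\in\Z_{\ge 0}$, so $\rho$ is either $0$, one of the finitely many roots of unity in $\sO_K$, or satisfies $|\rho|\ge\sqrt 2$; hence $f$ has no Cremer point, no Siegel disk, and no attracting cycle other than the superattracting ones, every indifferent cycle is parabolic with multiplier a root of unity, and one has the spectral gap $|\rho|\in\{0,1\}\cup[\sqrt 2,\infty)$. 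At every non-archimedean place $v$ of $L$, $|\rho|_{v}\le 1$ for all multipliers; combined with $\rho\in\sO_K$ this means each multiplier is an algebraic integer of degree at most $2$ over $\Q$, so the multiplier polynomials of $f$ are monic with coefficients in $\sO_L$ and factor over $\overline{\Q}$ into roots of degree at most $2$.

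The heart of the argument, and where I expect the real difficulty, is to upgrade these constraints to exceptionality. After replacing $f$ by an iterate---which preserves the hypothesis since $\sO_K$ is a ring, and for which the conclusion is equivalent---we fix a repelling fixed point $p$ with multiplier $\lambda\in\sO_K$, $|\lambda|>1$, together with its Koenigs--Poincar\'e linearizer, the meromorphic map $\psi\colon\C\to\P^1$ of finite order $\log d/\log|\lambda|$ normalized by $\psi(0)=p$, $\psi'(0)=1$, $\psi(\lambda z)=f(\psi(z))$. Each $z_0\in\psi^{-1}(p)\setminus\{0\}$ with $\psi'(z_0)\neq 0$ gives a transverse homoclinic orbit of $p$, and there are infinitely many such $z_0$; along one of them the horseshoe construction produces periodic points $q_k$ of period $\sim k$ whose multipliers satisfy, up to an explicit fixed factor, $\rho(q_k)=\lambda^{k}\bigl(\psi'(z_0)+O(|\lambda|^{-k})\bigr)$. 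Now the hypothesis bites: $\lambda\,\rho(q_k)-\rho(q_{k+1})$ lies in $\sO_K$ and converges, hence is eventually constant because $\sO_K$ is discrete in $\C$; telescoping the resulting eventual linear recursion identifies $\lim_k\rho(q_k)/\lambda^{k}$, and therefore $\psi'(z_0)$, with an element of $K$. Running the same argument over all such $z_0$ and over heteroclinic orbits from $p$ to other repelling cycles shows that $\psi$ and $\psi'$ take values of degree at most $2$ over $\Q$, and of height $O(\log|z_0|)$, along the set $\psi^{-1}(E)$, where $E$ is a dense countable subset of $J(f)$ and the counting function of $\psi^{-1}(E)$ grows at least like $|z|^{\log d/\log|\lambda|}$. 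A transcendence argument in the spirit of Schneider--Lang, adapted to the functional equation $\psi(\lambda z)=f(\psi(z))$ and the finite order of $\psi$, then forces $\psi$ to be a linear image of $\exp$, of $\cos$, or of a Weierstrass $\wp$-function; equivalently, $f$ is semiconjugate to some $z\mapsto z^{\pm m}$, to a Chebyshev map, or to multiplication by $m$ on an elliptic curve.

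In each case $f$ is exceptional, which proves the theorem. (The last step can also be phrased through the Lyapunov exponent: the rigidity above forces $\sL_f$ down to its minimal value $\log d/2$, so $f$ is Latt\`es by Zdunik's theorem \cite{Zdunik90}, except when the linearizers are of exponential or trigonometric type, which are exactly the monomial and Chebyshev maps.) I expect the two main obstacles to be: producing the horseshoe family $(q_k)$ together with a multiplier asymptotic sharp enough that discreteness of $\sO_K$ yields the eventual recursion; and the transcendence dichotomy for linearizers constrained by only a functional---not differential---equation over $\overline{\Q}$. These form the technical core; the remaining inputs, namely the spread-out and equidistribution bookkeeping and the classification of rational maps whose linearizers are classical special functions, are standard.
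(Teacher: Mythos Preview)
This theorem is not proved in the present paper; it is quoted from \cite[Theorem 1.13]{Ji2023} as motivation. What the paper does prove is the strictly stronger Theorem~\ref{thmmainnumf} (only $|\rho_f(z_0)|^{n(z_0)}\in K$ for some number field $K$ is assumed), and the proof in \S\ref{section4} follows an entirely different route from yours: assuming $f$ non-exceptional, one reduces to $f$ defined over $\overline{\Q}$ via Theorem~\ref{thmalglenintro}, invokes Zdunik's result to find periodic points $x_n$ with $\chi_f(x_n)\to a>\sL_f$, passes to the product system $F=f\times\overline{f}$ on $\P^1\times\P^1$ and the Zariski closure $\Gamma$ of the diagonal points $(x_n,\overline{x_n})$, uses the dynamical Manin--Mumford theorem of \cite{Ghioca2018c} to make $\Gamma$ invariant, and then applies Yuan's arithmetic equidistribution (Theorem~\ref{equid}) to force $2a\le\int\log|\det dF|\,d\mu=2\sL_f$, a contradiction. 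No linearizers, homoclinic orbits, or transcendence theory enter.

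Your outline is closer in spirit to the original argument in \cite{Ji2023}, which does exploit Poincar\'e linearizers and the discreteness of $\sO_K$; the reduction to $\overline{\Q}$, the spectral gap $|\rho|\in\{0,1\}\cup[\sqrt 2,\infty)$, and the horseshoe multiplier asymptotic $\rho(q_k)\sim\lambda^k\psi'(z_0)$ are all legitimate ingredients there. But your endgame has a genuine gap. The step ``a transcendence argument in the spirit of Schneider--Lang \ldots\ forces $\psi$ to be a linear image of $\exp$, $\cos$, or $\wp$'' is not a known theorem and is not how \cite{Ji2023} (or the alternative \cite{Buff2022}) concludes. Schneider--Lang requires the functions in play to satisfy an algebraic \emph{differential} system over a number field; the functional equation $\psi(\lambda z)=f(\psi(z))$ is not of that shape, and knowing that $\psi,\psi'$ take algebraic values of bounded degree on a set of polynomial growth is far from enough to pin $\psi$ down among finite-order meromorphic functions. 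You correctly flag this as one of the ``two main obstacles,'' but as written it is not a technicality to be filled in later --- it is the whole difficulty, and the existing proofs avoid it by a structural (orbifold/semiconjugacy) analysis rather than a transcendence dichotomy. Absent a concrete substitute for that step, the proposal does not constitute a proof.
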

See also \cite{Buff2022} for a different proof. Recently, Huguin generalized the above result using a different approach:
\begin{Thm}[{\cite[Theorem 7]{Huguin2023}}]\label{thmh}
Let $f:\P^1(\C)\to\P^1(\C)$ be a rational map of degree $d\geq2$. Assume that there exists a number field $K$ such that all multipliers of $f$ belong to $K$. Then $f$ is exceptional.
\end{Thm}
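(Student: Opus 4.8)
The plan is to deduce Theorem \ref{thmh} directly from Theorem \ref{thmmainnumf}: once the latter is available (its proof occupies \S\ref{section4}), no further dynamical input is required, only a trivial arithmetic observation. Recall that $|w|^2 = w\overline{w}$ for $w\in\C$, where the bar denotes complex conjugation. Since complex conjugation restricts to an automorphism of $\overline{\Q}$, the set $\overline{K}:=\{\overline{\alpha}:\alpha\in K\}$ is again a number field, and the hypothesis $\rho_f(z_0)\in K$ for all $z_0\in\Per(f)$ gives $\overline{\rho_f(z_0)}\in\overline{K}$ as well. Hence
$$|\rho_f(z_0)|^2=\rho_f(z_0)\,\overline{\rho_f(z_0)}\in K\overline{K}=:K',$$
and $K'$ is a number field independent of $z_0$ (the compositum inside $\overline{\Q}$ of the two number fields $K$ and $\overline{K}$). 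When $\rho_f(z_0)=0$ this is trivially $0\in K'$.

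Therefore condition \eqref{equassnorm} of Theorem \ref{thmmainnumf} is satisfied with the number field $K'$ and with $n(z_0)=2$ for every $z_0\in\Per(f)$. Applying Theorem \ref{thmmainnumf} yields that $f$ is exceptional, which is exactly the conclusion of Theorem \ref{thmh}.

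There is essentially no obstacle in this argument itself: the only things to verify are that $K\overline{K}$ is a number field — which is immediate, being a compositum of two number fields inside $\overline{\Q}$ — and that parabolic or super-attracting cycles (where $\rho_f(z_0)=0$) cause no trouble, which they do not. All the substance is contained in Theorem \ref{thmmainnumf} (equivalently in Theorem \ref{thmmaindim}). We note that Huguin's original proof of Theorem \ref{thmh} did not go through Theorem \ref{thmmainnumf}: it instead built on the integer-multiplier case, Theorem \ref{thmjx}, together with further arithmetic arguments controlling the multipliers place by place and using equidistribution of periodic points. The deduction above shows that, granting Theorem \ref{thmmainnumf}, Theorem \ref{thmh} becomes an immediate corollary — which is the precise sense in which our results generalize Huguin's.
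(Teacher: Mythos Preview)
Your deduction is correct and matches the paper's intent. The paper does not give its own proof of Theorem~\ref{thmh}: it is quoted as Huguin's result, and immediately afterwards the paper simply asserts that ``since our assumption (\ref{equassnorm}) in Theorem~\ref{thmmainnumf} is weaker than that of Theorem~\ref{thmh}, Theorem~\ref{thmmainnumf} is a generalization of Theorem~\ref{thmh}.'' Your argument via $|\rho_f(z_0)|^2=\rho_f(z_0)\overline{\rho_f(z_0)}\in K\overline{K}$ is exactly the way to make that sentence precise, and it is correct.

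One terminological slip: you write ``parabolic or super-attracting cycles (where $\rho_f(z_0)=0$)''. Parabolic cycles have multiplier a root of unity, not zero; only super-attracting cycles have $\rho_f(z_0)=0$. This does not affect the argument.
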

Since our assumption \eqref{equassnorm} in Theorem \ref{thmmainnumf} is weaker than that of Theorem \ref{thmh}, Theorem \ref{thmmainnumf} is a generalization of Theorem \ref{thmh}. Note that our assumption \eqref{equassnorm} is even weaker than the condition that there is a number field $K$ such that
\begin{equation}
	\forall z_0\in\Per(f),\ \exists
	n\in\Z_{>0},\ (\rho_f(z_0))^n\in K.
\end{equation}

\subsubsection{A question of Levy and Tucker}
On the other hand, at the 2014 AIM workshop \emph{Postcritically Finite Maps In Complex And Arithmetic Dynamics}, Levy \cite{Levy2014} and Tucker \cite{Tucker2014} independently asked the following question: 
\begin{Que}
Let $f:\P^1(\C)\to\P^1(\C)$ be a non-exceptional rational map of degree $d\geq2$, and let $S$ be the set of all multipliers of periodic points of $f$. Take the subgroup of $\C^\ast$ generated by $S\setminus\{0\}$. Is it true that the rank of this group is infinite? 
\end{Que}

It is not hard to see that our Corollary \ref{cor1.6} gives a positive answer to (a generalized version of) Levy and Tucker's question. 

\subsection{Sketch of the proofs}
We have already given the idea of Theorem \ref{thmalglenintro}. Here we explain the proofs of Theorem \ref{thmmaindim} and Theorem \ref{thmmainnumf}.

\medskip

We first give the idea of the proof of Theorem \ref{thmmainnumf}. We argue by contradiction and suppose that $f$ is not exceptional. The first step is to reduce to the case where $f$ is defined over $\overline{\Q}$, which can be done using our Theorem \ref{thmalglenintro}. After enlarging $K$, we may assume that both $f$ and $\overline{f}$ are defined over $K$, where $\overline{f}$ is the rational map obtained from $f$ by replacing the coefficients with their complex conjugates. In the second step, we combine an arithmetic equidistribution theorem with a result of Zdunik \cite{zdunik2014characteristic} (see \cite{Huguin2023}) on the Lyapunov exponent to get a contradiction. This argument is inspired by Huguin's proof of Theorem \ref{thmh}. Contrary to the case of Theorem \ref{thmh}, we cannot apply the equidistribution theorem to the one-dimensional dynamical system $f:\P^1\to\P^1$ directly. Our idea is to consider the two-dimensional system $F:=f\times\overline{f}$ on $\P^1\times\P^1$ instead. More precisely, applying a result of Zdunik \cite{zdunik2014characteristic}, we get a sequence $(x_n)_{n=1}^\infty$ of distinct periodic points of $f$ such that
$$\lim\limits_{n\to+\infty}\chi_f(x_n)=a>\sL_f.$$
Consider the endomorphism $F:=f\times\overline{f}$ on $\P^1\times\P^1$ and $\Gamma:=\overline{\{p_n=(x_n,\overline{x_n})\}}^\Zar\subseteq\P^1\times\P^1$. Without loss of generality, we may assume that $\Gamma$ is irreducible and that the sequence $(p_n)_{n=1}^\infty$ is generic in $\Gamma$. Since the Dynamical Manin-Mumford problem is solved for such an endomorphism $F$ by \cite{Ghioca2018c} and $\Gamma$ contains a Zariski-dense subset $\{p_n:n\geq1\}$ of $F$-periodic points, the subscheme $\Gamma$ itself is $F$-periodic. Hence, after replacing $F$ by a suitable iteration, we may assume that $\Gamma$ is $F$-invariant.

Let $\nu_n$ be the discrete probability measure equally supported at the union of Galois orbits of iterates of $p_n$ under $F$. Then $\nu_n$ converges weakly to the canonical measure $\mu$ on $\Gamma$ with respect to $F$ by an equidistribution-type theorem (Theorem \ref{equid}), which is a reformulation of \cite[Theorem 3.1]{Yuan2008}; see \S \ref{sectionequid} for details. Applying $\nu_n\to\mu$ to the continuous test functions $\max\{\log\lvert\det(dF)\rvert,A\}$ ($A\in\R$) and letting $A\to-\infty$, we get
$$2a\leq\int\log\lvert\det(dF)\rvert d\mu,$$
which is impossible since the right-hand side equals $2\sL_f<2a$ by a direct computation. 

\medskip

Next, we sketch the proof of Theorem \ref{thmmaindim}. According to \cite{Douady1993}, postcritically finite (PCF) maps are defined over $\overline{\Q}$ in the moduli space $\sM_d$ of rational maps of degree $d$, except for the family of flexible Latt\`es maps. So it suffices to consider the following two cases: 1). $f$ is defined over $\overline{\Q}$, and 2). $f$ is not PCF. For the first case, the conclusion follows from Theorem \ref{thmmainnumf}. For the second case, we need to develop some new techniques, which are presented in \S \ref{sectionlinear}. In \S \ref{sectionlinear}, we consider some pseudo-linear algebra (meaning that the domain of a map may not be the whole vector space) and the vector space $\D(k)_\Q=k^*\otimes_{\Z}\Q$ for a field $k$ of characteristic zero. We will actually prove a theorem (Theorem \ref{thminvo}) stronger than the non-PCF case of Theorem \ref{thmmaindim}; see \S \ref{sectionlinear} and \S \ref{section6} for details. To prove Theorem \ref{thminvo}, in \S \ref{section6.1} we first deal with the case that $f$ is defined over $\overline{\Q}$. A key ingredient in this step is \cite[Lemma 4.1]{Benedetto2012}, which is a consequence of Siegel's theorem on $S$-integral points. The existence of a non-preperiodic critical point of $f$ is essentially used here. In \S \ref{section6.2}, we consider the general case and complete the proof. This is achieved by reducing to the case that $f$ is defined over $\overline{\Q}$ via an algebraic-geometric argument and the techniques developed in \S \ref{sectionlinear}.

\subsection{Applications}
\subsubsection{The Zariski-dense orbit conjecture}
By applying Corollary \ref{cor1.6}, we can give a new proof of a special case of the Zariski-dense orbit conjecture.
\begin{ZDO}[=ZDO]
Let $k$ be an algebraically closed field of characteristic $0$. Given an irreducible quasi-projective variety $X$ over $k$ and a dominant rational self-map $f$ on $X$. If $\{g\in k(X):g\circ f=g\}=k$, where $k(X)$ is the function field of $X$, then there exists $x\in X(k)$ whose forward orbit under $f$ is well-defined and Zariski-dense in $X$.
\end{ZDO}

\begin{Rem}
The converse of ZDO holds and is easy to prove. For some progress on ZDO, see for example \cite{Amerik2011,Amerik2008,Medvdev,Xie2017,Xie2022}.
\end{Rem}
As an application of Corollary \ref{cor1.6}, we give a new proof of (the most difficult part of) a special case of ZDO, which was first proved by Xie \cite[Theorem 1.16]{Xie2022}. 
\begin{Thm}\label{thm1.8}
Let $X=\P^1_k\times\cdots\times\P^1_k$ be the product of $N$ copies of the projective line over an algebraically closed field $k$ of characteristic $0$. Suppose that $f:X\to X$ is an endomorphism of the form $f_1\times\cdots\times f_N$, where $f_j:\P^1_k\to\P^1_k$ is a non-constant rational map for $1\leq j\leq N$. Then the ZDO holds for $(X,f)$.
\end{Thm}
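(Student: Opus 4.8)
The plan is to induct on $N$, using Corollary \ref{cor1.6} as the new input that replaces the delicate arithmetic arguments of \cite{Xie2022}. First I would reduce to the case where $k=\overline{\Q}$ (or a number field): the map $f=f_1\times\cdots\times f_N$ and the variety $X$ are defined over a finitely generated field, and by a standard specialization argument (choosing a suitable place of this field, as in \cite{Xie2022}) it suffices to prove the statement there; alternatively one may work directly over $k$ since the combinatorial structure below does not depend on the base field. The base case $N=1$ is the ZDO for a single rational map $f_1:\P^1\to\P^1$: if $f_1$ is constant the statement is trivial, and if $\deg f_1\geq 1$ one either has that $f_1$ has a point with infinite Zariski-dense (i.e.\ infinite) orbit, which holds unless $f_1$ is an automorphism of finite order — but in that case the invariant-function hypothesis $\{g:g\circ f_1=g\}=k$ fails. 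So the base case is clear.

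For the inductive step, the key point is to understand the $f$-invariant subvarieties of $X=(\P^1)^N$. After replacing $f$ by an iterate, the structure of periodic subvarieties of a split polynomial-type map on $(\P^1)^N$ (Medvedev--Scanlon \cite{Medvdev}, and its extension to rational maps) says that any $f$-invariant irreducible subvariety is, up to coordinate permutation, a product of ``graph-type'' pieces: correspondences between coordinates $i$ and $j$ forced by a relation $h\circ f_i = f_j\circ h$ for some $h:\P^1\to\P^1$, together with free coordinates. The hypothesis $\{g\in k(X):g\circ f=g\}=k$ forbids $f$ from being exceptional in a strong sense; concretely, if some $f_j$ were exceptional (Lattès or monomial-type) one would produce a nonconstant invariant rational function on that factor, contradicting the hypothesis. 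Hence every $f_j$ is non-exceptional, and Corollary \ref{cor1.6} applies to each: for each $j$ we obtain a sequence of periodic points $x^{(j)}_m\in\Per^*(f_j)$ whose multiplier absolute values $|\rho_{f_j}(x^{(j)}_m)|$ are multiplicatively independent. The role of this is to rule out ``hidden'' algebraic relations between distinct coordinates: if coordinates $i$ and $j$ were linked by $h\circ f_i=f_j\circ h$, then $h$ would carry periodic points of $f_i$ to periodic points of $f_j$ matching their multipliers (up to the local degree of $h$, which only contributes a bounded multiplicative factor on a Zariski-open set), forcing a multiplicative dependence incompatible with the independence furnished by Corollary \ref{cor1.6}. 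Therefore the coordinates are ``dynamically independent'': no two factors are linked, so the only $f$-periodic subvarieties of codimension $\geq 1$ that could obstruct a Zariski-dense orbit are the coordinate hyperplanes $\{x_i = p\}$ for $f_i$-periodic $p$, and products thereof.

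With this in hand, the orbit is built as in the classical approach: choose $x=(x_1,\dots,x_N)$ where each $x_i\in\P^1(k)$ has infinite forward $f_i$-orbit (possible since $f_i$ is non-exceptional, in particular not of finite order), and such that the orbit of $x$ avoids the finitely many ``linked'' proper invariant subvarieties — but we have just argued there are essentially none beyond products of single-coordinate periodic fibers, which are easy to dodge by choosing each $x_i$ non-preperiodic. Then $\overline{\{f^m(x)\}}^{\Zar}$ is an $f$-invariant subvariety that projects dominantly (indeed surjectively up to closure) onto every coordinate and has no linking relations, so by the Medvedev--Scanlon-type classification it must be all of $X$. The main obstacle I expect is making the ``no linking'' step fully rigorous: one must control the local degree of the semiconjugating map $h$ on periodic points and show that the resulting bounded multiplicative ambiguity genuinely contradicts multiplicative independence of an \emph{infinite} sequence of multipliers — this is exactly where Corollary \ref{cor1.6} (as opposed to a mere ``rank $\geq 2$'' statement) is essential, and where one must be careful that the periodic points supplied by the corollary are not swallowed by the finitely many exceptional fibers of $h$. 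Once the classification of invariant subvarieties and the linking dichotomy are pinned down, assembling the Zariski-dense orbit is routine.
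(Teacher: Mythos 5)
Your proposal diverges from the paper's argument in a way that introduces two genuine gaps, both in the step you yourself flag as the main obstacle.

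First, the claim that the hypothesis $\{g\in k(X):g\circ f=g\}=k$ forces every $f_j$ to be non-exceptional is false. A single exceptional map of degree $\geq 2$ (say $z\mapsto z^2$) admits no nonconstant invariant rational function on $\P^1$, so exceptional factors are perfectly compatible with the ZDO hypothesis; the theorem genuinely has to cover, e.g., $(z\mapsto z^2)\times(z\mapsto z^3)$. The paper accordingly splits into cases: all factors non-exceptional, all factors exceptional (deferred to \cite[\S 9.3]{Xie2022}), and the mixed case by induction on $l(f)=\min\{s,N-s\}$. Your reduction collapses these cases illegitimately. Second, and more seriously, Corollary \ref{cor1.6} cannot rule out ``linking'' between coordinates. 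The corollary asserts multiplicative independence of lengths of infinitely many periodic points of a \emph{single} map; it says nothing against a semiconjugacy $h\circ f_i=f_j\circ h$ between two factors. The trivial example $f_i=f_j$, $h=\mathrm{id}$ is a linking with exactly matching multipliers and contradicts nothing. Linked factors are consistent with the ZDO hypothesis, and when they occur there are typically infinitely many invariant curves (graphs of maps commuting with the $f_j$'s), so the concluding step ``the only invariant subvarieties are products of coordinate periodic fibers, easy to dodge'' is unjustified — avoiding all invariant subvarieties simultaneously is precisely the hard part that \cite{Xie2022} handled with the adelic topology and the Medvedev--Scanlon-type classification.

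The paper's route is different and bypasses invariant subvarieties entirely in the main case: for each non-exceptional $f_j$, Theorem \ref{thmmaindim} (via Corollary \ref{cor1.6}) lets one choose periodic points $x_j\in\Per^*(f_j)$ so that $\rho_{f_1}(x_1),\dots,\rho_{f_N}(x_N)$ are \emph{jointly} multiplicatively independent; after passing to an iterate, $x=(x_1,\dots,x_N)$ is a fixed point of $f$ whose differential has nonzero, multiplicatively independent eigenvalues, and the existence of a point with Zariski-dense orbit then follows directly from \cite{Amerik2011}. That fixed-point criterion is the essential external input missing from your proposal; without it (or a complete substitute for the classification-plus-avoidance argument of \cite{Xie2022}), the proof does not close.
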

\begin{Rem}
We note that every dominant endomorphism $f:(\P^1)^N\to(\P^1)^N$ over an algebraically closed field $k$ of characteristic zero must have the form $f_1\times\cdots\times f_N$, after replacing $f$ by a suitable iterate. 
\end{Rem}

The original proof of Theorem \ref{thm1.8} in \cite{Xie2022} relies on the solution of the (adelic) Zariski-dense orbit conjecture for smooth projective surfaces \cite[Theorem 1.15]{Xie2022}, the notion of adelic topology introduced in \cite[\S 3]{Xie2022}, and a classification result \cite[Proposition 9.2]{Xie2022} on invariant subvarieties of $f:(\P^1)^N\to(\P^1)^N$ (see also \cite{Medvdev,Ghioca2018c}). When $N=2$, Pakovich gave another proof \cite{Pakovich2023} of Theorem \ref{thm1.8} using his classification of invariant curves in $\P^1\times\P^1$ and a height argument. We give a new proof in the case where all the $f_i$'s are non-exceptional of degree at least $2$; this proof does not require the ingredients mentioned above. Other cases of Theorem \ref{thm1.8} are relatively easy or can be reduced to the case where all $f_i$ are non-exceptional, as shown in \cite[\S 9.3]{Xie2022}. 

\subsubsection{A characterization of PCF maps}
We also show that one can decide whether a rational map $f:\P^1(\C)\to\P^1(\C)$ of degree $d\geq2$ is PCF with the information of its multiplier spectrum or length spectrum on periodic points.

\begin{Thm}\label{pcfdec}
Let $f:\P^1(\C)\to\P^1(\C)$ be a rational map of degree $d\geq2$. Then the following statements are equivalent:

(1) $f$ is PCF;

(2) $\rho_f(x)\in\overline{\Q}$ for all $x\in\Per(f)(\C)$ and the $\Q$-subspace $V=V(f)$ of $\R$ is of finite dimension, where $V$ is generated over $\Q$ by $$\{\log\lvert N_{K_x/\Q}(\rho_f(x))\rvert:x\in\Per^*(f)(\C)\};$$

(3) $\lvert\rho_f(x)\rvert\in\overline{\Q}$ for all $x\in\Per(f)(\C)$ and the $\Q$-subspace $W=W(f)$ of $\R$ is of finite dimension, where $W$ is generated over $\Q$ by $$\{\log\lvert N_{L_x/\Q}(\lvert\rho_f(x)\rvert)\rvert:x\in\Per^*(f)(\C)\}.$$

Here $K_x$ (resp. $L_x$) is any number field containing $\rho_f(x)$ (resp. $\lvert\rho_f(x)\rvert$), and $N_{K_x/\Q}$ (resp. $N_{L_x/\Q}$) is the norm map for the extension $K_x/\Q$ (resp. $L_x/\Q$), i.e., the determinant of the $\Q$-linear transformation induced by multiplication by $\rho_f(x)$ (resp. $|\rho_f(x)|$).

Clearly, the subspaces $V$ and $W$ above are independent of the choices of the fields $K_x$ and $L_x$, respectively. We also remark that the $\Q$-vector spaces $V$ in (2) and $W$ in (3) are the same. 
\end{Thm}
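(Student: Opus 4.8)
The plan is to reduce the equivalence to the non‑exceptional case and there to play off the dichotomy ``defined over $\overline{\Q}$ versus not PCF''. Every exceptional map is PCF, so $(1)$ holds for it; and by Milnor's description of the multipliers of exceptional maps (\cite{milnor2006lattes}) these are, up to roots of unity, powers of a single fixed element (the integer $n$ for a flexible Latt\`es or monomial map, the CM multiplier for a rigid Latt\`es map), so the numbers $N_{K_x/\Q}(\rho_f(x))$ are up to sign powers of one fixed rational integer and $V$, $W$ are at most one–dimensional; thus $(2)$ and $(3)$ hold as well, and we may assume $f$ is not exceptional, in particular not flexible Latt\`es. Now if either $(2)$ or $(3)$ holds then $|\rho_f(x)|\in\overline{\Q}$ for all $x\in\Per(f)(\C)$, so by Theorem \ref{thmalglenintro} $f$ is defined over a number field; and if $(1)$ holds then by \cite{Douady1993} the non‑exceptional PCF map $f$ is $\PGL_2(\C)$–conjugate to one defined over a number field, and $V$, $W$, the periods and PCF-ness are conjugacy invariants. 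So in every case we may and will assume $f$ is defined over a number field $F$.

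\emph{$(1)\Rightarrow(2)$ and $(1)\Rightarrow(3)$.} When $f$ is PCF its $2d-2$ critical points are preperiodic, hence — like all periodic points — have canonical height $0$; using $h=\hat h_f+O_f(1)$ and elementary height inequalities, the finite part of the arithmetic distance between a periodic point and a periodic critical point of $f$ is bounded by a constant $C(f)$ depending only on $f$. From this I would deduce that there is a finite set $S$ of rational primes, depending only on $f$ (primes of bad reduction, primes where the critical scheme or the critical orbit fails to reduce faithfully, and primes of residue characteristic $\le e^{C(f)}$), such that $|\rho_f(x)|_v=1$ for every $x\in\Per^{*}(f)$ and every place $v$ over a prime outside $S$: if instead $|\rho_f(x)|_v<1$ at a place of good reduction, then $\widetilde{\rho_f(x)}=0$, so the reduced cycle of $x$ meets the critical locus of $\tilde f$, which forces (away from the finite bad set) that cycle to be the reduction of a superattracting cycle through a periodic critical point $c$, whence $f^{j}(x)\equiv c\pmod v$ for some $j$ and $v$ has residue characteristic $\le e^{C(f)}$ — a contradiction. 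Hence each $\rho_f(x)$ is an $S$‑unit, $N_{K_x/\Q}(\rho_f(x))=\pm\prod_{p\in S}p^{a_p}$ with $a_p\in\Z$, and $V\subseteq\bigoplus_{p\in S}\Q\log p$ is finite–dimensional. Running the same argument for $|\rho_f(x)|^{2}=\rho_f(x)\,\overline{\rho_f(x)}$, using that $\overline f$ is again PCF over a number field, gives $W$ finite–dimensional; and since $|\rho|^{2}=\rho\bar\rho$ with $\bar\rho$ a $\Q$‑conjugate of $\rho$, a short norm computation makes $\log|N_{L_x/\Q}(|\rho_f(x)|)|$ a positive rational multiple of $\log|N_{K_x/\Q}(\rho_f(x))|$, so $V=W$.

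\emph{$(2)\Rightarrow(1)$ (and $(3)\Rightarrow(1)$).} Assume $(2)$; then $f$ is defined over a number field, and I argue by contradiction, supposing $f$ is not PCF. Being non‑exceptional, $f$ then has a critical point with infinite forward orbit, which is exactly the hypothesis of Theorem \ref{thminvo}. That theorem — whose proof uses \cite[Lemma 4.1]{Benedetto2012} (a consequence of Siegel's theorem on $S$‑integral points) together with the pseudo‑linear algebra of $\D(k)_\Q=k^{*}\otimes_{\Z}\Q$ from \S\ref{sectionlinear} — shows that the multiplier data of periodic points of $f$ span an infinite–dimensional $\Q$‑space in a form strong enough that $V$, and hence $W=V$, is infinite–dimensional, contradicting $(2)$. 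Thus $f$ is PCF. The implication $(3)\Rightarrow(1)$ follows in the same way, either via $W=V$ or by running the argument with $|\rho_f|$ in place of $\rho_f$.

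The main obstacle is the last step: the reductions (Milnor, \cite{Douady1993}, Theorem \ref{thmalglenintro}) and the arithmetic in the PCF case are routine, whereas ``not PCF $\Rightarrow$ $V$ infinite–dimensional'', i.e. Theorem \ref{thminvo}, is the substantial new input, and it is there that Siegel's theorem and the essential use of a wandering critical point enter; a minor secondary point is the bookkeeping that isolates the finite set $S$ and identifies $V$ with $W$.
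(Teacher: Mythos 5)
Your skeleton is the same as the paper's (reduce via \cite{Douady1993} and Theorem \ref{thmalglenintro} to a map over a number field; for $(1)\Rightarrow(2),(3)$ show the multipliers are $S$-units for a finite $S$ so that $V\subseteq\bigoplus_{p\in S}\Q\log p$; for the converse invoke the non-PCF machinery of \S\ref{section6.1}), but two steps contain genuine gaps. The first is in $(1)\Rightarrow(2)$: the height argument you propose for isolating $S$ does not work. From $h=\hat h_f+O_f(1)$ you only control the \emph{normalized} (degree-averaged) arithmetic distance between $f^j(x)$ and a periodic critical point $c$; a single place $w\mid p$ with $f^j(x)\equiv c\pmod w$ contributes roughly $\log p/[\Q(x):\Q]$ to that average, so the bound you extract is $p\le e^{C(f)\,[\Q(x):\Q]}$, which grows with $x$ and cannot produce a finite $S$ independent of $x$ (in general two distinct points of canonical height zero can be congruent modulo arbitrarily large primes). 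The statement you want is true, but the correct tool is local rather than global: at a place $v$ of good reduction the reduced cycle through $\tilde c$ is superattracting, so $(f_v^{n})'$ there equals $0\neq 1$ and Hensel's lemma lifts that cycle to a \emph{unique} cycle of $f$, necessarily the critical cycle of $c$ itself; hence no $x\in\Per^*(f)$ can reduce into it. This is precisely \cite[Corollary 2.23 (a)]{Silverman2007} combined with the equivalence (i)--(iii) of \S\ref{section6.1} (after discarding the finitely many $v$ at which a strictly preperiodic critical orbit collapses to a periodic one), and it is what the paper uses.

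The second gap is in $(2)\Rightarrow(1)$: Theorem \ref{thminvo} cannot simply be cited. It yields an infinite-dimensional span inside $\D(\mathbf{k}^{\tau})_{\Q}$ (or $\D(\overline{\Q})_{\Q}$ for $\tau=\mathrm{id}$), whereas $V$ is the span of the further images $\mathbf{n}_{\Q}(\rog(\rho_f(x)))$ inside $\D(\Q)_{\Q}$, and a linear image of an infinite-dimensional space may well be finite-dimensional. One must re-run the construction of Lemma \ref{lem5.2} at the level of $\Q$: choose the places $v_i$ of $K$ so that $\phi_{\sigma(v_j)}(\mathbf{n}_K(\la(x_i)))=0$ for every $\sigma\in\Gal(K/\Q)$ and $i\neq j$, and then sum over the Galois orbit of each place to obtain an upper triangle system $((\mathbf{n}_{\Q}(\la(x_i)))_i,(\phi_{p_i})_i)$ for $\D(\Q)_{\Q}$, where $p_i$ is the rational prime under $v_i$. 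This extra Galois-invariance requirement is exactly what is hidden in your phrase ``in a form strong enough,'' and it is supplied in the paper by conditions \eqref{7.15}--\eqref{7.16}. The remaining parts of your outline (the exceptional-case reduction, which is in fact superfluous since non-PCF already implies non-exceptional, and the identification $V=W$ via $|\rho|^2=\rho\bar\rho$ and Galois invariance of the normalized norm) are fine.
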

\begin{Rem}
	Similarly, the actual proof shows that we can weaken conditions (2) and (3) in Theorem \ref{pcfdec}, in the spirit of Remarks \ref{remcofinitethm1.2} and \ref{remcofinitethm1.5}. Precisely, we can add two more equivalent statements to Theorem \ref{pcfdec}:
	
	$(2)^*$ $\rho_f(x)\in\overline{\Q}$ for all but finitely many $x\in\Per(f)(\C)$ and the $\Q$-subspace $V^*=V^*(f)$ of $\R$ is of finite dimension, where $V^*$ is generated over $\Q$ by $$\{\log\lvert N_{K_x/\Q}(\rho_f(x))\rvert:x\in\Per^*(f)(\C),\rho_f(x)\in\overline{\Q}\};$$
	
	$(3)^*$ $\lvert\rho_f(x)\rvert\in\overline{\Q}$ for all but finitely many $x\in\Per(f)(\C)$ and the $\Q$-subspace $W^*=W^*(f)$ of $\R$ is of finite dimension, where $W^*$ is generated over $\Q$ by $$\{\log\lvert N_{L_x/\Q}(\lvert\rho_f(x)\rvert)\rvert:x\in\Per^*(f)(\C),\lvert\rho_f(x)\rvert\in\overline{\Q}\}.$$
\end{Rem}
The proofs of Theorems \ref{thm1.8} and \ref{pcfdec} will be given in \S \ref{section6}.

\section{Rational maps with algebraic lengths}
Let $K$ be an algebraically closed subfield of $\C$ which is invariant under complex conjugation $\tau$, i.e., $\tau(K)=K$. The aim of this section is to prove the following theorem.

\begin{Thm}\label{thmalglen}
Let $f:\P_\C^1\to\P_\C^1$ be a rational map of degree $d\geq 2$. Assume that $f$ is not a flexible Latt\`es map and that for every $x\in\Per(f)(\C)$, $\lvert\rho_f(x)\rvert\in K$. Then $f$ is defined over $K$.
\end{Thm}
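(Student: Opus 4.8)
The plan is to run the spread-out argument of McMullen's rigidity, but carried out on $\sM_d(\C)$ viewed as a \emph{real} algebraic variety, so that the (only semialgebraic) length spectrum map can be used. First I would recall that the conjugacy class $[f]\in\sM_d(\C)$ determines $f$ up to a finite ambiguity together with coordinate changes by $\PGL_2$, so it suffices to show $[f]$ is a $K$-point of $\sM_d$ (after an extra argument to descend from the conjugacy class to the map itself, using that $K$ is algebraically closed). The key observation is that the length spectrum morphism, while not complex-algebraic, \emph{is} real-algebraic if we incorporate the complex conjugate: the pair of multiplier data for $f$ and $\overline f$ is algebraic over $\R$, and $|\rho_f(x)|^2 = \rho_f(x)\cdot\overline{\rho_f(x)} = \rho_f(x)\cdot\rho_{\overline f}(\overline x)$. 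This is exactly where one uses that $K$ is $\tau$-invariant: the condition $|\rho_f(x)|\in K$ for all $x$ is equivalent to a real-algebraic condition on the Weil restriction $R_{\C/\R}\sM_d$, cut out over the subfield $K\cap\R$ (or rather over $K$ regarded inside the Weil restriction). Concretely I would pass to the Weil restriction $R_{\C/\R}\sM_d$, inside which the locus of classes $[g]$ whose length spectrum agrees with that of $f$ is a semialgebraic set defined over a subfield related to $K$.

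The second ingredient handles the failure of noetherianity for semialgebraic sets: following \cite{Ji2023}, I would use the notion of admissible subsets, which do satisfy a descending chain / finiteness property sufficient to make the spread-out work. The rigidity of length spectrum, \cite[Theorem 1.5]{Ji2023}, plays the role that McMullen's rigidity plays in the multiplier case: it says that for $f$ not flexible Latt\`es, the set of conjugacy classes with the same length spectrum as $f$ is finite (equivalently, zero-dimensional as an admissible set). So the locus $D\subseteq R_{\C/\R}\sM_d(\R)$ of classes with length spectrum equal to that of $f$ is a finite set, and it is defined over (the real form attached to) $K$; hence $[f]$, being in $D$, is a $K$-point, and we conclude.

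The main obstacle, and the technical heart of the argument, is making precise the assertion that ``length spectrum equal to that of $f$'' cuts out a set \emph{defined over $K$} inside the Weil restriction, despite the length spectrum being only semialgebraic rather than algebraic. In the multiplier case this is immediate from noetherianity plus the morphism being defined over $\Q$; here one must (i) set up the Weil restriction $R_{\C/\R}\sM_d$ and identify the image of the length-squared spectrum as a semialgebraic set whose defining data lives over $K$, using the identity $|\rho_f(x)|^2 = \rho_f(x)\rho_{\overline f}(\overline x)$ and $\tau(K)=K$; and (ii) replace the use of noetherianity by the finiteness properties of admissible sets from \cite{Ji2023}, checking that the relevant loci are admissible and that one can still take an intersection over all $x\in\Per(f)$ and land on a $K$-defined finite set. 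Once the real-algebraic spread-out framework is in place, the appeal to \cite[Theorem 1.5]{Ji2023} for zero-dimensionality and the descent from $[f]$ to $f$ are relatively routine. I would present the details of (i) and (ii) in \S\ref{sectiontranpoint}.
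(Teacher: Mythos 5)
Your plan follows essentially the same route as the paper: pass to the Weil restriction so that the squared length spectrum becomes real-algebraic over $L=K\cap\R$, use the admissible subsets of \cite{Ji2023} to replace noetherianity in the spread-out, and invoke length-spectrum rigidity to force the $K$-defined locus through $[f]$ to be zero-dimensional, hence $[f]$ a $K$-point. The one refinement worth noting when you fill in your steps (i) and (ii): since only \emph{repelling} marked periodic points (multiplier $\neq 1$) keep the finite covering of marked moduli spaces \'etale, and hence the spread-out images admissible, the finiteness input must be the rigidity statement for \emph{big repelling} length spectra (\cite[Theorem 8.25]{Ji2023}, quoted as Theorem \ref{thmbigspecrig}) rather than the plain rigidity of the full length spectrum.
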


\begin{Rem}\label{remcofinitesec2}
The condition in Theorem \ref{thmalglen} can be weakened: it suffices to require that $f$ is not flexible Latt\`es and $\lvert\rho_f(x)\rvert\in K$ for all but finitely many $x\in\Per(f)(\C)$. To show this, we need to replace $A$ in the proof in \S \ref{sectionalglenpf} by $A=(A_n=(RL^*(f)_n)\cap K)_n$, and then the original proof applies. Here, $A_n=(RL^*(f)_n)\cap K$ is the sub-multiset of the multiset $RL^*(f)_n$ consisting of elements that also lie in $K$; see \S \ref{sectionlenrig} and \S \ref{sectionalglenpf}.
\end{Rem}

Applying Theorem \ref{thmalglen} to the case $K=\overline{\Q}$, we get Theorem \ref{thmalglenintro}. We introduce the notions of Weil restriction, admissible subsets, and transcendental points in \S \ref{sectionweilrest}--\ref{sectiontranpoint}. Then we construct certain moduli spaces and study the length map on them in \S \ref{sectionmoduli}--\ref{sectionlenrig}. After these preparations, we complete the proof of Theorem \ref{thmalglen} in \S \ref{sectionalglenpf}.

\subsection{Weil restriction}\label{sectionweilrest}
Recall that $K$ is an algebraically closed subfield of $\C$ such that $\tau(K)=K$. Set $L:=K^\tau=K\cap\R$. For example, if $K=\C$, then $L=\R$. We need the following easy lemma.
\begin{Lem}\label{lemkoltwo}
We have $K=L+iL$; in particular $[K:L]=2$. 
\end{Lem}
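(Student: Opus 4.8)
The plan is to use the complex conjugation directly to split an element of $K$ into its real and imaginary parts and show both lie in $L$. First I would observe that $i\in K$: since $K$ is algebraically closed it contains a root of $x^2+1$, i.e.\ $i\in K$ (and consequently $-i=\tau(i)\in K$, consistently with $\tau(K)=K$). Note also that $\tau|_K$ is precisely the restriction of the usual complex conjugation on $\C$, so $\overline{z}=\tau(z)$ for $z\in K$.

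Next I would take an arbitrary $z\in K$ and write $z=a+bi$ with $a,b\in\R$. Since $\tau(K)=K$ we have $\overline{z}=a-bi\in K$, hence $a=\tfrac12(z+\overline{z})\in K$ and $2bi=z-\overline{z}\in K$; as $i\in K$ this gives $b=\tfrac{1}{2i}(z-\overline{z})\in K$. But $a,b\in\R$ as well, so $a,b\in K\cap\R=L$, and therefore $z=a+bi\in L+iL$. This proves $K\subseteq L+iL$; the reverse inclusion $L+iL\subseteq K$ is immediate from $L\subseteq K$ and $i\in K$. Hence $K=L+iL$.

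Finally, for the degree statement: the displayed equality $K=L+iL$ shows $\{1,i\}$ spans $K$ as an $L$-vector space, and it is $L$-linearly independent because if $a+bi=0$ with $a,b\in L\subseteq\R$, then comparing imaginary parts forces $b=0$ and then $a=0$. Thus $\{1,i\}$ is an $L$-basis of $K$ and $[K:L]=2$.

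I do not anticipate any real obstacle here: the statement is elementary, and the only structural input genuinely used is that $K$ is algebraically closed, which is what guarantees $i\in K$ so that the imaginary part of an element of $K$ again lies in $K$ (and hence in $L$). One could phrase the argument slightly differently by noting that $K\neq L$ since no subfield of $\R$ is algebraically closed, but invoking $i\in K$ directly is the cleanest route.
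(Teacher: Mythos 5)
Your proof is correct and follows essentially the same route as the paper: both arguments use $i\in K$ (from algebraic closure) together with $\tau(K)=K$ to decompose $u\in K$ as $\frac{u+\tau(u)}{2}+\frac{u-\tau(u)}{2i}\,i$ with both coefficients fixed by $\tau$ and hence in $L=K\cap\R$. Your additional verification that $\{1,i\}$ is $L$-linearly independent just makes explicit the "in particular" clause that the paper leaves to the reader.
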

\begin{proof}[Proof of Lemma \ref{lemkoltwo}]
	Since $K$ is algebraically closed, $i\in K$. In particular, $K\neq L$. For every $u\in K$, we may write
	$$u=\frac{u+\tau(u)}{2}+\frac{u-\tau(u)}{2i}i$$
	and both $\frac{u+\tau(u)}{2}$ and $\frac{u-\tau(u)}{2i}$ are contained in $L$. This completes the proof.
\end{proof}

To prove Theorem \ref{thmalglen}, we need to analyze the length map while respecting the structure of $\sM_d(K)$ as an algebraic variety over $L$ using Weil restriction, due to the appearance of the absolute value in the condition. Hence, the notion of Weil restriction, which we will briefly recall, is necessary. See \cite[\S 4.6]{Poonen2017} and \cite[\S 7.6]{Bosch1990} for more details. 

\medskip

Denote by $\Var_{/K}$ (resp. $\Var_{/L}$) the category of varieties over $K$ (resp. $L$). For every variety $X$ over $K$, there is a unique variety $R(X)$ over $L$ which represents the functor $\Var_{/L}\to {\rm Sets}$ sending $V\in\Var_{/L}$ to $\Hom(V\otimes_L K,X)$. It is called the \emph{Weil restriction of $X$ with respect to $K/L$}. The functor $X\mapsto R(X)$ is called the Weil restriction. There is a canonical bijection $\psi_K: X(K)\to R(X)(L)$ between sets. When $K=\C$, this map is a real analytic diffeomorphism. One may view $X(K)$ as the set of $L$-points of an $L$-algebraic variety $R(X)$ via $\psi_K$.

\begin{Def}\label{defirealzt}
The \emph{$L$-Zariski topology} on $X(K)$ is the restriction of the Zariski topology on $R(X)$ to $R(X)(L)$ via the identification $\psi_K$. A subset $Y$ of $X(K)$ is called \emph{$L$-algebraic} if it is closed in the $L$-Zariski topology. When $K=\C$, the $L$-Zariski topology coincides with the real Zariski topology as in \cite[\S 8.1.1]{Ji2023}.
\end{Def}

By the last statement of Proposition \ref{probasicweil} below, the $L$-Zariski topology is finer than the Zariski topology on $X(K)$.

\medskip

When $K=\C$, roughly speaking, the Weil restriction is just constructed by splitting a complex variable $z$ into two real variables $x,y$ via $z=x+iy$. For the convenience of the reader, in the following example, we show the concrete construction of $R(X)$ when $X$ is affine. 

\begin{Eg}
First assume that $X=\A^N_K$. Then $R(X)=\A^{2N}_L$. The map $$\psi_K:\A^N_K(K)=K^N\to\A^{2N}_L(L)=L^{2N}$$ sends $(z_1,\dots,z_N)$ to $(x_1,y_1,x_2,y_2,\dots,x_N,y_N)$, where $z_j=x_j+iy_j$ for $1\leq j\leq N$.

Consider the algebra
\begin{equation}\label{eqB}
	\B:=L[I]/(I^2+1)\simeq L\oplus IL\simeq K
\end{equation}
by Lemma \ref{lemkoltwo}, where the variable $I$ satisfying $I^2=-1$ corresponds to $i$ in $z_j=x_j+iy_j$. Via \eqref{eqB}, every $f\in K[z_1,\dots,z_N]\simeq\B[z_1,\dots,z_N]$ defines an element
$$F:=f(x_1+Iy_1,\dots,x_N+Iy_N)\in\B[x_1,y_1,\dots,x_N,y_N].$$
Since
$$\B[x_1,y_1,\dots,x_N,y_N]=L[x_1,y_1,\dots,x_N,y_N]\oplus IL[x_1,y_1,\dots,x_N,y_N],$$ $F$ can be uniquely decomposed as $$F=r(f)+Ii(f)$$ where $r(f),i(f)\in L[x_1,y_1,\dots,x_N,y_N]$ are determined by $f$.

More generally, if $X$ is the closed subvariety of $\A^N_K=\Spec K[z_1,\dots,z_N]$ defined by the ideal $(f_1,\dots,f_s)$ of $K[z_1,\dots,z_N]$, then $R(X)$ is the closed subvariety of
$$R(\A^N_K)=\A^{2N}_L=\Spec L[x_1,y_1,\dots,x_N,y_N]$$
defined by the ideal $(r(f_1),i(f_1),\dots,r(f_s),i(f_s))$ of $L[x_1,y_1,\dots,x_N,y_N]$.
\end{Eg}

\medskip

We list some basic properties of Weil restriction without proof.
\begin{Prop}\label{probasicweil}
Let $X,Y\in\Var_{/K}$. The following properties hold:
\begin{itemize}
\item if $X$ is irreducible, then $R(X)$ is irreducible;
\item $\dim R(X)=2\dim X$;
\item if $f:Y\to X$ is a closed (resp. open) immersion, then the induced morphism $R(f):R(Y)\to R(X)$ is a closed (resp. open) immersion.
\end{itemize}
\end{Prop}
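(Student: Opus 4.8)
The plan is to establish all three properties by working with an explicit affine description of the Weil restriction and then globalizing. First I would fix an affine open $U = \Spec A$ of $X$ (with $A$ a finitely generated $K$-algebra); the Weil restriction $R(U)$ is then $\Spec B$ where $B$ is obtained from $A$ by the "scalar restriction" construction, concretely the $L$-algebra representing $V \mapsto \Hom_{L\text{-alg}}(A, V \otimes_L K)$. The key computational fact is base change: $R(X) \otimes_L K \cong \prod_{\sigma \in \Gal(K/L)} X^\sigma$, where the product runs over the two embeddings of $K$ into its normal closure (here just $K \to K$ and the conjugate $\tau$), and $X^\sigma$ is the conjugate variety. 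In our situation $[K:L]=2$ by Lemma \ref{lemkoltwo}, so this is a product of two copies of (conjugates of) $X$.

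For the dimension statement, I would use that $R(X) \otimes_L K \cong X \times_K X^\tau$ has dimension $2\dim X$ over $K$, and that dimension is insensitive to the field extension $L \subseteq K$, so $\dim_L R(X) = \dim_K (R(X)\otimes_L K) = 2\dim X$. For the immersion statement, the functor $R$ is a right adjoint (to base change $\otimes_L K$), hence preserves monomorphisms and, more to the point, one checks directly from the explicit affine construction that a surjection $A \twoheadrightarrow A'$ of $K$-algebras (closed immersion) induces a surjection $B \twoheadrightarrow B'$ of the corresponding $L$-algebras, and similarly a localization $A \to A_g$ (open immersion) induces the appropriate localization on Weil restrictions; globalizing over an affine cover finishes it. For irreducibility, I would argue that if $X$ is irreducible then so is $X \times_K X^\tau$ over $K$ — this needs a geometric-integrality input: $X$ irreducible over $K$ with $K$ algebraically closed is geometrically irreducible, so the product of two geometrically irreducible $K$-varieties is irreducible — and then descend: $R(X)$ is irreducible over $L$ because $R(X) \otimes_L K$ is irreducible (a scheme is irreducible if it becomes irreducible after a field extension).

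The main obstacle I expect is the irreducibility claim, specifically getting the base-change isomorphism $R(X) \otimes_L K \cong X \times_K X^\tau$ cleanly and then using it to transfer irreducibility in both directions across the extension $L \subseteq K$. The subtlety is that irreducibility does not ascend along arbitrary field extensions in general (one needs geometric irreducibility), but here it works because $K$ is algebraically closed, so $X$ irreducible over $K$ is automatically geometrically irreducible, and the descent direction (irreducible after extension $\Rightarrow$ irreducible before) is always valid. Once the base-change formula is in hand the rest is bookkeeping. Since these are all standard facts about Weil restriction, I would actually just cite \cite[\S 4.6]{Poonen2017} and \cite[\S 7.6]{Bosch1990} and indicate the base-change isomorphism as the one nontrivial ingredient, rather than reproving everything — which is presumably why the statement in the paper is given without proof.
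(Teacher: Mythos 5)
Your proposal is correct, and it uses exactly the ingredient the paper itself supplies for this purpose: the paper states Proposition \ref{probasicweil} without proof, and the base-change isomorphism $R(X)\otimes_L K\simeq X\times X^{\tau}$ that you identify as the crux is precisely the paper's Proposition \ref{proweilconj} (cited to \cite[Exercise 4.7]{Poonen2017}), from which irreducibility (via geometric irreducibility over the algebraically closed field $K$ and descent along the surjection $R(X)\otimes_L K\to R(X)$), the dimension count, and the preservation of immersions all follow as you describe. No gaps; citing \cite[\S 4.6]{Poonen2017} and \cite[\S 7.6]{Bosch1990} as you suggest is exactly what the paper intends.
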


We still denote by $\tau$ the restriction of $\tau$ to $K$. Denote by $X^\tau$ the base change of $X$ by the homomorphism $\tau:K\to K$. This induces a morphism of schemes (over $\Z$) $\tau:X^\tau\to X$. It is not a morphism of schemes over $K$. It is clear that $(X^\tau)^\tau=X$.
\begin{Eg}
If $X$ is the subvariety of $\A^N_K=\Spec K[z_1,\dots,z_N]$ defined by the equations $\sum_I a_{i,I}z^I=0$ ($i=1,\dots,s$), then $X^\tau$ is the subvariety of $\A^N_K$ defined by $\sum_I\tau(a_{i,I})z^I=0$ ($i=1,\dots,s$). The map $\tau: X=(X^\tau)^\tau\to X^\tau$ sends a point $(z_1,\dots,z_N)\in X(K)$ to $(\tau(z_1),\dots,\tau(z_N))\in X^\tau(K)$.
\end{Eg}

The following result due to Weil is useful for computing the Weil restriction.
\begin{Prop}[{\cite[Exercise 4.7]{Poonen2017}}]\label{proweilconj}
There is a canonical isomorphism
$$R(X)\otimes_L K\simeq X\times X^\tau.$$
Under this isomorphism,
$$R(X)(L)=\{(z_1,z_2)\in X(K)\times X^{\tau}(K)\mid z_2=\tau(z_1)\}$$
and $\psi_K$ sends $z\in X(K)$ to $(z,\tau(z))\in R(X)(L)$.
\end{Prop}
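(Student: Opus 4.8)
The plan is to deduce Proposition~\ref{proweilconj} from the universal property of the Weil restriction together with the elementary fact, a consequence of Lemma~\ref{lemkoltwo}, that $K/L$ is a separable quadratic extension, so that $K\otimes_L K\simeq K\times K$ with the two factors corresponding to the identity and to $\tau$. I would work throughout with functors of points and Yoneda's lemma.

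First I would record the adjunction that characterizes $R=R_{K/L}$: it is right adjoint to base change $-\otimes_L K:\Var_{/L}\to\Var_{/K}$, that is, $\Hom_L(V,R(X))\simeq\Hom_K(V\otimes_L K,X)$ naturally in $V\in\Var_{/L}$. Next, for an arbitrary $K$-scheme $T$ I would compute $T\otimes_L K=T\times_{\Spec L}\Spec K=T\times_{\Spec K}(\Spec K\times_{\Spec L}\Spec K)$; writing $\Spec K\times_{\Spec L}\Spec K=\Spec(K\otimes_L K)=\Spec K\sqcup\Spec K$, with the first copy mapping to the base $\Spec K$ by the identity and the second by $\tau$, this gives a canonical decomposition $T\otimes_L K\simeq T\sqcup T^{\tau}$ of $K$-schemes. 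Feeding this into the adjunction and using $\Hom_K(T^{\tau},X)\simeq\Hom_K(T,X^{\tau})$ (valid since $(X^{\tau})^{\tau}=X$) yields, functorially in $T$,
\begin{align*}
\Hom_K(T,R(X)\otimes_L K)&\simeq\Hom_L(T,R(X))\simeq\Hom_K(T\otimes_L K,X)\\
&\simeq\Hom_K(T,X)\times\Hom_K(T^{\tau},X)\simeq\Hom_K(T,X\times X^{\tau}),
\end{align*}
so Yoneda produces the canonical isomorphism $R(X)\otimes_L K\simeq X\times X^{\tau}$.

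To identify the $L$-points, take $V=\Spec L$ in the adjunction: $R(X)(L)\simeq\Hom_K(\Spec K,X)=X(K)$, which is the map $\psi_K$. Since $K/L$ is Galois with group $\{1,\tau\}$ and $R(X)$ is quasi-projective (which is all we need here), $R(X)(L)=R(X)(K)^{\tau}$, while $R(X)(K)=(R(X)\otimes_L K)(K)=X(K)\times X^{\tau}(K)$. The one point that needs genuine care — and the main obstacle — is to transport the Galois action of $\tau$ on $R(X)(K)$ through the isomorphism just constructed: because that isomorphism interchanges the two copies of $\Spec K$ inside $K\otimes_L K$ under $\tau$, the induced involution on $X(K)\times X^{\tau}(K)$ is \emph{not} coordinatewise conjugation but the ``swap-and-conjugate'' map $(z_1,z_2)\mapsto(\tau(z_2),\tau(z_1))$, whose fixed locus is exactly $\{(z_1,z_2):z_2=\tau(z_1)\}$. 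Tracing a point $z\in X(K)$ through $\psi_K$ and these identifications then shows it maps to $(z,\tau(z))$. Pinning down the precise form of this involution is the only non-formal step; I would verify it directly in the affine case — where the computation $(x,y)\mapsto(x+iy,\,x-iy)$ with $x,y\in L$ makes it transparent — and then glue, or simply cite \cite[Exercise~4.7]{Poonen2017} and \cite[\S 7.6]{Bosch1990}.
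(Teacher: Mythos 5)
Your argument is correct. Note, however, that the paper does not prove this proposition at all: it is stated as a known fact and attributed to \cite[Exercise 4.7]{Poonen2017}, so there is no in-paper proof to compare against. What you have written is the standard derivation that fills in that citation: the adjunction $\Hom_L(V,R(X))\simeq\Hom_K(V\otimes_LK,X)$, the splitting $K\otimes_LK\simeq K\times K$ (which for this $K/L$ follows from Lemma \ref{lemkoltwo} and separability in characteristic $0$), the resulting decomposition $T\otimes_LK\simeq T\sqcup T^{\tau}$, and Yoneda. You also correctly isolate the one step where a sign error is easy to make: the Galois involution on $X(K)\times X^{\tau}(K)$ is the swap-and-conjugate map $(z_1,z_2)\mapsto(\tau(z_2),\tau(z_1))$, not coordinatewise conjugation, and its fixed locus is indeed $\{(z_1,\tau(z_1))\}$ because $\tau^2=\mathrm{id}$ makes the two resulting equations equivalent; the affine check $(x,y)\mapsto(x+iy,\,x-iy)$ confirms this and matches the paper's explicit description of $R(\A^N_K)$. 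Two harmless points to keep in mind if you write this out in full: Yoneda must be applied over a category of $K$-schemes closed under the disjoint union $T\sqcup T^{\tau}$ (so ``variety'' should not be taken to mean irreducible here, consistent with the paper's usage), and the identification $R(X)(L)=R(X)(K)^{\tau}$ uses that $R(X)$ is quasi-projective, which you note.
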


\subsection{Admissible subsets}
In this subsection, we recall the notion of admissible subsets of real algebraic varieties introduced in \cite{Ji2023}, which satisfies the important descending chain condition.

\medskip

Let $X$ be a variety over $\R$.
\begin{Def}[{\cite[\S 8.2]{Ji2023}}]
A closed subset $V$ of $X(\R)$ (with the real topology) is called \emph{admissible} if there is a morphism $f: Y\to X$ of real algebraic varieties and a Zariski closed subset $V^\prime\subseteq Y$ such that $V=f(V^\prime(\R))$ and $f$ is \'etale at every point in $V^\prime(\R)$.
\end{Def}

In particular, every algebraic subset of $X(\R)$ is admissible.

\begin{Rem}
Denote by $J$ the non-\'etale locus of $f$ in $Y$. Then $J\cap V^\prime(\R)=\emptyset$. Since we may replace $(Y,V^\prime)$ by $(Y\setminus J,V^\prime\cap(Y\setminus J))$, in the above definition we may further assume that $f$ is \'etale.
\end{Rem}

\begin{Prop}[{\cite[Remarks 8.14, 8.15, and Proposition 8.16]{Ji2023}}]\label{propadbasic}
The following basic properties hold:
\begin{itemize}
\item[(1)] Let $Y$ be a Zariski closed subset of $X$.
If $V$ is admissible as a subset of $X(\R)$, then $V\cap Y$ is admissible as a subset of $Y(\R)$.
\item[(2)] An admissible subset is semialgebraic. 
\item[(3)] Let $V_1$ and $V_2$ be two admissible closed subsets of $X(\R)$. Then $V_1\cap V_2$ is admissible.
\end{itemize}
\end{Prop}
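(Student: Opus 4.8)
The plan is to prove the three properties by unwinding the definition of admissibility and using standard facts about étale morphisms and semialgebraic sets. For (1), suppose $V = f(V'(\R))$ for some morphism $f:Y\to X$ of real algebraic varieties, étale at every point of $V'(\R)$, with $V'$ Zariski closed in $Y$. Let $Y$ be a Zariski closed subset of $X$ (I reuse the paper's notation, though the ambient variety is named $X$; more carefully, let $Z\subseteq X$ be Zariski closed). Then I would form the fiber product $Y_Z := f^{-1}(Z) = Y\times_X Z$, which is Zariski closed in $Y$, and set $V'_Z := V'\cap Y_Z$, which is Zariski closed in $Y_Z$. The restriction $f|_{Y_Z}: Y_Z\to Z$ is again a morphism of real algebraic varieties, étale at every point of $V'_Z(\R)$ since étaleness is stable under base change, and one checks $f|_{Y_Z}(V'_Z(\R)) = f(V'(\R))\cap Z(\R) = V\cap Z(\R)$. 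Hence $V\cap Z(\R)$ is admissible in $Z(\R)$.

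For (2), the key inputs are that an étale morphism of real algebraic varieties is, on real points, a local diffeomorphism (in particular has semialgebraic image locally), and that the image of a semialgebraic set under a semialgebraic map is semialgebraic (Tarski–Seidenberg). Since $V'(\R)$ is a semialgebraic subset of $Y(\R)$ and $f$ restricted to a neighborhood of $V'(\R)$ (after discarding the non-étale locus as in the Remark, so $f$ is étale everywhere) is a morphism of real algebraic varieties — hence a semialgebraic map on real points — its image $V = f(V'(\R))$ is semialgebraic. I would cite the standard reference (Bochnak–Coste–Roy) for Tarski–Seidenberg rather than reprove it.

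For (3), given admissible closed $V_1 = f_1(V_1'(\R))$ and $V_2 = f_2(V_2'(\R))$ with $f_j: Y_j\to X$ étale on $V_j'(\R)$, I would form $W := Y_1\times_X Y_2$ with its two projections, and the morphism $g := f_1\circ\mathrm{pr}_1 = f_2\circ\mathrm{pr}_2: W\to X$. Since étaleness is stable under composition and base change, $g$ is étale at every point of $W'(\R)$ where $W' := (V_1'\times_X Y_2)\cap(Y_1\times_X V_2')$, a Zariski closed subset of $W$. The real points of $W$ over a point $x\in X(\R)$ are pairs of real points in the fibers of $f_1$ and $f_2$ over $x$; tracing through, $g(W'(\R))$ consists exactly of those $x$ lying in both $f_1(V_1'(\R))$ and $f_2(V_2'(\R))$, i.e. $g(W'(\R)) = V_1\cap V_2$. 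This exhibits $V_1\cap V_2$ as admissible.

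The main obstacle I anticipate is the bookkeeping in (3): one must verify carefully that a \emph{real} point of the fiber product $W$ over $x\in X(\R)$ corresponds to an \emph{independent} choice of a real point in each of $f_1^{-1}(x)$ and $f_2^{-1}(x)$ — this uses that the fiber product of affine schemes corresponds to tensor product of rings and that $\R$-points behave well here — and that the closedness of $V_1\cap V_2$ (needed since the statement concerns \emph{closed} admissible subsets) is automatic as an intersection of closed sets. Everything else is a routine application of the permanence properties of étale morphisms together with Tarski–Seidenberg; since these are all recorded in \cite{Ji2023}, the proof amounts to assembling the constructions above, and I would keep it brief.
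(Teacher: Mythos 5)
Your proof is correct: the base-change construction for (1), Tarski--Seidenberg for (2), and the fiber product $Y_1\times_X Y_2$ with $W'=V_1'\times_X V_2'$ for (3) are exactly the natural arguments, and the key verifications (stability of \'etaleness under base change and composition, and the fact that $(Y_1\times_X Y_2)(\R)=Y_1(\R)\times_{X(\R)}Y_2(\R)$) all go through as you say. The paper itself gives no proof, quoting the result from \cite{Ji2023}, where the argument is essentially the one you describe, so there is nothing to reconcile.
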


The following theorem shows that admissible subsets satisfy the descending chain condition.
\begin{Thm}[{\cite[Theorem 8.17]{Ji2023}}]\label{thmNoetherianad}
Let $(V_n)_{n\geq 1}$ be a decreasing sequence of admissible subsets of $X(\R)$. Then there exists $N\geq 1$ such that $V_n=V_N$ for all $n\geq N$.
\end{Thm}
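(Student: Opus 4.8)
The plan is to prove the descending chain condition by Noetherian induction on $\dim X$, the engine being a sharp dichotomy for admissible subsets of an irreducible variety together with the ordinary Noetherianity of the Zariski topology.

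First I would reduce to the case that $X$ is irreducible: $X$ has finitely many irreducible components $X_1,\dots,X_r$, each $V_n\cap X_i(\R)$ is admissible in $X_i(\R)$ by Proposition \ref{propadbasic}(1) and is decreasing in $n$, and $V_n=\bigcup_i\bigl(V_n\cap X_i(\R)\bigr)$, so it suffices to stabilize each of the $r$ sequences separately. Assuming $X$ irreducible, the key observation is the following dichotomy: if $V\subseteq X(\R)$ is admissible, then either $\dim V<\dim X$, or $V$ is a union of connected components of $X(\R)$ (in particular clopen in the Euclidean topology). Indeed, write $V=f(V'(\R))$ with $f:Y\to X$ a morphism of varieties, $V'\subseteq Y$ Zariski closed, and $f$ \'etale at every point of $V'(\R)$; we may take $Y$ irreducible. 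If $V'(\R)=\emptyset$ then $V=\emptyset$. Otherwise \'etaleness along $V'(\R)$ forces $\dim Y=\dim X$, so if $V'\subsetneq Y$ then $\dim V\le\dim V'<\dim Y=\dim X$; and if $V'=Y$, then $V=f(Y(\R))$ is open in $X(\R)$ because an \'etale morphism is a local analytic diffeomorphism on real points, and hence, being also closed by the definition of admissibility, is clopen and thus a union of connected components of $X(\R)$. This is exactly where the Euclidean-closedness built into admissibility is indispensable: the open, non-closed sets $(-\infty,-2c)\cup(2c,\infty)\subseteq\R=\A^1(\R)$, $c\in\Z_{>0}$, each realized as the image on real points of the \'etale morphism $s\mapsto s+c^2s^{-1}$ on the affine curve $\{s\neq 0,\pm c\}$, would form an infinite strictly descending chain were closedness dropped from the definition.

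With the dichotomy in hand the induction is short. For $\dim X=0$ the statement is clear. For $\dim X>0$ and $X$ irreducible, given a decreasing sequence $(V_n)$ of admissible subsets of $X(\R)$: either every $V_n$ is a union of connected components of $X(\R)$ — and since $X(\R)$, being semialgebraic, has only finitely many connected components, the sequence must stabilize — or there is a least $n_0$ with $\dim V_{n_0}<\dim X$, whence $\dim V_n<\dim X$ for all $n\ge n_0$. In the latter case the Zariski closures $W_n:=\overline{V_n}^{\Zar}$, $n\ge n_0$, form a decreasing chain of proper closed subvarieties of $X$, which stabilizes to some $W$ with $\dim W<\dim X$ by Noetherianity of the Zariski topology; for $n$ large, $V_n\subseteq W(\R)$ and $V_n$ is admissible in $W(\R)$ (restrict the presenting \'etale morphism to the preimage of $W$), so the inductive hypothesis applied to $W$ shows $(V_n)$ stabilizes.

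The main obstacle is the dichotomy, and within it the bookkeeping: reducing the presenting variety $Y$ to an irreducible one, checking that \'etaleness along $V'(\R)$ genuinely pins down $\dim Y=\dim X$, and verifying that admissibility is preserved under restriction to (possibly reducible) closed subvarieties and under passing to connected components. Each of these points is routine but must be handled with care. Everything downstream is a standard dimension induction combined with the finiteness of connected components of a semialgebraic set and the Noetherianity of the Zariski topology.
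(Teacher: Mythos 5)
The paper does not actually prove this statement --- it is imported verbatim from \cite[Theorem 8.17]{Ji2023} --- so there is no in-paper proof to compare against and I am judging your argument on its own terms. It has a genuine gap, located exactly where you say the main obstacle is: the dichotomy is false, and the step ``we may take $Y$ irreducible'' is where it breaks. Replacing $Y$ by one of its irreducible components destroys the Euclidean-closedness of the image $f(V'(\R))$, and closedness is the only thing standing between your ``$V'=Y$'' branch and an arbitrary open set; a reducible $Y$ can use a full-dimensional component to produce an open, non-closed image and a lower-dimensional component to supply exactly the missing boundary points, so that the union is closed (hence admissible) without being either lower-dimensional or clopen. Concretely, under the definition as quoted, $[0,1]\subseteq\A^1(\R)=\R$ is admissible: take $Y$ to be the disjoint union of the punctured conic $C^*=\{y^2=x(1-x)\}\setminus\{(0,0),(1,0)\}$, on which the projection $(x,y)\mapsto x$ is everywhere \'etale with real image $(0,1)$, and a copy of $\A^1$ mapping by the identity with $V'\cap\A^1=\{0,1\}$; then $V'=C^*\sqcup\{0,1\}$ is Zariski closed, $f$ is \'etale along $V'(\R)$, and $f(V'(\R))=[0,1]$. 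This set is full-dimensional in $\A^1$ and is not a union of connected components of $\R$, refuting the dichotomy. Your own example $(-\infty,-2c)\cup(2c,\infty)$ shows you saw that closedness matters, but you did not notice that the boundary can simply be adjoined by an extra component of $Y$.

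The damage is not confined to the dichotomy. The same construction exhibits $([0,1/n])_{n\geq 1}$ as a strictly decreasing, non-stabilizing chain of admissible sets, so the theorem is actually \emph{false} for the definition as literally reproduced in this paper; the definition in \cite{Ji2023} must carry (and the paper's application via the finite morphism $[\phi_{n,0}]$ implicitly uses) a properness/finiteness requirement on the presenting morphism $f$, which forces $f(D(\R))$ to be closed for each top-dimensional component $D$ of $V'$ and rules out the punctured-conic trick. Your argument never engages with this, and even after imposing finiteness the correct structural statement is that an admissible $V$ in irreducible $X$ is a union of a clopen piece and a closed piece of dimension $<\dim X$; such a mixed set falls into neither branch of your case split (``every $V_n$ is a union of components'' versus ``some $V_{n_0}$ has $\dim V_{n_0}<\dim X$''), so the induction does not close as written. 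A repair would have to track the two pieces separately --- e.g.\ stabilize the clopen parts using the finiteness of the set of connected components of $X(\R)$, then pass to the Zariski closures of the complementary lower-dimensional parts --- together with a verification that this decomposition behaves monotonically along the chain.
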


\subsection{Transcendental points}\label{sectiontranpoint}
Let $X_K$ be a variety over $K$ and $X:= X_K\otimes_K\C$. We regard $X_K$ as a model of $X$ over $K$.

Denote by $\pi_K: X\to X_K$ the natural projection. For every point $x\in X(\C)$, define $Z(x)_K$ to be the Zariski closure of $\pi_K(x)$ in $X_K$ and $Z(x):=\pi_K^{-1}(Z(x)_K)\subseteq X$. It is clear that $Z(x)$ is irreducible. We call $Z(x)$ the \emph{$\C/K$-closure of $x$ with respect to the model $X_K$}. We say that $x$ is \emph{transcendental} if $\dim Z(x)\geq 1$ and call $\dim Z(x)$ the \emph{transcendence degree} of $x$.

\medskip

The notion of transcendental points plays an important role in the proof of some cases of the geometric Bombieri-Lang conjecture by Xie-Yuan \cite{Partial1,Partial2} and in the proof of the dynamical Andr\'e-Oort conjecture for curves by Ji-Xie \cite{ji2023dao}. Roughly speaking, a very general point in $Z(x)$ satisfies the same algebraic properties as $x$. In this paper, we study lengths of periodic points, whose definition (see \S1.1) involves the absolute value function $|\cdot|:\C\to\R_{\geq 0}$ which is not algebraic. However, the square $|\cdot|^2:\C\to\R$ is real algebraic. For this reason, we need to generalize the notion of transcendental points to respect the structure of $X_K(K)$ as an algebraic variety over $L$ and the structure of $X(\C)$ as a real algebraic variety. This generalized notion is important in the proof of Theorem \ref{thmalglen}. 

\medskip

The Weil restriction $R(X)$ of $X$ with respect to $\C/\R$ is an algebraic variety over $\R$, and the Weil restriction $R(X_K)$ of $X_K$ with respect to $K/L$ is an algebraic variety over $L$. We have $R(X)=R(X_K)\otimes_L\R$. Denote by $\pi_L:R(X)\to R(X_K)$ the natural projection. For every $x\in X(\C)$, let $Y(x)_L$ be the Zariski closure of $\pi_L(\psi_\C(x))$ in $R(X_K)$ and $Y(x):=\pi_L^{-1}(Y(x)_L)\subseteq R(X)$, where $\psi_\C:X(\C)\to R(X)(\R)$ is the identification from \S \ref{sectionweilrest}. Set $Z^{\R}(x):=\psi_\C^{-1}(Y(x)(\R))$, which is a real Zariski closed subset of $X(\C)$.

\medskip

We now give a more concrete description of $Z(x)$ and $Z^{\R}(x)$. Let $U_K$ be an irreducible affine open neighborhood of $\pi_K(x)$ in $X_K$ (over $K$). Set $U:=\pi^{-1}_K(U_K)=U_K\otimes_K\C$. We have a natural embedding $\pi_K^*:\sO(U_K)\hookrightarrow\sO(U)$. We view the elements in $\sO^K(U):=\pi_K^*(\sO(U_K))\subseteq\sO(U)$ as the algebraic functions on $U(\C)$ defined over $K$. Then we have
$$Z(x)\cap U=\{y\in U\mid h(y)=0\text{ for every }h\in\sO^K(U)\text{ with }h(x)=0\}$$
and $Z(x)$ is the Zariski closure of $Z(x)\cap U$.

\medskip

As $\sO(R(U)_\C)=\sO(R(U))\otimes_\R\C$, every $h\in\sO(R(U)_\C)$ can be viewed as a $\C$-valued algebraic function on $R(U)(\R)$. Every $h\in\sO(R(U)_\C)$ induces a function $h\circ\psi_\C$ on $U(\C)$. Functions of this form are exactly the $\C$-valued real algebraic functions on $U(\C)$. Denote by $\sC^{\R-\alg}(U)$ the $\R$-algebra of $\C$-valued real algebraic functions on $U(\C)$. Since algebraic functions are real algebraic, we have a natural embedding $\sO(U)\subseteq\sC^{\R-\alg}(U)$. By Proposition \ref{proweilconj}, we have
$$\sC^{\R-\alg}(U)\simeq\sO(U)\otimes_\C\tau(\sO(U)).$$

Let $\sO^L(R(U)):=\pi_L^*(\sO(R(U_K)))$ be the set of algebraic functions defined over $L$ on $R(U)$. Let $\sC^{\R-\alg,L}(U)$ be the image of $\sO^L(R(U))\otimes_L K$ in $\sC^{\R-\alg}(U)$, which is the set of $\C$-valued real algebraic functions on $U(\C)$ defined over $L$. It is clear that $\sO^K(U)\subseteq\sC^{\R-\alg, L}(U)$. By Proposition \ref{proweilconj}, we have
$$\sC^{\R-\alg,L}(U)\simeq\sO^K(U)\otimes_K\tau(\sO^K(U)).$$

We have
$$Z^{\R}(x)\cap U(\C)=\{y\in U(\C)\mid h(y)=0\text{ for every } h\in\sC^{\R-\alg,L}(U)\text{ with }h(x)=0\}$$
and $Z^{\R}(x)$ is the real Zariski closure of $Z^{\R}(x)\cap U(\C)$. This implies the following lemma.
\begin{Lem}\label{lemmorphismzx}
Let $f_K: X^\prime_K\to X_K$ be a morphism between $K$-varieties. Set $X^\prime:=X^\prime_K\otimes_K\C$ and let $f: X^\prime\to X$ be the morphism induced by $f$. Let $x^\prime\in X^\prime(\C)$ and $x:=f(x^\prime)\in X(\C)$. Then $f(Z^\R(x^\prime))\subseteq Z^{\R}(x)$.
\end{Lem}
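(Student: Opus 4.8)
The plan is to work in affine charts and to compare, exactly as in the concrete description of $Z^{\R}$ recorded just above the statement, the algebras of $\C$-valued real-algebraic functions defined over $L$. First I would choose an irreducible affine open neighborhood $U_K$ of $\pi_K(x)$ in $X_K$, and then an irreducible affine open neighborhood $U_K^\prime$ of $\pi_K(x^\prime)$ in $X_K^\prime$ small enough that $f_K(U_K^\prime)\subseteq U_K$; this is possible since $f_K$ is continuous and $f_K(\pi_K(x^\prime))=\pi_K(f(x^\prime))=\pi_K(x)\in U_K$ (I write $\pi_K$ for the projection to the $K$-model on both sides). Writing $U:=U_K\otimes_K\C$ and $U^\prime:=U_K^\prime\otimes_K\C$, the morphism $f$ restricts to $f\colon U^\prime\to U$, and pullback of functions $h\mapsto h\circ f$ gives a ring homomorphism $f^*$ whose value on $\C$-points of $U^\prime$ is evaluation of $h$ at $f(y^\prime)$.

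The crucial point is that $f^*$ carries $\sC^{\R-\alg,L}(U)$ into $\sC^{\R-\alg,L}(U^\prime)$. Using the identification $\sC^{\R-\alg,L}(U)\simeq\sO^K(U)\otimes_K\tau(\sO^K(U))$ noted above, it suffices to check that $f^*$ maps $\sO^K(U)=\pi_K^*\sO(U_K)$ into $\sO^K(U^\prime)=\pi_K^*\sO(U_K^\prime)$, which holds because $\pi_K\circ f=f_K\circ\pi_K$ forces $f^*\circ\pi_K^*=\pi_K^*\circ f_K^*$, and that $f^*$ intertwines the two conjugations $\tau$, which is again immediate from $f$ being the base change of the $K$-morphism $f_K$. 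One can phrase the same step structurally through Weil restriction: $R(f_K)\colon R(X_K^\prime)\to R(X_K)$ is the functorial image of $f_K$, and under the isomorphism $R(X)\otimes_LK\simeq X\times X^{\tau}$ of Proposition~\ref{proweilconj} it becomes $f\times f^{\tau}$, which makes the compatibility transparent. This bookkeeping is the only slightly delicate point; everything else is formal, and I expect it to be the main (mild) obstacle.

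With this in hand the lemma follows by a point chase. Let $y^\prime\in Z^{\R}(x^\prime)\cap U^\prime(\C)$; then $f(y^\prime)\in U(\C)$. For any $h\in\sC^{\R-\alg,L}(U)$ with $h(x)=0$ we have $f^*h=h\circ f\in\sC^{\R-\alg,L}(U^\prime)$ and $(f^*h)(x^\prime)=h(f(x^\prime))=h(x)=0$, so by the description of $Z^{\R}(x^\prime)\cap U^\prime(\C)$ we get $(f^*h)(y^\prime)=0$, i.e.\ $h(f(y^\prime))=0$. Since $h$ was arbitrary, the description of $Z^{\R}(x)\cap U(\C)$ gives $f(y^\prime)\in Z^{\R}(x)$. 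Thus $f\big(Z^{\R}(x^\prime)\cap U^\prime(\C)\big)\subseteq Z^{\R}(x)$. Finally, $Z^{\R}(x^\prime)$ is the real Zariski closure of $Z^{\R}(x^\prime)\cap U^\prime(\C)$, the set $Z^{\R}(x)$ is real Zariski closed, and $f$ is continuous for the real Zariski topology — via $\psi_{\C}$ and $\psi_{\C}^\prime$ it corresponds to the $\R$-morphism $R(f)\colon R(X^\prime)\to R(X)$ restricted to real points — so $f\big(Z^{\R}(x^\prime)\big)\subseteq Z^{\R}(x)$, as claimed.
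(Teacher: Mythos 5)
Your proof is correct and follows exactly the route the paper intends: the paper gives no separate proof of this lemma but derives it ("This implies the following lemma") from the description of $Z^{\R}(x)\cap U(\C)$ as the common zero locus of the functions in $\sC^{\R-\alg,L}(U)$ vanishing at $x$, which is precisely the pullback-compatibility and point-chase argument you spell out. Your write-up simply makes explicit the steps the paper leaves to the reader.
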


\begin{Lem}\label{lemcomparezzr}
We have $Z^\R(x)\subseteq Z(x)$ and $Z^\R(x)$ is Zariski-dense in $Z(x)$. In particular, if $x$ is transcendental, then $\dim_\R Z^\R(x)\geq1$.
\end{Lem}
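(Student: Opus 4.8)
The plan is to work in an affine chart and to realise $Z^{\R}(x)$, through the identification $\psi_{\C}$, as the real locus of a real form of an auxiliary $K$-variety, and then to project down. Fix an irreducible affine open $U_K\subseteq X_K$ containing $\pi_K(x)$, and recall $\sC^{\R-\alg,L}(U)\simeq\sO^K(U)\otimes_K\tau(\sO^K(U))$; set $W:=\Spec\big(\sC^{\R-\alg,L}(U)\big)=U_K\times_K U_K^{\tau}$, where $U_K^{\tau}:=\Spec\big(\tau(\sO^K(U))\big)$. Evaluation at $x$ corresponds to the $\C$-point $(x,\tau(x))$ of $W$, and the conjugation $\tau$ (which identifies $U(\C)$ with $U^{\tau}(\C)$) makes $\psi_{\C}(y)=(y,\tau(y))$. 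Let $V:=\overline{\{(x,\tau(x))\}}\subseteq W$ be the $K$-Zariski closure, with ideal $\mathfrak q:=\{h\in\sC^{\R-\alg,L}(U):h(x)=0\}$, and put $I_0:=\mathfrak q\cap\sO^K(U)=\{P\in\sO^K(U):P(x)=0\}$. By the concrete descriptions in \S\ref{sectiontranpoint}, $Z(x)\cap U$ is cut out by $I_0$, while $Z^{\R}(x)\cap U(\C)=\{y\in U(\C):h(y)=0\text{ for all }h\in\mathfrak q\}=\{y\in U(\C):(y,\tau(y))\in V(\C)\}$. Since $I_0\subseteq\mathfrak q$, every such $y$ also satisfies $P(y)=0$ for $P\in I_0$, so $Z^{\R}(x)\cap U(\C)\subseteq Z(x)\cap U(\C)$; as $Z(x)$ is complex algebraic, hence real Zariski closed in $X(\C)$, and $Z^{\R}(x)$ is the real Zariski closure of $Z^{\R}(x)\cap U(\C)$, this already gives $Z^{\R}(x)\subseteq Z(x)$.

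For Zariski density, the point is that $V$ carries a natural real structure whose real locus is precisely $\psi_{\C}(Z^{\R}(x)\cap U(\C))$. Indeed, $V$ is geometrically irreducible because $K$ is algebraically closed, its generic point having residue field $K(x,\tau(x))\subseteq\C$. Thanks to $\tau(K)=K$, the map $h\mapsto\overline h$ sending a function to its complex conjugate is a conjugate-linear involution of $\sC^{\R-\alg,L}(U)$, and it preserves $\mathfrak q$ because $\overline h(x)=\overline{h(x)}$; hence the antiholomorphic involution $\iota\colon(p,q)\mapsto(\tau q,\tau p)$ of $W(\C)$ preserves $V(\C)$ and defines a real structure on $V_{\C}:=V\otimes_K\C$. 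Let $V_{\R}$ be the corresponding (geometrically irreducible) variety over $\R$; then, $W$ and $V$ being affine, $V_{\R}(\R)=V_{\C}(\C)^{\iota}=\{(y,\tau(y)):y\in Z^{\R}(x)\cap U(\C)\}=\psi_{\C}\big(Z^{\R}(x)\cap U(\C)\big)$. The point $(x,\tau(x))$ is $\iota$-fixed and lies over the generic point of $V$, hence over a smooth point; so it is a smooth $\R$-point of $V_{\R}$, and therefore $V_{\R}(\R)$ is Zariski dense in $V_{\C}$ (a geometrically irreducible real variety possessing a smooth real point has Zariski-dense real points, by the implicit function theorem). Finally, $\mathfrak q\cap\sO^K(U)=I_0$ shows that the first projection $p_1\colon W\to U_K$ maps $V$ onto a set with Zariski closure $Z(x)\cap U$; applying the base change of $p_1$ to the dense set $V_{\R}(\R)$ and using $p_1(y,\tau(y))=y$, we conclude that $Z^{\R}(x)\cap U(\C)$ is Zariski dense in $Z(x)\cap U$, hence, since $Z(x)\cap U$ is dense in the irreducible $Z(x)$, that $Z^{\R}(x)$ is Zariski dense in $Z(x)$.

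For the final assertion: if $x$ is transcendental then $\dim Z(x)\geq1$, so the Zariski-dense subset $Z^{\R}(x)$ cannot be finite, and being a semialgebraic subset of $X(\C)$ it must satisfy $\dim_{\R}Z^{\R}(x)\geq1$. The step I expect to be the main obstacle is the density input: verifying that $\psi_{\C}(Z^{\R}(x)\cap U(\C))$ is genuinely the whole real locus of the real form $(V_{\C},\iota)$ — this rests on the explicit descriptions in \S\ref{sectiontranpoint} together with Proposition \ref{proweilconj}, which is what ensures that $p_1$ becomes the identity on $U(\C)$ after the identification $\psi_{\C}$ — and then invoking that a smooth real point forces dense real points on a geometrically irreducible variety; the hypothesis that $K$ is algebraically closed enters exactly here, since it guarantees that $V$, and hence $V_{\R}$, is geometrically irreducible. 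Everything else is routine manipulation of ideals and Zariski closures.
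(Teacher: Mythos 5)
Your argument is correct, but it takes a genuinely different route from the paper's. The paper's proof is purely algebraic: given $h\in\sO(X)$ vanishing on $Z^{\R}(x)$, it expands $h\otimes_{\C}1$ inside $\sC^{\R-\alg}(X)\simeq\sO^K(X)\otimes_K\tau(\sO^K(X))\otimes_K\C$ with respect to a $K$-basis of $\C$ and a $K$-basis of $\sO^K(X)$, compares coefficients to conclude that the components $g_j$ in fact lie in $\sO^K(X)$ and vanish at $x$, and hence that $h$ vanishes on $Z(x)$. You instead package the data into the $K$-variety $V=\overline{\{(x,\tau(x))\}}\subseteq U_K\times_K U_K^{\tau}$, equip $V_{\C}$ with the real structure induced by $h\mapsto\overline h$, observe that $\psi_{\C}(Z^{\R}(x)\cap U(\C))$ is exactly the real locus of the resulting real form and that $(x,\tau(x))$ is a smooth real point on it, then invoke the classical fact that a geometrically irreducible real variety with a smooth real point has Zariski-dense real locus, and finally project by $p_1$. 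The trade-off is clear: the paper's argument is self-contained linear algebra over $K$, whereas yours imports a transcendental input (implicit function theorem plus the identity theorem for holomorphic functions) but in exchange makes the mechanism of density transparent --- the point $x$ itself is the smooth real point that forces density --- and isolates exactly where algebraic closedness of $K$ enters (geometric irreducibility of $V$). The individual steps you flag all check out: the identification $V_{\R}(\R)=\psi_{\C}\bigl(Z^{\R}(x)\cap U(\C)\bigr)$ rests on Proposition \ref{proweilconj} exactly as you say; the point $(x,\tau(x))$ lies over the generic, hence smooth, point of $V$, and smoothness descends to the real form; and the equality $\mathfrak q\cap\sO^K(U)=I_0$ correctly identifies the scheme-theoretic image of $p_1|_V$ with $Z(x)_K\cap U_K$, so that a dominant morphism carries the dense set $V_{\R}(\R)$ to a dense subset of $Z(x)\cap U$.
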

\begin{proof}
	It is clear that $Z^\R(x)\subseteq Z(x)$. After replacing $X_K$ by an irreducible affine open neighborhood of $\pi_K(x)$, we may assume that both $X_K$ and $X$ are affine. Let $h\in\sO(X)$ such that $h(Z^\R(x))=0$. Then we have
	$$h\otimes_\C 1\in\sO(X)\otimes_\C\tau(\sO(X))=\sC^{\R-\alg}(X)=\sC^{\R-\alg,L}(X)\otimes_K\C.$$
	View $\C$ as a vector space over $K$ and let $(e_j)_{j\in J}$ be a $K$-basis of $\C$. We may assume that $0\in J$ and $1=e_0$. Write $h\otimes_\C 1=\sum_{j\in J}g_je_j\in\sC^{\R-\alg,L}(X)\otimes_K\C$, where $g_j\in\sC^{\R-\alg,L}(X)$. Then $g_j(x)=0$ for every $j\in J$.
	
	Let $(f_n)_{n\in N}$ be a $K$-basis of $\sO^K(X)$. We may assume that $0\in N$ and $1=f_0$. Write
	$$g_j=\sum_{m,n\in N}b_{j,m,n}f_m\otimes_K\tau(f_n).$$
	Then we get
	$$h\otimes_\C 1=\sum_{j\in J, m,n\in N}b_{j,m,n}e_jf_m\otimes_K\tau(f_n).$$
	As $(e_jf_m\otimes\tau(f_n))_{j\in J, m,n\in N}$ forms a $K$-basis of
	$$\sC^{\R-\alg}(X)=\sO^K(X)\otimes_K\tau(\sO^K(X))\otimes_K\C,$$
	and $h\otimes_\C 1$ is contained in the image of $\sO(X)=\sO^K(X)\otimes_K\C$ in $\sC^{\R-\alg}(X)$, we have
	$$b_{j,m,n}=0$$
	for every $n\neq 0$. So $g_j=\sum_{m\in N}b_{j,m,0}f_m\otimes1\in\sO^K(X)$. Since $g_j(x)=0$, $g_j\mid_{Z(x)}=0$. Hence $h\mid_{Z(x)}=0$, which completes the proof.
\end{proof}

\begin{Lem}\label{lemvaluek}
	Assume that $X_K$ is affine. Let $h\in\sC^{\R-\alg,L}(X)$. For $x\in X(\C)$, if $h(x)\in K$, then $h$ is constant on $Z^\R(x)$.
\end{Lem}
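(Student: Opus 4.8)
The plan is to reduce the statement to the description of $Z^{\R}(x)$ in terms of vanishing of functions in $\sC^{\R-\alg,L}(U)$, by showing that the single function $h-h(x)$ lies in $\sC^{\R-\alg,L}(X)$ and vanishes at $x$. First I would note that since $X_K$ is affine, we may take $U=X$ in the concrete description preceding the lemma, so that
$$Z^{\R}(x)=\{y\in X(\C)\mid g(y)=0 \text{ for every } g\in \sC^{\R-\alg,L}(X) \text{ with } g(x)=0\}.$$
Now set $c:=h(x)$. By hypothesis $c\in K$, so the constant function $c$ lies in $\sO^K(X)\subseteq \sC^{\R-\alg,L}(X)$ (constants defined over $K$ are algebraic functions defined over $K$). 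Hence $g:=h-c\in \sC^{\R-\alg,L}(X)$, because $\sC^{\R-\alg,L}(X)$ is an $\R$-algebra (in fact a $K$-algebra via the isomorphism $\sC^{\R-\alg,L}(X)\simeq \sO^K(X)\otimes_K\tau(\sO^K(X))$) and is closed under subtraction. Moreover $g(x)=h(x)-c=0$.

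By the defining property of $Z^{\R}(x)$ just recalled, every $g\in \sC^{\R-\alg,L}(X)$ with $g(x)=0$ vanishes identically on $Z^{\R}(x)$; applying this to our $g=h-c$ gives $h\equiv c$ on $Z^{\R}(x)$, i.e. $h$ is constant (equal to $h(x)$) on $Z^{\R}(x)$. This is exactly the claim.

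There is essentially no obstacle here — the only point requiring a moment's care is the verification that $c=h(x)\in K$ genuinely gives a constant function in $\sO^K(X)$ rather than merely in $\sO(X)=\sC^{\R-\alg}(X)$; this is where the hypothesis $h(x)\in K$ (rather than $h(x)\in\C$) is used, and it is immediate from the fact that $\sO^K(X)=\pi_K^*(\sO(X_K))$ contains all scalars in $K$. The rest is just that $\sC^{\R-\alg,L}(X)$ is a ring containing $\sO^K(X)$ and that $Z^{\R}(x)$ is, by construction, the common zero locus of those elements of $\sC^{\R-\alg,L}(X)$ vanishing at $x$.
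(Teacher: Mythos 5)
Your proof is correct and follows essentially the same route as the paper: the paper decomposes $h=\pi_L^*(g_1)+i\pi_L^*(g_2)$ with $g_1,g_2\in\sO(R(X_K))$, uses $h(x)\in K=L+iL$ to conclude $g_i(\pi_L(\psi_\C(x)))\in L$, and then invokes the embedding $\sO(Y(x)_L)\hookrightarrow\R$ given by evaluation at the (generic) point $\pi_L(\psi_\C(x))$ — which is exactly the mechanism behind the displayed description of $Z^{\R}(x)$ as the common zero locus of functions in $\sC^{\R-\alg,L}(X)$ vanishing at $x$. Your observation that $h-h(x)$ stays in $\sC^{\R-\alg,L}(X)$ precisely because $h(x)\in K$ is the correct crux, and the argument is complete.
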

\begin{proof}
	Write $h=g\circ\psi_\C$, where $g\in\sO^L(R(X))\otimes_L K$. Write $g=\pi_L^*(g_1)+\pi_L^*(g_2)i$, where $g_1,g_2\in\sO(R(X_K))$. Since $h(x)\in K$, both $\pi_L^*(g_1)(\psi_\C(x))$ and $\pi_L^*(g_2)(\psi_\C(x))$ are in $L$. The map $\pi_L\mid_{\psi_\C(x)}:\psi_\C(x)\to Y(x)_L$ induces an injection
	$$(\pi_L\mid_{\psi_\C(x)})^*:\sO(Y(x)_L)\hookrightarrow\R.$$
	We view elements of $L$ as constant functions in $\sO(Y(x)_L)$ and note that
	$$(\pi_L\mid_{\psi_\C(x)})^*(L)=L\subseteq\R.$$
	For $i\in\{1,2\}$, we have $g_i\mid_{Y(x)_L}\in\sO(Y(x)_L)$. Since
	$$(\pi_L\mid_{\psi_\C(x)})^*(g_i\mid_{Y(x)_L})=\pi_L^*(g_i)(\psi_\C(x))\in L,$$
	the injectivity of $(\pi_L\mid_{\psi_\C(x)})^*$ implies that $g_i\mid_{Y(x)_L}\in L$ for $i=1,2$. So both $\pi_L^*(g_1)$ and $\pi_L^*(g_2)$ are constant on $Y(x)=\pi_L^{-1}(Y(x)_L)\subseteq R(X)$, with values in $L$. We conclude that $h=g\circ\psi_{\C}=(\pi_L^*(g_1)+\pi_L^*(g_2)i)\circ\psi_{\C}$ is constant on $Z^\R(x)$, since $Z^\R(x)=\psi_{\C}^{-1}(Y(x)(\R))$.
\end{proof}

\subsection{Moduli space of rational maps}\label{sectionmoduli}
For $d\geq 2$, let $\Rat_d$ be the space of degree $d$ endomorphisms on $\P^1$. It is a smooth irreducible affine variety of dimension $2d+1$ \cite{Silverman2012}. The group $\PGL_2=\Aut(\P^1)$ acts on $\Rat_d$ by conjugacy. The geometric quotient $$\sM_d:=\Rat_d/\PGL_2=\Spec(\sO(\Rat_d)^{\PGL_2})$$ is the (coarse) \emph{moduli space} of endomorphisms of degree $d$ on $\P^1$, which is an irreducible affine variety of dimension $2d-2$, see \cite{Silverman2012,Silverman2007} for a detailed treatment. Let $$\Psi:\Rat_d\to\sM_d$$ be the quotient morphism. Let $\FL_d\subseteq\Rat_d$ be the locus of flexible Latt\`es maps, which is Zariski closed in $\Rat_d$. Set $[\FL_d]:=\Psi(\FL_d)$. The flexible-Latt\`es locus $[\FL_d]$ is non-empty if and only if $d$ is a square, and in this case $[\FL_d]$ is equidimensional of dimension $1$.

The above construction works over any algebraically closed field of characteristic $0$ and commutes with base changes.

\medskip

In the proof of Theorem \ref{thmalglen}, we need to consider only repelling periodic points of rational maps, and the notion of moduli spaces with certain marked periodic points is necessary. We introduce such moduli spaces here. See \cite[\S 8]{ji2023dao} for more details.

For every $n\in\Z_{>0}$, we construct the space $\Rat_d[n]$ of endomorphisms of degree $d$ on $\P^1$ with a single marked $n$-periodic point as follows. It is the closed subvariety of $\Rat_d\times\P^1$ defined by $f_{\Rat_d}^n(g,z)=(g,z)$, where
$$f_{\Rat_d}:\Rat_d\times\P^1\to\Rat_d\times\P^1,(g,z)\mapsto(g,g(z))$$
is the universal rational map. The first projection
$$\phi_n:\Rat_d[n]\to\Rat_d$$
is a finite morphism of degree $d^n+1$. Let
$$\la_n:\Rat_d[n]\to\A^1, (f_t, x)\mapsto df_t^n(x)\in\A^1$$
be the morphism given by the multiplier at the marked periodic point. 

Next, we introduce the moduli space of endomorphisms of degree $d$ with a finite sequence of marked periodic points whose periods satisfy certain conditions. Let $s_1,\dots, s_n\in\Z_{\geq 0}$ be finitely many integers such that $s_1\leq\dots\leq s_n$ and $s_i\leq d^{i!}+1$ for $1\leq i\leq n$. We construct the space ${\rm R_d}(s_1,\dots, s_n)$ of rational maps of degree $d$ with $s_n$ marked $n!$-periodic points, such that among these, $s_{n-1}$ are already $(n-1)!$-periodic, $\dots$, and among these $s_2$ marked $2!$-periodic points, $s_1$ are already $1$-periodic (i.e., fixed), always counted with multiplicity, as follows. An $m$-periodic point of $f$ means a fixed point of $f^m$, and we do not require its exact period to be $m$; hence, an $m!$-periodic point is automatically $k!$-periodic for $k\geq m$. Consider the fiber product $(\Rat_d[n!])^{s_n}_{/\Rat_d}$ of $s_n$ copies of $\Rat_d[n!]$ over $\Rat_d$. For $i\neq j\in\{1,\dots, s_n\}$, let
$$\pi_{i,j}: (\Rat_d[n!])^{s_n}_{/\Rat_d}\to (\Rat_d[n!])^2_{/\Rat_d}$$
be the projection to the $i$-th and $j$-th coordinates. The diagonal $\Delta\subseteq (\Rat_d[n!])^2_{/\Rat_d}$ is an irreducible component of $(\Rat_d[n!])^2_{/\Rat_d}$. Consider the open subset
$$U:=(\Rat_d[n!])^{s_n}_{/\Rat_d}\setminus (\cup_{i\neq j\in\{1,\dots,s_n\}}\pi_{i,j}^{-1}(\Delta))$$
in which the $s_n$ marked points are pairwise distinct. Let $U^\prime$ be the subset of $U$ consisting of points $(f, x_1,\dots, x_{s_n})$ satisfying $f^{m!}(x_i)=x_i$ for all integers $1\leq m\leq n$ and $1\leq i\leq s_m$. This means that in $U^\prime$, the first $s_m$ points in the whole sequence of $s_n$ marked $n!$-periodic points are indeed $m!$-periodic, for all integers $1\leq m\leq n$. The set $U^\prime$ is open and closed in $U$. We then define ${\rm R_d}(s_1,\dots,s_n)$ to be the Zariski closure of $U^\prime$ in $(\Rat_d[n!])^{s_n}_{/\Rat_d}$. For all integers $0\leq m\leq n$, define
$$\phi_{n,m}: {\rm R_d}(s_1,\dots, s_n)\to {\rm R_d}(s_1,\dots, s_m):(f, x_1,\dots, x_{s_n})\mapsto (f, x_1,\dots, x_{s_m})$$
to be the restriction morphism, where $s_0:=0$ and when $m=0$ the morphism is given by
$$\phi_{n,0}: {\rm R_d}(s_1,\dots, s_n)\to\Rat_d:(f, x_1,\dots, x_{s_n})\mapsto f.$$
For all integers $0\leq m_1\leq m_2\leq n$, we have
$$\phi_{m_2,m_1}\circ\phi_{n,m_2}=\phi_{n,m_1}.$$
We define the multiplier morphism on ${\rm R_d}(s_1,\dots, s_n)$ to be the morphism
$$\la_{s_1,\dots,s_n}: {\rm R_d}(s_1,\dots, s_n)\to\A^{s_n}:(f, x_1,\dots,x_{s_n})\mapsto (df^{n!}(x_1),\dots, df^{n!}(x_{s_n})).$$

Recall that for a rational map $g(z)\in\C(z)$ of degree $\geq2$ and a fixed point $z_0\in\P^1(\C)$ of $g$, the multiplicity of the fixed point $z_0$ of $g$ is the order of $g(z)-z=0$ at $z_0$ when $z_0\neq\infty$, and we can define the multiplicity using a conjugation to move $z_0$ to $\C$ when $z_0=\infty$. For every $k\geq1$, it is clear that a fixed point $x$ of $f_t^k$ has multiplicity $1$ if and only if
\begin{equation}\label{multifixed}
	\la_k(f_t,x)=df_t^k(x)=\rho_{f^k}(x)\neq1.
\end{equation}
Then the projection $\phi_k:\Rat_d[k]\to\Rat_d$ is \'etale at every point $(f_t,x)\in\Rat_d[k]\setminus\la_k^{-1}(1)$. Thus we conclude that $\phi_{n,0}:{\rm R_d}(s_1,\dots,s_n)\to\Rat_d$ is \'etale at every point in $ (\la_{s_1,\dots,s_n})^{-1}((\A^1\setminus\{1\})^{s_n})$. 

Define
$$\sM_d(s_1,\dots,s_n):={\rm R_d}(s_1,\dots, s_n)/\PGL_2$$
to be the moduli space of endomorphisms of degree $d$ on $\P^1$ with $s_n$ marked $n!$-periodic points, such that among these, $s_{n-1}$ are already $(n-1)!$-periodic, $\dots$, and among these $s_2$ marked $2!$-periodic points, $s_1$ are already $1$-periodic (i.e., fixed), always counted with multiplicity. The moduli space $\sM_d(s_1,\dots,s_n)$ is affine, which makes the construction relatively easy. The morphisms $\phi_{n,m}$ and $\la_{s_1,\dots, s_n}$ descend to
$$[\phi_{n,m}]:\sM_d(s_1,\dots,s_n)\to\sM_d(s_1,\dots,s_m)$$
when $1\leq m\leq n$,
$$[\phi_{n,0}]:\sM_d(s_1,\dots,s_n)\to\sM_d,$$
and
$$[\la_{s_1,\dots, s_n}]:\sM_d(s_1,\dots, s_n)\to\A^{s_n}.$$
Then $[\phi_{n,0}]$ is \'etale at every point $x\in [\la_{s_1,\dots,s_n}]^{-1}((\A^1\setminus\{1\})^{s_n})$.

All the schemes $\Rat_d$, $\sM_d$, ${\rm R_d}(s_1,\dots, s_n)$, and $\sM_d(s_1,\dots,s_n)$ considered here are defined over $\Q$.

\subsection{Length maps}\label{sectionlengmap}
Recall that $K$ is an algebraically closed subfield of $\C$ which is invariant under complex conjugation. We use the terminology and notations from \S\ref{sectionweilrest}--\ref{sectionmoduli}.

For $d\geq 2$, let $s_1,\dots, s_n\in\Z_{\geq 0}$ be finitely many integers such that $s_1\leq\dots\leq s_n$ and $s_i\leq d^{i!}+1$ for $1\leq i\leq n$. Let
$$|\la_{s_1,\dots,s_n}|:\sM_d(s_1,\dots, s_n)(\C)\to\R_{\geq 0}^{s_n}$$
be the composition of
$$[\la_{s_1,\dots, s_n}]:\sM_d(s_1,\dots, s_n)(\C)\to\C^{s_n}$$
and the absolute value map
$$(a_1,\dots, a_{s_n})\in\C^{s_n}\mapsto (|a_1|,\dots, |a_{s_n}|)\in\R_{\geq 0}^{s_n}.$$
Define $$q_{s_1,\dots, s_n}:\sM_d(s_1,\dots, s_n)(\C)\to\R_{\geq 0}^{s_n}$$ to be the composition of
$$|\la_{s_1,\dots,s_n}|:\sM_d(s_1,\dots, s_n)(\C)\to\R_{\geq 0}^{s_n}$$
and the map
$$(a_1,\dots, a_{s_n})\in\R_{\geq 0}^{s_n}\mapsto(a_1^2,\dots, a_{s_n}^2)\in\R_{\geq 0}^{s_n}.$$
It is clear that
$$q_{s_1,\dots, s_n}\in\sC^{\R-\alg,L}(\sM_d(s_1,\dots, s_n)_\C).$$
Here the model of
$$\sM_d(s_1,\dots, s_n)_{\C}:=\sM_d(s_1,\dots, s_n)\times_{\Spec(\Q)}\Spec(\C)$$
over $K$ is taken to be
$$\sM_d(s_1,\dots, s_n)_K:=\sM_d(s_1,\dots, s_n)\times_{\Spec(\Q)}\Spec(K).$$

\medskip

By Lemma \ref{lemvaluek}, for every $x\in\sM_d(s_1,\dots, s_n)(\C)$, if $q_{s_1,\dots, s_n}(x)\in L^{s_n}$, then $q_{s_1,\dots, s_n}|_{Z^{\R}(x)}$ is constant. Hence for every $x\in\sM_d(s_1,\dots, s_n)(\C)$, if $|\la_{s_1,\dots,s_n}|(x)\in L^{s_n}$, then $|\la_{s_1,\dots,s_n}||_{Z^{\R}(x)}$ is constant. 

\subsection{Rigidity of length spectrum}\label{sectionlenrig}
In this subsection, we recall the rigidity of length spectrum proved by Ji-Xie \cite{Ji2023}.

Let $f$ be an endomorphism of $\P^1(\C)$ of degree $d\geq 2$. As in \cite[\S 8.3]{Ji2023}, the \emph{length spectrum}
$$L(f)=\{L(f)_n, n\geq 1\}$$
of $f$ is a sequence of finite multisets\footnote{A multiset is a set except allowing multiple instances for each of its elements. The number of instances of an element is called its multiplicity. For example: $\{a,a,b,c,c,c\}$ is a multiset of cardinality $6$, with multiplicities $2,1,3$ for $a,b,c$, respectively.}, where $L(f)_n:=L_n(f)$ is the multiset of lengths of all fixed points of $f^n$. In particular, $L(f)_n$ is a multiset of non-negative real numbers of cardinality $d^n+1$. For every $n\geq 1$, let $RL(f)_n$ be the sub-multiset of $L(f)_n$ consisting of all elements $>1$. We call
$$RL(f):=\{RL(f)_n, n\geq 1\}$$
the \emph{repelling length spectrum} of $f$ and its subsequence
$$RL^*(f):=\{RL^*(f)_n:=RL(f)_{n!}, n\geq 1\}$$
over the factorials $n!$ the \emph{main repelling length spectrum} of $f$. By definition, we have $d^n+1\geq\# RL(f)_n$ for every $n\geq 1$. Observe that the sequence of differences
$$(d^{n!}+1-\# RL^*(f)_n)_{n\geq1}$$
is non-decreasing, since every $n!$-periodic point is $(n+1)!$-periodic and the multiplicity of a repelling fixed point of $f^{n!}$ is $1$ by \eqref{multifixed}. Also, the sequence $(d^{n!}+1-\# RL^*(f)_n)_{n\geq1}$ is bounded from above, since every rational map on $\P^1(\C)$ of degree $d\geq2$ has only finitely many periodic cycles whose multiplier has absolute value $\leq1$ (see \cite[Theorem 13.1]{Milnor}). Since $L(f), RL(f)$, and $RL^*(f)$ are invariant under conjugacy, they descend to $\sM_d(\C)$. For every $[f]\in\sM_d(\C)$, define
$$L([f]):=L(f), RL([f]):=RL(f),RL^*([f]):=RL^*(f)$$
for any $f$ in the class $[f]$.

\medskip

Let $\Om$ be the set of sequences $\{A_n, n\geq 1\}$ of multisets consisting of real numbers of absolute values $>1$ satisfying $\#A_n\leq d^{n!}+1$ for $n\geq1$, and such that for every $a\in A_n$ with multiplicity $m\geq1$, we have $a^{n+1}\in A_{n+1}$ with multiplicity $\geq m$. For $A,B\in\Om$, we write $A\subseteq B$ if $A_n\subseteq B_n$ as multisets for every $n\geq 1$. An element $A=\{A_n\}\in\Om$ is called \emph{big} if $(d^{n!}+1-\#A_n)_{n=1}^\infty$ is bounded. For every endomorphism $f$ of $\P^1(\C)$ of degree $d$, we have $RL^*(f)\in\Om$ and it is big.

For a big $A\in\Om$, Ji-Xie proved the following rigidity result on length spectrum, which is a main ingredient in the proof of Theorem \ref{thmalglen}.

\begin{Thm}\cite[Theorem 8.25]{Ji2023}\label{thmbigspecrig}
If $A\in\Om$ is big, then the set
$$\{[f]\in\sM_d(\C)\setminus[\FL_d]: A\subseteq RL^*(f)\}$$
is finite.
\end{Thm}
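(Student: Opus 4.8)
\emph{Plan.} The plan is to realize $\{[f]\in\sM_d(\C):A\subseteq RL^*(f)\}$ as a descending intersection of admissible subsets of $\sM_d(\C)$ (viewed, via the Weil restriction, as the real points of the $\R$-variety $R(\sM_d)$), to invoke the descending chain condition (Theorem~\ref{thmNoetherianad}) to see that this set is itself admissible, and then to rule out positive-dimensional pieces by combining a pigeonhole argument with McMullen's rigidity of the multiplier spectrum \cite{McMullen1987}.

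\emph{Step 1: the descending chain.} Since $A$ is big I would fix $C$ with $d^{n!}+1-\#A_n\le C$ for all $n$ and set $s_k:=\#A_k$; as $a\mapsto a^{k+1}$ is injective on $(1,\infty)$, the compatibility in $\Om$ gives $\#A_{k+1}\ge\#A_k$, so $(s_1,\dots,s_n)$ is an admissible tuple and $\sM_d(s_1,\dots,s_n)$ of \S\ref{sectionmoduli} is defined. Let $O_n$ be the finite, non-empty (because $A\in\Om$) set of orderings of the multiset $A_n$ compatible with the filtration $s_1\le\dots\le s_n$. Since $q_{s_1,\dots,s_n}\in\sC^{\R-\alg,L}(\sM_d(s_1,\dots,s_n)_{\C})$ (here $K=\C$, $L=\R$), the level set $T_n\subseteq\sM_d(s_1,\dots,s_n)(\C)$ of $q_{s_1,\dots,s_n}$ over the finite set $\{(v_1^2,\dots,v_{s_n}^2):(v_1,\dots,v_{s_n})\in O_n\}$ corresponds, via the Weil restriction, to a Zariski closed subset of $R(\sM_d(s_1,\dots,s_n)_{\C})$. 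All multipliers occurring on $T_n$ have norm $>1$, hence are $\ne1$, so $[\phi_{n,0}]$ (and its Weil restriction) is étale along $T_n$; thus $W_n:=[\phi_{n,0}](T_n)\subseteq\sM_d(\C)$ is admissible (and closed, $[\phi_{n,0}]$ being finite). Truncating a marking gives $W_{n+1}\subseteq W_n$, and a direct verification — for ``$\supseteq$'' one marks, inductively and using $A\in\Om$ to respect the period filtration, $s_n$ pairwise distinct $n!$-periodic points realizing $A_n$; for ``$\subseteq$'' one reads $A_n$ off the marked data, using that repelling fixed points of $f^{n!}$ are simple by \eqref{multifixed} — shows $\bigcap_n W_n=\{[f]\in\sM_d(\C):A\subseteq RL^*(f)\}=:\widehat\Sigma(A)$. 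By Theorem~\ref{thmNoetherianad} the chain stabilizes, so $\widehat\Sigma(A)$ is admissible, hence semialgebraic.

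\emph{Step 2: many multipliers are forced to have constant norm.} It suffices to show $\widehat\Sigma(A)\setminus[\FL_d]$ is finite. Assume not; being an infinite semialgebraic set avoiding the (at most one-dimensional) locus $[\FL_d]$, it contains a connected real-analytic submanifold $M$ of dimension $\ge1$ whose complex Zariski closure $W$ is irreducible, with $\dim_\C W\ge1$ and $W\not\subseteq[\FL_d]$. After passing to a finite cover of a dense open of $W$, the $d^{n!}+1$ multipliers of $n!$-periodic points become regular functions $\rho_{n,1},\dots,\rho_{n,d^{n!}+1}$. For each $p\in M$ at most $2C$ of the indices $i$ satisfy $|\rho_{n,i}(p)|\notin\mathrm{supp}(A_n)$ (at most $C$ ``extra'' repelling points, at most $C$ of norm $\le1$); hence for any $2C+1$ indices the real-analytic function $\prod_j\prod_{a\in\mathrm{supp}(A_n)}(|\rho_{n,i_j}|^2-a^2)$ vanishes on $M$, and since real-analytic functions on the connected $M$ form an integral domain, for all but $\le2C$ indices $i$ the value $|\rho_{n,i}|$ lies in $\mathrm{supp}(A_n)$, hence is constant, on $M$.

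\emph{Step 3 and the main obstacle.} If $M$ is two-dimensional — for instance open in the Riemann surface $W(\C)$ when $\dim_\C W=1$ — the open mapping theorem (or the maximum principle in higher dimension) upgrades this to: for all but $\le2C$ indices $i$, $\rho_{n,i}$ is constant on $W$, at every level $n$. The cycles whose multiplier is non-constant on $W$ then form an increasing, uniformly bounded family, hence a fixed finite set $B$ of cycles of period dividing some $N_0!$; for $n\ge N_0$ the period-$n!$ multiplier spectrum of $f\in W$ is a fixed constant multiset together with $(\rho_{f^{N_0!}}(b)^{n!/N_0!})_{b\in B}$, so by McMullen's rigidity \cite{McMullen1987} and the Noetherianity argument recalled in the Introduction the full multiplier spectrum — and hence $[f]\in\sM_d\setminus[\FL_d]$ up to finitely many choices — is a function of the tuple $(\rho_{f^{N_0!}}(b))_{b\in B}\in\C^{\#B}$, which forces $\dim_\C W\le\#B$; one finishes by showing $B=\emptyset$. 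The two genuinely delicate points — which I expect to be the main obstacle — are: (i) reducing the possibly low-dimensional (``thin'') shape of $\widehat\Sigma(A)$ to the two-dimensional situation in which the open mapping theorem applies; and (ii) excluding a non-constant cycle multiplier ($B\ne\emptyset$) — such a cycle would have unbounded, non-constant multiplier norm on $W$ and would therefore have to occupy, for every $n$, one of the at most $C$ ``free slots'' $RL^*(f)_n\setminus A_n$, which the one-sided compatibility $A_n\mapsto A_{n+1}$ together with the bound $d^{n!}+1-\#A_n\le C$ does not permit. It is precisely for handling these phenomena in the real-algebraic category that the Weil restriction, admissible subsets, and transcendental points of \S\ref{sectionweilrest}--\S\ref{sectiontranpoint} are introduced.
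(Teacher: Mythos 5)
First, note that the paper does not prove this statement: it is quoted verbatim from \cite[Theorem 8.25]{Ji2023}, and in \cite{Ji2023} it rests on the length-spectrum rigidity theorem (\cite[Theorem 1.5]{Ji2023}), a deep result proved by analysing multipliers along homoclinic orbits. Your Step 1 is sound and is essentially the same machinery the present paper deploys in \S\ref{sectionmoduli}--\ref{sectionalglenpf}: marked moduli spaces, the real-algebraic level sets of $q_{s_1,\dots,s_n}$, \'etaleness away from multiplier $1$, admissibility of the images under the finite map $[\phi_{n,0}]$, and stabilization via Theorem~\ref{thmNoetherianad}. That correctly reduces the theorem to showing that an admissible (hence semialgebraic) subset of $\sM_d(\C)\setminus[\FL_d]$ on which a big length spectrum is prescribed cannot be infinite.

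The gap is exactly the obstacle you flag as (i), and it is not a technical loose end but the entire mathematical content of the theorem. An infinite semialgebraic set need only contain a real-analytic arc $M$ with $\dim_{\R}M=1$, and such an arc need not be open in any complex curve, so the open mapping theorem never enters: on a real one-parameter family the constancy of $|\rho_{n,i}|$ is perfectly compatible with $\rho_{n,i}(t)=a\,e^{i\theta(t)}$ rotating nontrivially. Consequently your Step 3 cannot conclude that the multipliers themselves are constant, and McMullen's rigidity \cite{McMullen1987} --- which needs the full multiplier spectrum, not its norms --- is not applicable. This is precisely why the authors stress in the introduction that the spread-out argument via McMullen fails for lengths (the squared length map is real semialgebraic but not algebraic), and why \cite{Ji2023} had to prove length-spectrum rigidity by entirely different means. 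Your point (ii) (excluding a non-constant cycle occupying the $\le C$ free slots) is likewise asserted but not argued; the ``one-sided compatibility'' of $\Om$ does not by itself forbid a single cycle whose length is non-constant from sitting in $RL^*(f)_n\setminus A_n$ for all $n$. In short: Step 1 is a correct and faithful reduction, but Steps 2--3 reduce the theorem to a statement (norm-rigidity on real-analytic families implies multiplier rigidity) that is false as stated for real one-dimensional families and whose correct substitute is \cite[Theorem 1.5]{Ji2023} itself.
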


\subsection{Proof of Theorem \ref{thmalglen}}\label{sectionalglenpf}
Let $f:\P_{\C}^1\to\P_{\C}^1$ be a rational map of degree $d\geq 2$. Assume that $f$ is not a flexible Latt\`es map and that for every $x\in\Per(f)(\C)$, $|\rho_f(x)|\in K$. We want to show that $[f]\in\sM_d(\C)$ is not transcendental over $K$ for the model $(\sM_d)_K$. Now assume, for contradiction, that $[f]$ is transcendental.

Set $A:=RL^*(f)\in\Om$, which is big. Set $s_n:=\#A_n$ for $n\geq1$. We may pick a sequence of periodic points $(x_i)_{i=1}^\infty$ such that for every $n\geq 1$, the first $s_n$ points $x_1,\dots, x_{s_n}$ are fixed by $f^{n!}$, and we have
$$A_n=\left\lbrace\left|\rho_{f^{n!}}(x_i)\right|: i=1,\dots, s_n\right\rbrace.$$
Let $[f_n]\in\sM_d(s_1,\dots,s_n)(\C)$ be the point represented by $(f,x_1,\dots,x_{s_n})$. It is clear that $[\phi_{n,0}]([f_n])=[f]$ for every $n\geq 1$.

Since $[f]$ is transcendental, for every $n\geq 1$, $[f_n]$ is also transcendental. By Lemma \ref{lemcomparezzr}, we get $\dim_{\R}(Z^{\R}([f_n]))\geq 1$ for every $n\geq 1$. Our assumption implies that
$$|\la_{s_1,\dots, s_n}|([f_n])\in L^{s_n},n\geq1.$$
Then from the last paragraph of \S \ref{sectionlengmap}, we see that $|\la_{s_1,\dots, s_n}|$ is constant on $Z^{\R}(f_n)$. As $|\la_{s_1,\dots, s_n}|([f_n])\in (1,+\infty)^{s_n}$, the morphism $[\phi_{n,0}]$ is \'etale in a neighborhood of $Z^{\R}([f_n])$. Since $[\phi_{n,0}]$ is a finite morphism, the subset
$$V_n:=[\phi_{n,0}](Z^{\R}([f_n]))$$
is closed in $\sM_d(\C)$ and it is an admissible subset of $\sM_d(\C)$. Moreover, by Lemma \ref{lemmorphismzx}, the sequence $(V_n)_{n\geq 1}$ is decreasing. By Theorem \ref{thmNoetherianad}, there is $N\geq 1$ such that $V_n=V_N$ for all $n\geq N$.

Then for every $g\in V_N$, we have $A\subseteq RL^*([g])$. Since $[f]\not\in [\FL_d]$, the set $Z^{\R}([f_N])$ is real irreducible with $\dim_{\R}Z^{\R}([f_N])\geq 1$, so $V_N\cap (\sM_d(\C)\setminus [\FL_d])$ is infinite. This contradicts Theorem \ref{thmbigspecrig}, which completes the proof.
\qed

\section{An equidistribution theorem}\label{sectionequid}
Equidistribution-type results have played a significant role in arithmetic geometry and arithmetic dynamics over the past decades. In this section, we state and prove a reformulation (Theorem \ref{equid}) of Yuan's equidistribution theorem \cite[Theorem 3.1]{Yuan2008}. We present this theorem because existing equidistribution theorems in the literature (e.g., \cite{Yuan2008}) typically handle the Galois orbit of a single small point at a time, which is insufficient for our proof of Theorem \ref{thmmainnumf}. Our statement is slightly stronger than \cite[Theorem 3.1]{Yuan2008} as we work with sequences of (weighted) countable Galois-invariant subsets, which is particularly useful for applications in dynamics. The proof of Theorem \ref{equid} is based on applying Yuan's original equidistribution theorem \cite[Theorem 3.1]{Yuan2008}. We follow the terminology in \cite{Yuan2008}.

For the convenience of readers, we briefly review some basic notions in Arakelov geometry. More details can be found in \cite{Zhang1995,Yuan2008,MoriwakiAra,YZ21}.

Let $K$ be a number field, $X$ a projective variety of dimension $n-1$ over $K$, and $\sL$ a line bundle on $X$. We fix an embedding $\overline{K}\to\C_v$ for each place $v\in\sM_K$ of $K$. An \emph{adelic metric} on $\sL$ is a $\C_v$-norm $\|\cdot\|_v$ on the fiber $\sL_{\C_v}(x)$ for every $x\in X(\overline{K})$ and $v\in\sM_K$, satisfying certain continuity and coherence conditions; see \cite[\S 1]{Zhang1995} for a precise definition.

An adelic metric on $\sL$ is called an \emph{algebraic metric} (or a \emph{model metric}) if it is induced by an $\sO_K$-model $(\sX,\tilde{\sL})$ of $(X,\sL^e)$ for some $e\in\Z_{>0}$ (i.e., $\sX$ is an integral scheme projective and flat over $\sO_K$, and $\tilde{\sL}$ is a Hermitian line bundle over $\sX$ such that the generic fiber of $(\sX,\tilde{\sL})$ is $(X,\sL^e)$). Here, the induced metric at an archimedean place is induced by the Hermitian metric of $\tilde{\sL}$, while for a non-archimedean place $v$ and $x\in X(\overline{K})$, the induced metric on $\sL_{\C_v}(x)$ is determined by the lattice $(\tilde{x}^*\tilde{\sL}_{\sO_{\C_v}})^{1/e}$ in $\sL_{\C_v}(x)$, where $\tilde{x}:\Spec(\sO_{\C_v})\to\sX_{\sO_{\C_v}}$ is the extension of $x$. The metric induced by such a model always satisfies the conditions of an
adelic metric; see \cite[\S 1]{Zhang1995}. The algebraic metric is called \emph{semipositive} if $\tilde{\sL}$ has semipositive curvature at all archimedean places and non-negative degree on every vertical curve of $\sX$.

An (adelic) metric on $\sL$ is called \emph{semipositive} if it is the uniform limit of a sequence of semipositive algebraic metrics on $\sL$. Assume that $\sL$ is ample and equipped with a semipositive (adelic) metric (which always exists). The resulting metrized line bundle is denoted by $\overline{\sL}$. Then for every closed subvariety $Y$ of $X_{\overline{K}}$, we can define its $\overline{\sL}$-height $h_{\overline{\sL}}(Y)$ by $$h_{\overline{\sL}}(Y)=\frac{\hat{c}_1(\overline{\sL}|_{\overline{Y}})^{\dim(Y)+1}}{(\dim(Y)+1)\deg_{\sL}(\overline{Y})},$$ where $\overline{Y}$ is the image of $Y\to X_{\overline{K}}\to X$ (i.e., the closed subvariety of $X$ corresponding to the Galois orbit of $Y$), and $\hat{c}_1(\overline{\sL}|_{\overline{Y}})^{\dim(Y)+1}$ is the arithmetic self-intersection number of $\overline{\sL}|_{\overline{Y}}$ on $\overline{Y}$. In particular, for every algebraic point $x\in X(\overline{K})$, we get its height $h_{\overline{\sL}}(x)$, and the function $h_{\overline{\sL}}:{X(\overline{K})}\to\R$ is a Weil height associated to $\sL$ (up to a normalizing factor $[K:\Q]$). For an algebraic point $x\in X(\overline{K})$, we have $$h_{\overline{\sL}}(x)=\frac{1}{\deg(x)}\sum_{v\in\sM_K}\sum_{z\in O(x)}(-\log\|s(z)\|_v),$$
where $O(x)$ is the Galois orbit of $x$ and $s\in\Gamma(X,\sL)$ is a section non-vanishing on $O(x)$.

\begin{Thm}\label{equid}
	Let $K\subset\overline{\Q}$ be a number field and $X$ be a projective variety over $K$. Fix an embedding of $\overline{\Q}$ into $\C$. Let $\overline{\sL}$ be a metrized line bundle on $X$ such that $\sL$ is ample and the metric is semipositive. Let $\mu:=\deg_{\sL}(X)^{-1}c_1(\overline{\sL})^{\dim(X)}_{\C}$ be the canonical probability measure on $X(\C)$ associated to $\overline{\sL}$. For $n\in\Z_{>0}$, let $S_n$ be a countable subset of $X(\overline{\Q})$ which is $\Gal(\overline{\Q}/K)$-invariant. For every $n\geq1$ and $y\in S_n$, fix a real number $a_{n,y}>0$ such that $\sum_{y\in S_n}a_{n,y}=1$ and $a_{n,y}=a_{n,\sigma(y)}$ for all $n\geq1$, $y\in S_n$, and $\sigma\in\Gal(\overline{\Q}/K)$. Assume that the following two conditions hold:
	
	(1) (small) $\sum_{y\in S_n}a_{n,y}h_{\overline{\sL}}(y)\to h_{\overline{\sL}}(X)$ as $n\to +\infty$;
	 
	(2) (generic) for every proper subvariety $V\subsetneqq X$, $\sum_{y\in S_n\cap V}a_{n,y}\to0$ as $n\to +\infty$.
	
	Then the measure $\mu_n:=\sum_{y\in S_n}a_{n,y}\delta_y$ converges weakly to $\mu$ on $X(\C)$ as $n\to+\infty$, where $\delta_y$ denotes the Dirac measure at the point $y$; i.e., for every continuous function $g$ on $X(\C)$ (with the complex topology), we have
	\begin{equation}\label{2.3}
		\lim\limits_{n\to\infty}\sum_{y\in S_n}a_{n,y}g(y)=\int_{X(\C)}g d\mu.
	\end{equation}
\end{Thm}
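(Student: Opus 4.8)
The plan is to deduce Theorem \ref{equid} from Yuan's equidistribution theorem \cite[Theorem 3.1]{Yuan2008}, which is the special case in which each $S_n$ is a single Galois orbit $O(x_n)$ of an algebraic point with the uniform weights $a_{n,y}=1/\deg(x_n)$, and $(h_{\overline{\sL}}(x_n))_n$ is small and $(x_n)_n$ is generic. So the whole content is a reduction: from a sequence of weighted countable Galois-invariant sets to a sequence of single orbits. First I would observe that since all test functions $g$ in \eqref{2.3} are continuous and bounded (as $X(\C)$ is compact), and since finite sums approximate the series $\sum_{y\in S_n}a_{n,y}g(y)$ uniformly in $n$ once we discard a tail of total weight $<\varepsilon$, it suffices to prove the convergence \eqref{2.3} along suitable finite Galois-invariant truncations of the $S_n$. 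More precisely, for each $n$ I would choose a finite $\Gal(\overline{\Q}/K)$-invariant subset $S_n'\subseteq S_n$ with $\sum_{y\in S_n'}a_{n,y}\geq 1-1/n$; since the weights are Galois-equivariant, $S_n'$ can be taken to be a union of finitely many full Galois orbits, and the truncated measures $\mu_n' := \sum_{y\in S_n'}a_{n,y}\delta_y$ satisfy $\|\mu_n-\mu_n'\|\to 0$ in total variation, hence it is enough to show $\mu_n'\to\mu$ weakly.

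Next I would reduce a weighted finite sum of Dirac masses on Galois orbits to a single Galois orbit by the standard device of replacing a point by a high-multiplicity "multiple" of it, i.e. passing to a product variety or, more elementarily, approximating the rational weights $a_{n,y}$ (which we may assume rational after a further harmless perturbation of total variation $o(1)$) by a common denominator $M_n$ and viewing $a_{n,y}=m_{n,y}/M_n$; then $\mu_n'$ is the normalized counting measure of the multiset in which each Galois orbit $O(x)\subseteq S_n'$ is repeated $m_{n,x}$ times. To feed this into Yuan's theorem verbatim one considers, for each $n$, the disjoint union / symmetric-product construction: form a single algebraic point $x_n$ over $K$ whose Galois orbit, counted with multiplicity, is exactly this multiset — concretely, take $x_n$ to be a point of a symmetric power $\mathrm{Sym}^{N_n}X$ or simply argue with an auxiliary point on $X^{N_n}$ equipped with the product metrized line bundle $\overline{\sL}^{\boxtimes N_n}$, whose canonical measure is $\mu^{\otimes N_n}$ and whose height of the whole space is $N_n\, h_{\overline{\sL}}(X)$. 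Condition (1) then says the normalized heights of these auxiliary points converge to the height of the ambient space (smallness), and condition (2) — applied to preimages of proper subvarieties of $X$ under the coordinate projections — gives genericity of the sequence of auxiliary points in $X^{N_n}$ (a proper subvariety of $X^{N_n}$ is contained in a union of sets of the form $\mathrm{pr}_j^{-1}(V)$ up to codimension bookkeeping; this needs a small argument). Applying \cite[Theorem 3.1]{Yuan2008} to $(X^{N_n}, \overline{\sL}^{\boxtimes N_n})$ and the points $x_n$ yields weak convergence of the Galois orbit measures of $x_n$ to $\mu^{\otimes N_n}$, and pushing forward under any coordinate projection $\mathrm{pr}_j$ recovers exactly $\mu_n'\to\mu$.

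The main obstacle I expect is the verification of the genericity condition in the auxiliary product space: condition (2) is stated only for proper subvarieties $V\subsetneqq X$ of the original $X$, whereas Yuan's theorem requires that the auxiliary points avoid every proper subvariety of the product $X^{N_n}$ asymptotically, and the dimension $N_n$ itself grows with $n$. One must argue that a proper subvariety $W\subsetneqq X^{N_n}$ is "controlled" coordinate-wise — e.g. its image under at least one projection $\mathrm{pr}_j$ is proper in $X$, or $W$ is contained in a diagonal, which would force a collision of Galois conjugates that cannot persist generically — and then combine this with (2) and with the fact that the number of coincidences among the finitely many chosen orbits is negligible. Handling diagonals and the growth of $N_n$ simultaneously is the delicate point; a cleaner route, which I would ultimately prefer to write up, is to avoid the product construction entirely and instead re-run Yuan's variational argument directly: the core of \cite[Theorem 3.1]{Yuan2008} is an inequality comparing $\int g\, d\mu_n$ with heights via perturbing the metric on $\overline{\sL}$ by $tg$, and that inequality is additive in the point, hence extends immediately to finite weighted sums of Galois orbits; smallness (1) and genericity (2) then plug in with no need to pass to products. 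I would carry the proof out along this second line — restate the height-perturbation inequality of \cite{Yuan2008} for our weighted measures $\mu_n$, take the limit using (1), apply the same argument to $-g$, and conclude \eqref{2.3} — relegating the truncation argument of the first paragraph to a preliminary reduction.
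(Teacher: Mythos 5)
Your proposal ends up hedging between two routes, and the one you say you would actually write up (re-running Yuan's variational argument for weighted measures) is viable but genuinely different from the paper's; the one you develop in most detail (the product construction) has a real gap. On the product route: Yuan's theorem is an asymptotic statement about a \emph{sequence} of points on a \emph{fixed} polarized variety, so ``applying \cite[Theorem 3.1]{Yuan2008} to $(X^{N_n},\overline{\sL}^{\boxtimes N_n})$ and the points $x_n$'' is not a legitimate application when $N_n$ varies with $n$ --- there is no sequence on any fixed space to equidistribute. Even for fixed $N$, genericity in $X^{N}$ does not follow from condition (2): a proper subvariety of $X^{N}$ (a diagonal, a graph of a correspondence) can surject onto every factor, and the auxiliary points are built from Galois conjugates of one another, so collisions with such subvarieties are not controlled by (2). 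You flag this as ``delicate,'' but it is the reason the construction fails rather than a technicality. Your fallback route --- observing that the height-perturbation inequality at the core of Yuan's proof is additive in the point and hence extends to weighted countable Galois-invariant sums, with (1) and (2) plugged in --- is correct in outline, though it requires re-opening Yuan's proof and, in particular, a uniform lower bound $h_{\overline{\sL}}(y)\ge h_{\overline{\sL}}(X)-M$ (from ampleness) to control the mass sitting on the exceptional subvarieties; you do not mention this, but it would surface in the write-up.

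The paper instead keeps \cite[Theorem 3.1]{Yuan2008} as a black box and argues by contradiction and pigeonhole: decompose each $S_n$ into Galois orbits; assume some continuous $g$ with $\int g\,d\mu=0$ has $\int g\,d\mu_n\ge c>0$; exhaust the proper subvarieties over $\overline{\Q}$ by an increasing sequence $V_1\subseteq V_2\subseteq\cdots$ chosen (via Zhang's theorem on successive minima) so that points outside $V_n$ have height $\ge h_{\overline{\sL}}(X)-1/n^2$; then conditions (1), (2) together with the lower bound $h\ge h_{\overline{\sL}}(X)-M$ show that the orbits which both avoid $V_n$ and have height $\le h_{\overline{\sL}}(X)+1/n$ carry mass tending to $1$. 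Pigeonholing over these orbits produces, for each $n$, a single orbit $O_{n,i_n}$ with $\int g\,d\mu_{n,i_n}\ge c/2$, and the sequence $(O_{n,i_n})_n$ is small and generic by construction, so Yuan's theorem forces $\int g\,d\mu_{n,i_n}\to\int g\,d\mu=0$, a contradiction. This buys a proof that quotes Yuan's result verbatim; your preferred route buys a more self-contained argument at the cost of redoing the arithmetic bigness/perturbation step. If you pursue your route, drop the product construction entirely and supply the essential-minimum lower bound for the weighted averages explicitly.
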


\begin{proof}
For every $n\geq 1$, write $S_n=\sqcup_{i=1}^{m_n} O_{n,i}$, where $O_{n,i}$ are the distinct Galois orbits contained in $S_n$ and $m_n\in\Z_{>0}\cup\{\infty\}$. 

In \cite[Theorem 3.1]{Yuan2008}, Yuan proved the theorem for the case where each $S_n$ ($n\geq1$) consists of a single Galois orbit, i.e., $m_n=1$ for every $n\geq1$.

\medskip

By contradiction, after passing to a subsequence of $(S_n)$, we may assume that there is a continuous (real-valued) function $g$ on $X(\C)$ and a constant $c\in\R_{>0}$ such that $\int_{X(\C)} g d\mu=0$ and 
\begin{equation}\label{equcongec}
	\int_{X(\C)}g d\mu_n\geq c>0
\end{equation}
for every $n\geq 1$.

As $a_{n,y}$ is constant on Galois orbits, we can define $a_{n,i}:=a_{n,y}$ for an arbitrary $y\in O_{n,i}$, for all $n\geq1$ and $1\leq i\leq m_n$. For all $n\geq1$ and $1\leq i\leq m_n$, set $\mu_{n,i}:=\sum_{y\in O_{n,i}}\delta_y$. Then we have 
$$\sum_{i=1}^{m_n}a_{n,i}(\#O_{n,i})=1\quad\text{and}\quad\mu_n=\sum_{i=1}^{m_n}a_{n,i}\mu_{n,i}$$
for every $n\geq1$. As $h_{\overline{\sL}}(y)$ is constant on Galois orbits, we can define $h_{n,i}:=h_{\overline{\sL}}(y)$ for an arbitrary $y\in O_{n,i}$, for all $n\geq1$ and $1\leq i\leq m_n$.

The small condition (1) implies that $\sum_{i=1}^{m_n}a_{n,i}(\#O_{n,i})h_{n,i}\to h_{\overline{\sL}}(X)$ as $n\to+\infty$. After taking a suitable subsequence, we may assume that
\begin{equation}\label{equationsmall}
	\sum_{i=1}^{m_n}a_{n,i}(\#O_{n,i})h_{n,i}\leq h_{\overline{\sL}}(X)+\frac{1}{n^2}
\end{equation} for every $n\geq 1$.

As $X$ has only countably many subvarieties defined over $\overline{\Q}$, there is an increasing sequence $V_1\subseteq V_2\subseteq\cdots\subseteq V_n\subseteq\cdots\subsetneqq X$ of proper subvarieties of $X$ defined over $K$ such that for every proper subvariety $V$ of $X$ defined over $\overline{\Q}$, there exists $N=N(V)\in\Z_{>0}$ with $V\subseteq V_n$ for all integers $n\geq N$. (By considering the Galois orbits of subvarieties, we may assume that the $V_n$ are defined over $K$.) By Zhang's theorem of successive minima \cite[Theorem 1.10]{Zhang1995}, we may further assume that
\begin{equation}\label{houtVn}
	h_{\overline{\sL}}(y)\geq h_{\overline{\sL}}(X)-\frac{1}{n^2}
\end{equation}
for every $n\geq1$ and $y\in X(\overline{\Q})\setminus V_n$. Set
$$G_n:=\{i\in[1,m_n]\cap\Z: O_{n,i}\cap V_n=\emptyset\}$$
and
$$B_n:=([1,m_n]\cap\Z)\setminus G_n=\{i\in[1,m_n]\cap\Z: O_{n,i}\subseteq V_n\},$$
for $n\geq1$. (Note that for every subvariety $V$ of $X$ defined over $K$, either $O_{n,i}\cap V=\emptyset$ or $O_{n,i}\subseteq V$.)

The generic condition (2) implies that after passing to a subsequence of $(S_n)$, we may assume that
$$1-\frac{1}{n^2}\leq\sum_{i\in G_n}a_{n,i}(\#O_{n,i})\leq 1,$$
for every $n\geq1$. Equivalently,
\begin{equation}\label{sumBn}
	0\leq\sum_{i\in B_n}a_{n,i}(\#O_{n,i})\leq\frac{1}{n^2},
\end{equation} for every $n\geq1$.

Since $\sL$ is ample, there is a constant $M\in\R_{\geq0}$ such that
\begin{equation}\label{Mbd}
	h_{\overline{\sL}}(y)\geq h_{\overline{\sL}}(X)-M
\end{equation}
for every $y\in X(\overline{\Q})$.

By \eqref{equationsmall}, \eqref{Mbd}, and \eqref{sumBn}, we get
\begin{align*}
	\sum_{i\in G_n}a_{n,i}(\#O_{n,i})h_{n,i}&\leq h_{\overline{\sL}}(X)+\frac{1}{n^2}-\sum_{i\in B_n}a_{n,i}(\#O_{n,i})h_{n,i}\\
	&\leq h_{\overline{\sL}}(X)+\frac{1}{n^2}-\sum_{i\in B_n}a_{n,i}(\#O_{n,i})(h_{\overline{\sL}}(X)-M)\\
	&= (\sum_{i\in G_n}a_{n,i}(\#O_{n,i}))h_{\overline{\sL}}(X)+\frac{1+Mn^2\sum_{i\in B_n}a_{n,i}(\#O_{n,i})}{n^2}\\
	&\leq (\sum_{i\in G_n}a_{n,i}(\#O_{n,i}))h_{\overline{\sL}}(X)+\frac{1+M}{n^2}.
\end{align*}
Set $H_n:=\{i\in G_n:h_{n,i}\leq h_{\overline{\sL}}(X)+1/n\}$ and $I_n:=G_n\setminus H_n$ for $n\geq1$. By \eqref{houtVn}, we have
\begin{align*}
	\frac{1+M}{n^2}&\geq\sum_{i\in G_n}a_{n,i}(\#O_{n,i})(h_{n,i}-h_{\overline{\sL}}(X))\geq-\frac{\sum_{i\in H_n}a_{n,i}(\#O_{n,i})}{n^2}+\frac{\sum_{i\in I_n}a_{n,i}(\#O_{n,i})}{n}\\
	&\geq -\frac{1}{n^2}+\frac{\sum_{i\in I_n}a_{n,i}(\#O_{n,i})}{n}.
\end{align*} 
Therefore, 
$$\sum_{i\in I_n}a_{n,i}(\#O_{n,i})\leq\frac{2+M}{n}$$
for every $n\geq1$. It follows that $$\sum_{i\in H_n}a_{n,i}(\#O_{n,i})\geq1-\frac{1}{n^2}-\frac{2+M}{n}\to 1$$ as $n\to+\infty$.

Set $\mu_n^\prime:=\sum_{i\in H_n}a_{n,i}\mu_{n,i}$ for $n\geq1$. Then $\mu_n-\mu_n^\prime$ is a (positive) measure on $X(\C)$ whose total mass tends to $0$ as $n\to+\infty$. After removing finitely many terms, we may assume that $\int_{X(\C)}g d\mu_n^\prime\geq c/2$ for every $n\geq 1$, since $g$ is bounded on the compact set $X(\C)$. Hence there is $i_n\in H_n$ such that $\int_{X(\C)}g d\mu_{n,i_n}\geq c/2$; fix such an $i_n$ for each $n\geq1$. Since $O_{n,i_n}\cap V_n=\emptyset$, the sequence $(O_{n,i_n})_n$ of Galois orbits satisfies the generic condition (2) by construction of $(V_n)$. By \eqref{houtVn} and the definition of $H_n$, we have
$$h_{\overline{\sL}}(X)-\frac{1}{n^2}\leq h_{n,i_n}\leq h_{\overline{\sL}}(X)+\frac{1}{n}$$
for every $n\geq 1$, i.e., the small condition (1) holds for $(O_{n,i_n})_n$. Applying Yuan's equidistribution theorem \cite[Theorem 3.1]{Yuan2008} to $(O_{n,i_n})_n$, we get $\mu_{n,i_n}\to\mu$ as $n\to+\infty$. Thus,
$$0<\frac{c}{2}\leq\lim_{n\to\infty}\int_{X(\C)}g d\mu_{n,i_n}=\int_{X(\C)}g d\mu=0,$$
which is a contradiction.
\end{proof}

\begin{Rem}
	For a place $v\in\sM_K$, we can consider two canonical analytic $v$-spaces of $X$: the $\C_v$-analytic space $X^{an}_{\C_v}$ and the $K_v$-analytic space $X^{an}_{K_v}$. We refer to the former as the geometric case and the latter as the algebraic case; see \cite[\S 3.1]{Yuan2008}. Theorem \ref{equid} deals with the geometric case at an archimedean place. The same idea applies to non-archimedean places or the algebraic case, which gives the full analogy of \cite[Theorems 3.1 and 3.2]{Yuan2008}.
\end{Rem}

To verify the ``generic'' condition in Theorem \ref{equid}, we need the following lemma. The proof uses ergodic theory with respect to the constructible topology (on algebraic varieties) introduced by Xie \cite{Xie2023}. 
\begin{Lem}\label{pergene}
	Let $K$ be a number field and $X$ be a projective variety over $K$. Given a dominant endomorphism $f:X\to X$ and a sequence $(x_n)_{n=1}^\infty$ of pairwise distinct $f$-periodic points in $X(\overline{K})$. Assume that $(x_n)_{n=1}^\infty$ is generic in $X$, i.e., every proper Zariski closed subset $Z\subsetneqq X$ contains only finitely many $x_n$. Then for every proper subvariety $V\subsetneqq X$, we have
	\begin{equation}\label{2.4}
		\frac{\#(V\cap O_f(x_n))}{\# O_f(x_n)}\to0,\text{ as }n\to+\infty,
	\end{equation}where $O_f(x_n)$ is the (forward) orbit of $x_n$ under $f$.
\end{Lem}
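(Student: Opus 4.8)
The plan is to argue by contradiction, using the constructible topology on $X$ together with the ergodic theory developed in \cite{Xie2023}, and to study a weak limit of the uniform probability measures carried by the periodic cycles $O_f(x_n)$.

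Suppose the conclusion fails; then there are $\varepsilon>0$ and infinitely many indices $n$ with $\#(V\cap O_f(x_n))\geq\varepsilon\cdot\#O_f(x_n)$, and I restrict to these $n$. Give $X$ the constructible topology: it becomes a compact Hausdorff space on which $f$ acts continuously (preimages of constructible sets are constructible), and in which every locally closed subvariety --- $V$ in particular --- is clopen. For each $n$, let $\mu_n$ be the uniform probability measure supported on the finite set $O_f(x_n)$; since $f$ permutes $O_f(x_n)$ cyclically, $\mu_n$ is $f$-invariant, and $\mu_n(V)=\#(V\cap O_f(x_n))/\#O_f(x_n)\geq\varepsilon$. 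By weak-$*$ compactness of the space of probability measures on the compact space $X$, after passing to a subnet the $\mu_n$ converge to an $f$-invariant probability measure $\mu$ on $X$, and since $V$ is clopen we have $\mu(V)\geq\varepsilon>0$.

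I would then record the elementary consequence of genericity: $\mu(W)=0$ for every proper Zariski-closed forward-$f$-invariant subvariety $W\subsetneq X$. Indeed, for such a $W$ and any $n$, the intersection $W\cap O_f(x_n)$ is a forward-$f$-invariant subset of the single periodic cycle $O_f(x_n)$, hence is either empty or all of $O_f(x_n)$; in the latter case $x_n\in W$. As $W\subsetneq X$ is proper Zariski-closed and $(x_n)$ is generic, $x_n\in W$ for only finitely many $n$, so $\mu_n(W)=0$ for $n$ large and therefore $\mu(W)=\lim_n\mu_n(W)=0$.

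The crux --- and the step I expect to be the main obstacle --- is to upgrade this to the statement that $\mu$ is \emph{generic}, i.e. $\mu(Z)=0$ for every proper Zariski-closed $Z\subsetneq X$; once this is known, $\mu(V)=0$ contradicts $\mu(V)\geq\varepsilon>0$, and since $\varepsilon$ was arbitrary we conclude that $\#(V\cap O_f(x_n))/\#O_f(x_n)\to0$. This upgrade is exactly what the ergodic theory of the constructible topology of \cite{Xie2023} is for: decomposing $\mu$ into ergodic components reduces the question to ergodic $f$-invariant measures on the constructible space, and the structural description of these in \cite{Xie2023} --- the Zariski closure of the support of such a measure is a forward-$f$-invariant subvariety --- combined with the vanishing of the previous paragraph forces genericity. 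Some such structural input is unavoidable: the naive candidate $W=\bigcup_{s\geq0}\overline{f^{s}(V)}$ does not help, since the forward $f$-orbit of a subvariety need not be Zariski-closed (its degree may grow without bound).
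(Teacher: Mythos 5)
Your framework is the paper's own: the constructible topology on $X$, the uniform probability measures $\mu_n$ on the cycles $O_f(x_n)$, sequential (weak) compactness of the space of Radon probability measures on $\lvert X\rvert$, and the observation that $\mu(W)=0$ for every proper Zariski-closed forward-$f$-invariant $W\subsetneqq X$ — your proof of that observation (a nonempty forward-invariant subset of a finite cycle is the whole cycle, so $x_n\in W$, and genericity kills all but finitely many $n$) is correct and is essentially how the paper finishes. The problem is in the step you yourself flag as the crux. The structural input you invoke — ``the Zariski closure of the support of an ergodic invariant measure is a forward-$f$-invariant subvariety'' — is not strong enough to close the argument: it does not exclude an ergodic component whose support is Zariski dense but which nevertheless charges the proper closed set $V$, and in the ergodic decomposition it leaves you with an a priori uncountable family of proper invariant closures $W_\omega$, to which your vanishing statement cannot be applied one at a time to conclude that their total contribution is zero.

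What the paper actually uses is the precise form of \cite[Lemma 5.3]{Xie2023}: every $f$-invariant Radon probability measure on $\lvert X\rvert$ is a \emph{countable} convex combination $m=\sum_{y\in S}a_y\delta_{O_f(y)}$ of uniform measures on periodic orbits of \emph{scheme} points $y$ (not necessarily closed points). Granting this, the upgrade is short and needs one extra dimension argument that your outline omits: if $\mu(V)>0$, pick $y\in S$ with $a_y>0$ and $f^{j_0}(y)\in V$ for some $j_0$; since $y$ has period $k$, all the irreducible closed sets $f^j(Y)=\overline{\{f^j(y)\}}$ (with $Y:=\overline{\{y\}}$) have the same dimension, and one of them lies in $V$, so $\dim Y\le\dim V<\dim X$; hence $W:=\cup_{j=0}^{k-1}f^j(Y)$ is a proper, Zariski-closed, forward-$f$-invariant subvariety with $\mu(W)\ge a_y>0$, contradicting your own vanishing statement. (Note that $W$ is built from the \emph{closure of the periodic scheme point}, not from $\cup_s\overline{f^s(V)}$, which is indeed useless as you observe.) So the missing piece is not merely ``some'' structural input: it is specifically the countable periodic-orbit decomposition of invariant measures, plus the passage from ``$O_f(y)$ meets $V$'' to a proper invariant closed subvariety; with those two points supplied, your proposal coincides with the paper's proof.
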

\begin{proof}
	Clearly, it suffices to show that for every subsequence $(n_k)_k$ of $(n)_{n=1}^\infty$, there exists a subsubsequence $(n_{k_l})_l$ such that
	$$\frac{\#(V\cap O_f(x_{n_{k_l}}))}{\# O_f(x_{n_{k_l}})}\to0,\text{ as }l\to+\infty.$$ 
	
	Fix a proper subvariety $V$ in $X$. Let $\lvert X\rvert$ denote $X$ equipped with the constructible topology (i.e., the topology generated by all Zariski closed and open subsets of $X$) and let $\sM^1(\lvert X\rvert)$ be the space of all probability Radon measures on $\lvert X\rvert$ with the topology of weak convergence relative to all continuous functions on $\lvert X\rvert$. Then $\sM^1(\lvert X\rvert)$ is sequentially compact (cf. \cite[Corollary 1.14]{Xie2023}). For $n\in\Z_{>0}$, set
	$$m_n=(\# O_f(x_n))^{-1}\sum_{z\in O_f(x_n)}\delta_z.$$
	Let $1_V$ be the characteristic function of $V\subsetneqq\lvert X\rvert$; then $1_V$ is continuous with respect to the constructible topology. By the sequential compactness of $\sM^1(\lvert X\rvert)$, it suffices to show that for every subsequence $(n_k)_k$ of $(n)_{n=1}^\infty$ with $m_{n_k}\to m$ as $k\to+\infty$ in $\sM^1(\lvert X\rvert)$ for some $m\in\sM^1(\lvert X\rvert)$, we have
	\begin{equation}\label{equintonev}
		\int 1_V dm=0.
	\end{equation}
	
	Without loss of generality, assume that $(m_n)$ itself converges to a measure $m\in\sM^1(\lvert X\rvert)$; it suffices to prove \eqref{equintonev} in this case. As $f_*m_n=m_n$ for all $n\geq1$, we have $f_*m=m$ as well. Then according to \cite[Lemma 5.3]{Xie2023}, $m$ must be of the form $m=\sum_{y\in S}a_y\delta_{O_f(y)}$, where $S$ is a countable set of $f$-periodic points in $\lvert X\rvert$ (note that the points in $S$ may not be closed with respect to the Zariski topology of $X$), $a_y\in\R_{>0}$ with $\sum_{y\in S}a_y=1$, and $\delta_{O_f(y)}=(\#O_f(y))^{-1}\sum_{z\in O_f(y)}\delta_z$ for $y\in S$. As $m_n\to m$, we get
	$$\frac{\#(V\cap O_f(x_n))}{\# O_f(x_n)}=\int 1_V dm_n\to\int 1_V dm,\text{ as }n\to+\infty.$$
	Suppose that \eqref{equintonev} fails. Then there must be a point $y\in S$ (in the scheme-theoretic sense) with $a_y>0$ and $V\cap O_f(y)\neq\emptyset$. Let $k$ be the exact period of $y$ under $f$, and let $Y$ be the Zariski closure of $\{y\}$. Then $Y\subseteq\cup_{j=0}^{k-1}f^j(V)$, so $Y$ is a proper Zariski closed subset of $X$. Note that $$\frac{\#(Y\cap O_f(x_n))}{\# O_f(x_n)}=\int 1_Y dm_n\to\int 1_Y dm\geq\frac{a_y}{k}>0,\text{ as }n\to+\infty.$$ Hence for all sufficiently large $n\gg1$, we have
	\begin{equation}\label{infxn}
		x_n\in\cup_{j=0}^\infty f^j(Y)=\cup_{j=0}^{k-1} f^j(Y),
	\end{equation}
	where the last equality follows from $f^k(y)=y$ and $Y=\overline{\{y\}}^{\Zar}$. Since
	$$\dim(\cup_{j=0}^{k-1} f^j(Y))=\dim(Y)<\dim(X),$$
	the set $\cup_{j=0}^{k-1} f^j(Y)$ contains only finitely many $x_n$ because $(x_n)$ is generic, contradicting \eqref{infxn}.
\end{proof}

\section{Proofs of Theorem \ref{thmmainnumf} and the defined over $\overline{\Q}$ case of Theorem \ref{thmmaindim}}\label{section4}
\begin{proof}[Proof of Theorem \ref{thmmainnumf}]
Recall that $f:\P^1(\C)\to\P^1(\C)$ is a rational map of degree $d\geq2$, and there is a number field $K$ such that for every $z_0\in\Per(f)$, we have $\lvert\rho_f(z_0)\rvert^n\in K$ for some integer $n=n(z_0)\geq1$. We need to show that such an $f$ must be exceptional (Definition \ref{excep}).

Assume by contradiction that $f$ is not exceptional. In particular, $f$ is not a flexible Latt\`es map. For every $z_0\in\Per(f)$, we have $\lvert\rho_f(z_0)\rvert\in\overline{\Q}$, so $f$ is defined over $\overline{\Q}$ (hence over some number field) by Theorem \ref{thmalglenintro}. After replacing $K$ by a suitable finite extension, we may assume that both $f$ and $\overline{f}$ are defined over $K$, where $\overline{f}$ denotes the rational map obtained by conjugating the coefficients of $f$. According to \cite[Theorem 7 and Lemma 9]{Huguin2023}, which are essentially due to Zdunik \cite{zdunik2014characteristic}, there exists a sequence $(x_n)_{n=1}^\infty$ of distinct points in $\Per^*(f)$ such that $$a:=\lim\limits_{n\to\infty}\chi_f(x_n)>\sL_f,$$ where the limit exists and is finite.

Clearly, $\sL_f=\sL_{\overline{f}}$. For any $x\in\Per(f)$, we have
$$\overline{x}\in\Per(\overline{f}),\quad n_f(x)=n_{\overline{f}}(\overline{x}),\quad\text{and}\quad\rho_f(x)=\overline{\rho_{\overline{f}}(\overline{x})},$$
so $\chi_f(x)=\chi_{\overline{f}}(\overline{x})$.

Consider the morphism
$$F:=f\times\overline{f}:\P^1\times\P^1\to\P^1\times\P^1$$
over $K$. For $n\in\Z_{>0}$, set $p_n=(x_n,\overline{x_n})\in\Per(F)$. Let $\Gamma$ be the Zariski closure of $\{p_n\colon n\in\Z_{>0}\}$ in $\P^1\times\P^1$, which is defined over $\overline{\Q}$ since the $p_n$ are defined over $\overline{\Q}$. As the points $p_n$ are pairwise distinct, we have $\dim(\Gamma)\geq1$. After passing to a subsequence of $(p_n)$, we may assume that $\Gamma$ is irreducible and that $(p_n)_{n=1}^\infty$ is generic in $\Gamma$, i.e., no proper subvariety of $\Gamma$ contains infinitely many $p_n$. Let $\pi_j:\Gamma\to\P^1$ be the $j$-th projection from $\Gamma\subseteq\P^1\times\P^1$ to the $j$-th factor, for $j=1,2$. It is clear that $\pi_1\circ F=f\circ\pi_1$ and $\pi_2\circ F=\overline{f}\circ\pi_2$. 

There are 2 cases to consider, i.e., $\dim(\Gamma)=2$ and $\dim(\Gamma)=1$.

If $\dim(\Gamma)=2$, then $\Gamma=\P^1\times\P^1$, and the canonical $F$-invariant probability measure on the projective complex manifold $\Gamma(\C)=\P^1(\C)\times\P^1(\C)$ is $\mu:=\mu_f\times\mu_{\overline{f}}$, where $\mu_f$ and $\mu_{\overline{f}}$ are the canonical probability measures on $\P^1(\C)$ which are $f$-invariant and $\overline{f}$-invariant, respectively. (Equivalently, $\mu_f$ and $\mu_{\overline{f}}$ are the unique maximal entropy measures of $f$ and $\overline{f}$, respectively.)

If $\dim(\Gamma)=1$, by the solution \cite{Ghioca2018c} of the dynamical Manin-Mumford problem for $F$ on $\P^1\times\P^1$, the irreducible curve $\Gamma$ is periodic under $F$. After replacing $f$ by $f^m$ for a suitable $m\in\Z_{>0}$, we may assume that the irreducible curve $\Gamma$ is $F$-invariant. Still denote by $\mu$ the canonical $F$-invariant probability measure on the projective complex analytic space $\Gamma(\C)$. Recall that if $g:X\to X$ is a polarized surjective endomorphism on a projective variety $X$ over $\C$ (hence $g$ is finite), then the canonical $g$-invariant probability measure $\mu_g$ on the projective complex analytic space $X(\C)$ is exactly the unique probability measure with continuous potentials on $X(\C)$ such that $g_*\mu_g=\mu_g$ and $g^*\mu_g=\deg(g)^{\dim(X)}\mu_g$. Since $f\circ\pi_1=\pi_1\circ F$ and
$\overline{f}\circ\pi_2=\pi_2\circ F$ are commuting finite morphisms, Tate's limiting construction of canonical measures implies that
$$\deg (\pi_1)\mu=\pi_1^*(\mu_f)\text{ and }\deg (\pi_2)\mu=\pi_2^*(\mu_{\overline{f}}).$$

Now we deal with both cases ($\dim(\Gamma)=2$ and $\dim(\Gamma)=1$) simultaneously. We will show that $2\sL_f<\int\log\lvert\det(dF)\rvert d\mu$.

For $n\in\Z_{>0}$, consider the probability measure $$\nu_n=\frac{1}{n_f(x_n)[K_n:K]}\sum_{j=0}^{n_f(x_n)-1}\sum_{\tau\in\Gal(K_n/K)}\delta_{F^j(\tau(p_n))},$$ where $K_n$ is the Galois closure of $K(x_n)$ over $K$ in $\overline{\Q}$. 

\textbf{Claim:} $\nu_n$ converges weakly to $\mu$ as $n\to+\infty$.

We prove the claim using Theorem \ref{equid}. Let $\sL$ be the ample line bundle
$$\pi_1^*\sO_{\P^1}(1)\otimes\pi_2^*\sO_{\P^1}(1)$$
on $\Gamma$. Then $(F\vert_{\Gamma})^*\sL\cong\sL^{\otimes d}$. By \cite[\S 2]{Zhang1995}, there exists a unique semipositive metric on $\sL$ making $(F\vert_{\Gamma})^*\sL\cong\sL^{\otimes d}$ an isometry; denote this metrized line bundle by $\overline{\sL}$. The canonical measure $\mu_{\overline{\sL}}$ associated with the metrized line bundle $\overline{\sL}$ on $\Gamma(\C)$ is exactly the canonical $F$-invariant probability measure $\mu$ on $\Gamma(\C)$, since the metric of ${\overline{\sL}}$ can be obtained by Tate's limit process, similar to $\mu$; see \cite[\S 2]{Zhang1995} and \cite[\S 3.5]{Yuan2008}. We need to verify the conditions (1) (small) and (2) (generic) in Theorem \ref{equid}. Condition (1) is trivial because $h_{\overline{\sL}}(\Gamma)=0$ and the height of every periodic algebraic point relative to $\overline{\sL}$ is zero in the setting of a polarized algebraic dynamical system (cf.\ \cite{Yuan2008}). For the condition (2), let $V$ be an arbitrary proper subvariety of $\Gamma$. After replacing $V$ by its Galois orbit, we may assume that $V$ is defined over $K$. Then the generic condition (2) follows immediately from Lemma \ref{pergene}. Thus the claim holds.

Let $n\in\Z_{>0}$, take $m\in\Z_{>0}$ such that $\lvert\rho_f(x_n)\rvert^m\in K$ by the assumption, and write $l=n_f(x_n)$. For every $\tau\in\Gal(\overline{\Q}/K)$ and $0\leq j\leq l-1$, we have
\begin{align*}
		&\det(dF^l(F^j(\tau(p_n))))^m=\det(dF^l(\tau(p_n)))^m\\
		=&\tau(\det(dF^l(p_n))^m)=\tau(\rho_f(x_n)^m\rho_{\overline{f}}(\overline{x_n})^m)\\
		=&\tau(\lvert \rho_f(x_n)\rvert^{2m})\\
		=&\lvert \rho_f(x_n)\rvert^{2m},
\end{align*}
where the first equality holds because both $\tau(p_n)$ and $F^j(\tau(p_n))$ belong to the same $F$-periodic orbit of length $l$, and $dF^l=d(F^l)$ in the expressions. Hence $\lvert\det(dF^l(F^j(\tau(p_n))))\rvert=\lvert \rho_f(x_n)\rvert^2$. Then by the definition of $\nu_n$, we have
\begin{align*}
		&\int\log\lvert\det(dF)\rvert d\nu_n=\frac{1}{l}\int\log\lvert\det(dF^l)\rvert d\nu_n\\
		=&\frac{1}{l^2[K_n:K]}\sum_{j=0}^{l-1}\sum_{\tau\in\Gal(K_n/K)}\log\lvert\det(dF^l(F^j(\tau(p_n))))\rvert\\
		=&\frac{1}{l^2[K_n:K]}\sum_{j=0}^{l-1}\sum_{\tau\in\Gal(K_n/K)}\log\lvert \rho_f(x_n)\rvert^2\\
		=&\frac{2}{l}\log\lvert \rho_f(x_n)\rvert=2\chi_f(x_n).
\end{align*}
For every $A\in\R$, since the function $\max\{\log\lvert\det(dF)\rvert,A\}$ is continuous, we have
\begin{align*}
	&2a=2\lim\limits_{n\to\infty}\chi_f(x_n)=\lim\limits_{n\to\infty}\int\log\lvert\det(dF)\rvert d\nu_n\\
	\leq&\lim\limits_{n\to\infty}\int\max\{\log\lvert\det(dF)\rvert,A\} d\nu_n\\
	=&\int\max\{\log\lvert\det(dF)\rvert,A\} d\mu.
\end{align*}
Letting $A\to-\infty$, by the monotone convergence theorem, we have
\begin{equation}\label{3.5}
2\sL_f<2a\leq\int\log\lvert\det(dF)\rvert d\mu.
\end{equation}

We now show that \eqref{3.5} leads to a contradiction by considering the two cases separately.

If $\dim(\Gamma)=2$, then
$$\int\log\lvert\det(dF)\rvert d\mu=\int_{\P^1\times\P^1}\log\lvert\det(d(f\times\overline{f}))\rvert d(\mu_f\times\mu_{\overline{f}})=\sL_f+\sL_{\overline{f}}=2\sL_f,$$
contradicting \eqref{3.5}.

If $\dim(\Gamma)=1$, then
\begin{align*}
&\int\log\lvert\det(dF)\rvert d\mu=\int_{\Gamma}\log\lvert\det(d(f\times\overline{f}))\rvert d\mu\\
=&\int_{\Gamma}\log\lvert\det(d f)\circ\pi_1\rvert d\mu+\int_{\Gamma}\log\lvert\det(d\overline{f})\circ\pi_2\rvert d\mu\\
=&\int_{\Gamma}\log\lvert\det(df)\circ\pi_1\rvert d\frac{\pi_1^*(\mu_f)}{\deg(\pi_1)}+\int_{\Gamma}\log\lvert\det(d\overline{f})\circ\pi_2\rvert d\frac{\pi_2^*(\mu_{\overline{f}})}{\deg(\pi_2)}\\
=&\int_{\P^1}\log\lvert\det(df)\rvert d\mu_f+\int_{\P^1}\log\lvert\det(d\overline{f})\rvert d\mu_{\overline{f}}\\
=&\sL_f+\sL_{\overline{f}}=2\sL_f,
\end{align*}
contradicting \eqref{3.5}.

Therefore, $f$ must be exceptional. This completes the proof. 
\end{proof}
\begin{proof}[Proof of Theorem \ref{thmmaindim} when $f$ is defined over $\overline{\Q}$]
	Assume that $f$ is defined over $\overline{\Q}$, hence $f$ is defined over some number field $K$. Suppose that Theorem \ref{thmmaindim} fails for $f$. Let $V$ be the $\Q$-span of $\chi_f(\Per^*(f))$ in $\R$; then $\dim_{\Q}V<\infty$. We can take $M\in\Z_{>0}$ and $x_1,\dots,x_M\in\Per^*(f)$ such that $\chi_f(x_1),\dots,\chi_f(x_M)$ generate $V$ over $\Q$. By enlarging $K$, we may assume that $\lvert\rho_f(x_1)\rvert,\dots,\lvert\rho_f(x_M)\rvert\in K$. Then for every $z_0\in\Per^*(f)$, its characteristic exponent $\chi_f(z_0)$ is a $\Q$-linear combination of $\chi_f(x_1),\dots,\chi_f(x_M)$; in other words, there exist $n\in\Z_{>0}$ and $n_1,\dots,n_M\in\Z$ such that
	$$\lvert\rho_f(z_0)\rvert^n=\lvert\rho_f(x_1)\rvert^{n_1}\cdots\lvert\rho_f(x_M)\rvert^{n_M}\in K,$$
	which contradicts Theorem \ref{thmmainnumf} since $f$ is not exceptional by assumption.
\end{proof}
\begin{Rem}
We will complete the whole proof of Theorem \ref{thmmaindim} in \S \ref{section6.3}.

According to \cite{Douady1993}, PCF maps are defined over $\overline{\Q}$ within the moduli space $\sM_d$ of rational maps of degree $d$, except for the family of flexible Latt\`es maps. Therefore, the above proof of Theorem \ref{thmmaindim} for $f$ defined over $\overline{\Q}$ includes the special case where $f$ is PCF (and non-exceptional).
\end{Rem}

\section{Pseudo-linear algebra for field multiplication}\label{sectionlinear}
\subsection{Pseudo-linear algebra}
Let $V$ and $W$ be two $\R$-linear spaces. A pseudo-morphism $f:V\to W$ is a pair $(V_f,f)$ where $V_f$ is a linear subspace of $V$ and $f:V_f\to W$ is an $\R$-linear map. If $x\in V\setminus V_f$, we write $f(x)=\infty$. When $W=\R$, we say that $f$ is a pseudo-function.

Denote by $\PHom(V,W)$ the set of pseudo-morphisms from $V$ to $W$. For $f,g\in\PHom(V,W)$, we define $f+g$ to be the pair $(V_f\cap V_g,f\vert_{V_f\cap V_g}+g\vert_{V_f\cap V_g})$. We denote by $0$ the pair $(V,0)$. We have $0+f=f$ for all $f\in\PHom(V,W)$. Then $\PHom(V,W)$ is a commutative monoid under the operation $+$ with the zero element $0$. For every $a\in\R$, we define $af$ to be the pair $(V_f,af)$. Note that $f+(-f)=(V_f,0)$, which is not $0$ if $V_f\neq V$. We have a natural embedding $\Hom(V,W)\hookrightarrow\PHom(V,W)$.
 
For $f\in\PHom(U,V)$ and $g\in\PHom(V,W)$, we define their composition $g\circ f$ to be $(U_f\cap f^{-1}(V_g),g\circ f\vert_{U_f\cap f^{-1}(V_g)})\in\PHom(U,W)$. Observe that if $f(v)=\infty$, then $g\circ f(v)=\infty$.

Fix a subset $O$ of $V$.
\begin{Def}
A sequence $(f_i)_{i=1}^\infty$ in $\PHom(V,\R)$ is called an $O$-sequence if the following conditions hold:
\begin{enumerate}[(i)]
	\item $f_i(O)\subseteq\R_{\geq 0}\cup\{\infty\}$ for $i\geq1$;
	\item for every $\la\in O$, the set $\{i\geq1:f_i(\la)\neq 0\}$ is finite.
\end{enumerate}
Clearly, any infinite subsequence of an $O$-sequence is also an $O$-sequence.
\end{Def}
\begin{Def}
Let $(\la_i)_{i=1}^\infty$ be a sequence in $O$ and $(f_i)_{i=1}^\infty$ be a sequence in $\PHom(V,\R)$. We say that $((\la_i)_{i=1}^\infty,(f_i)_{i=1}^\infty) $ is an upper triangular $O$-system (resp. weak upper triangular $O$-system) if the following conditions hold:
\begin{enumerate}[(i)]
	\item $(f_i)_{i=1}^\infty$ is an $O$-sequence;
	\item $f_i(\la_i)\in\R_{>0}$ (resp. $f_i(\la_i)\in\R_{>0}\cup\{\infty\}$) for $i\geq1$;
	\item $f_j(\la_i)=0$ for $j>i\geq1$.
\end{enumerate}
Clearly, an upper triangular $O$-system is a weak upper triangular $O$-system.
\end{Def}
\begin{Lem}\label{weakindep}
Let $((\la_i)_{i=1}^\infty,(f_i)_{i=1}^\infty)$ be a weak upper triangular $O$-system. Then $(\la_i)_{i=1}^\infty$ are linearly independent over $\R$.
\end{Lem}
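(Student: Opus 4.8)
The plan is to argue by contradiction, exploiting the upper-triangular shape to isolate the coefficient of the smallest index by evaluating the corresponding pseudo-function.

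Suppose there were a nontrivial finite $\R$-linear relation among the $\la_i$; after discarding the vanishing coefficients we may write it as $\sum_{i\in S}c_i\la_i=0$ with $S\subseteq\Z_{\geq1}$ finite and nonempty and all $c_i\neq0$. Put $m:=\min S$. The crucial point is that $f_m$ is in fact defined on the entire linear span of $\{\la_i:i\in S\}$, so that its linearity can be used. Indeed, for every $i\in S$ with $i>m$ condition (iii) gives $f_m(\la_i)=0$, and in particular $\la_i$ lies in the domain $V_{f_m}$ of $f_m$. Since $V_{f_m}$ is a linear subspace and $c_m\neq0$, the relation lets us solve $\la_m=-c_m^{-1}\sum_{i\in S,\,i>m}c_i\la_i$, a linear combination of elements of $V_{f_m}$; hence $\la_m\in V_{f_m}$ as well, so $f_m(\la_m)\neq\infty$. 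By condition (ii) of a weak upper triangle $O$-system this forces $f_m(\la_m)\in\R_{>0}$.

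Now every summand of the relation lies in $V_{f_m}$, so applying the $\R$-linear map $f_m\colon V_{f_m}\to\R$ is legitimate and, using $f_m(\la_i)=0$ for $i>m$, yields
\[0=f_m(0)=f_m\Big(\sum_{i\in S}c_i\la_i\Big)=c_m f_m(\la_m)+\sum_{i\in S,\,i>m}c_i f_m(\la_i)=c_m f_m(\la_m).\]
As $f_m(\la_m)>0$ this gives $c_m=0$, contradicting $c_m\neq0$. Therefore no nontrivial relation exists and the $\la_i$ are linearly independent over $\R$.

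I do not expect a genuine obstacle here; the only subtlety is the pseudo-linear one, namely that before invoking linearity of $f_m$ one must first place $\la_m$ inside the domain $V_{f_m}$, and the ``weak'' relaxation permitting $f_m(\la_m)=\infty$ a priori is precisely absorbed by using the hypothetical relation together with the triangularity $f_m(\la_i)=0$ ($i>m$). I note that conditions (i) and (ii) in the definition of an $O$-sequence (nonnegativity on $O$ and finiteness of $\{i:f_i(\la)\neq0\}$) are not needed for this particular statement; they will be used in the later results.
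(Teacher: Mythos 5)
Your overall strategy (use the triangular structure to isolate a single coefficient, being careful to first place everything in the domain of the relevant pseudo-function) is the right one, but you have applied the triangularity in the wrong direction, and this breaks the argument as written. Condition (iii) of the definition reads $f_j(\la_i)=0$ for $j>i$, i.e.\ the \emph{later} functionals kill the \emph{earlier} vectors. With $m:=\min S$ you need $f_m(\la_i)=0$ for $i\in S$ with $i>m$, which is the case $j<i$; the definition says nothing of the sort about these "above-diagonal" entries. Indeed, for $i>m$ the only constraints on $f_m(\la_i)$ come from the $O$-sequence conditions (nonnegativity on $O$, and finiteness of $\{j:f_j(\la_i)\neq 0\}$), neither of which forces $f_m(\la_i)=0$; for instance $f_1(\la_2)$ may well be a positive real number. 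Consequently both of your key steps fail: you cannot conclude that $\la_i\in V_{f_m}$ for $i>m$ (so $\la_m\in V_{f_m}$ does not follow), and even if everything were in the domain, the sum $\sum_{i>m}c_if_m(\la_i)$ need not vanish, so you cannot extract $c_m=0$.

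The repair is immediate: take $m:=\max S$ instead. Then condition (iii) does give $f_m(\la_i)=0$ for all $i\in S$ with $i<m$, so each such $\la_i$ lies in $V_{f_m}$, hence so does $\la_m=-c_m^{-1}\sum_{i<m}c_i\la_i$; therefore $f_m(\la_m)\neq\infty$, and condition (ii) of a weak system upgrades this to $f_m(\la_m)\in\R_{>0}$. Applying $f_m$ to the relation then yields $c_mf_m(\la_m)=0$, so $c_m=0$, a contradiction. This corrected version is exactly the paper's proof, which shows that no $\la_l$ lies in $\mathrm{span}_\R\{\la_i:i\leq l-1\}$ by evaluating $f_l$. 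Your closing observation that the $O$-sequence conditions are not needed for this particular lemma is correct for the repaired argument.
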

\begin{proof}
Since $f_1(\la_1)\neq0$, we have $\la_1\neq0$. It remains to show that for all $l\geq2$, $\la_l$ is not contained in ${\rm span}_{\R}\{\la_i:i\leq l-1\}$. Suppose otherwise that $\la_l=\sum_{i=1}^{l-1} a_i\la_i$ for some $l\geq2$ and $a_i\in\R$, $1\leq i\leq l-1$. Then $f_l(\la_l)=\sum_{i=1}^{l-1} a_i f_l(\la_i)=0$, contradicting our assumption.
\end{proof}
Let $\tau:V\to V$ be an involution (here we only assume that that $\tau^2=\mathrm{Id}$; $\tau$ may be the identity on $V$).
\begin{Lem}\label{invoweak}
Assume that $\tau(O)\subseteq O$. Let $((\la_i)_{i=1}^\infty,(f_i)_{i=1}^\infty)$ be an upper triangular $O$-system. Then there exists a strictly increasing sequence $(m_i)_{i=1}^\infty$ in $\Z_{>0}$ such that the pair $((\la_{m_i}+\tau(\la_{m_i}))_{i=1}^\infty,(f_{m_i})_{i=1}^\infty)$ is a weak upper triangular $O^\prime$-system, where $O^\prime=\{\la_{m_i}+\tau(\la_{m_i}):i\in\Z_{>0}\}$.
\end{Lem}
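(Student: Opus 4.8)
The plan is to build the strictly increasing sequence $(m_i)_{i\ge1}$ recursively. Write $\mu_j:=\lambda_{m_j}+\tau(\lambda_{m_j})$; each $\mu_j$ lies in $O'$ by definition, so only the three axioms of a weak upper triangle $O'$-system have to be secured. Two facts will hold for \emph{any} strictly increasing $(m_i)$. Since $\tau(O)\subseteq O$, every $\tau(\lambda_n)$ lies in $O$, so $f_i(\tau(\lambda_n))\in\R_{\ge0}\cup\{\infty\}$ for all $i,n$ and, $(f_i)$ being an $O$-sequence, the set $\{l:f_l(\tau(\lambda_n))\neq0\}$ is finite for each $n$. \emph{Fact 1:} $f_{m_j}(\mu_j)\in\R_{>0}\cup\{\infty\}$, because $f_{m_j}(\lambda_{m_j})\in\R_{>0}$ forces $\lambda_{m_j}\in V_{f_{m_j}}$, whence $f_{m_j}(\mu_j)$ equals $\infty$ if $\tau(\lambda_{m_j})\notin V_{f_{m_j}}$ and otherwise equals $f_{m_j}(\lambda_{m_j})+f_{m_j}(\tau(\lambda_{m_j}))\in\R_{>0}$; this is axiom (ii). \emph{Fact 2:} if $m_j$ exceeds $m_i$ and every element of the finite set $\{l:f_l(\tau(\lambda_{m_i}))\neq0\}$, then $f_{m_j}(\lambda_{m_i})=0$ (the upper-triangle relation applies since $m_j>m_i$) and $f_{m_j}(\tau(\lambda_{m_i}))=0$, so $\lambda_{m_i},\tau(\lambda_{m_i})\in V_{f_{m_j}}$ and $f_{m_j}(\mu_i)=0$; this gives axiom (iii) and also the part of axiom (i) in which the index on $f$ exceeds the index on $\mu$.

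The recursion is then: given $m_1<\dots<m_{k-1}$, choose $m_k$ greater than $m_{k-1}$ and than every element of the finitely many finite sets $\{l:f_l(\tau(\lambda_{m_i}))\neq0\}$ with $i<k$, and subject to one further constraint, specified below. Granting that, $(f_{m_i})$ is an $O'$-sequence, i.e.\ axiom (i) holds: its finiteness clause is automatic, since $f_{m_i}$ vanishing at both $\lambda_{m_k}$ and $\tau(\lambda_{m_k})$ forces it to vanish at $\mu_k$, so $\{i:f_{m_i}(\mu_k)\neq0\}$ is contained in the finite set of $i$ with $m_i\in\{l:f_l(\lambda_{m_k})\neq0\}\cup\{l:f_l(\tau(\lambda_{m_k}))\neq0\}$; and for its positivity clause $f_{m_i}(\mu_k)\in\R_{\ge0}\cup\{\infty\}$, the cases $i\ge k$ are covered by Facts 1 and 2, while for $i<k$ the membership $\lambda_{m_k},\tau(\lambda_{m_k})\in O$ gives $f_{m_i}(\lambda_{m_k}),f_{m_i}(\tau(\lambda_{m_k}))\in\R_{\ge0}\cup\{\infty\}$, and a short case check shows $f_{m_i}(\mu_k)$ is a sum of two nonnegative reals when both of these are finite and equals $\infty$ when exactly one of them is infinite. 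The only remaining possibility — $\lambda_{m_k},\tau(\lambda_{m_k})\notin V_{f_{m_i}}$ yet $\mu_k\in V_{f_{m_i}}$, where $f_{m_i}(\mu_k)$ might be negative — is exactly what the further constraint on $m_k$ is imposed to exclude for all $i<k$.

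The main obstacle, therefore, is to verify that this further constraint can always be met: that for each $k$ there exist arbitrarily large $n$ such that no $i<k$ exhibits the bad configuration with $\lambda_n$ in place of $\lambda_{m_k}$. A clean sufficient condition is $\lambda_{m_k}\in\bigcap_{i<k}V_{f_{m_i}}$, because that alone already yields $f_{m_i}(\mu_k)\in\R_{\ge0}\cup\{\infty\}$ for every $i<k$ (if $\tau(\lambda_{m_k})\in V_{f_{m_i}}$ the value is a sum of nonnegative reals, otherwise $\mu_k\notin V_{f_{m_i}}$ and the value is $\infty$); so the heart of the proof is to show that, for each $k$, infinitely many indices $n$ have $\lambda_n$ lying in all of the finitely many fixed subspaces $V_{f_{m_1}},\dots,V_{f_{m_{k-1}}}$. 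This is the step where the hypothesis $\tau(O)\subseteq O$ and the positivity built into the definition of an $O$-sequence have to be exploited in an essential way; once it is in hand, the rest is routine bookkeeping with the $\infty$-conventions for pseudo-functions.
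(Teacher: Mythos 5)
Your bookkeeping up to the last paragraph is correct and follows the same recursion as the paper's proof: choosing $m_k$ larger than $m_{k-1}$ and than every element of the finite sets $\{l: f_l(\tau(\la_{m_i}))\neq 0\}$, $i<k$, secures axioms (ii) and (iii), the finiteness clause of (i), and the nonnegativity clause of (i) for all pairs except those with $i<k$. But the step you yourself call ``the heart of the proof'' --- that for each $k$ infinitely many $n$ satisfy $\la_n\in\bigcap_{i<k}V_{f_{m_i}}$ --- is left unproved, and it does not follow from the hypotheses: the $O$-sequence axiom bounds, for each \emph{fixed} $\la\in O$, the set of indices $i$ with $f_i(\la)\neq 0$, but places no bound at all, for a fixed $i$, on the set of $n$ with $\la_n\notin V_{f_i}$. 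Concretely, take $V=\bigoplus_{n\ge 1}(\R a_n\oplus\R b_n)$, $\tau(a_n)=b_n$, $O=\{a_n,b_n\}_{n\ge 1}$, $\la_n=a_n$, and let $f_i$ have domain ${\rm span}(\{a_l,b_l\}_{l\le i}\cup\{a_n+b_n\}_{n>i})$ with $f_i(a_i)=1$, $f_i(a_l)=f_i(b_l)=0$ for the other basis vectors of index $\le i$, and $f_i(a_n+b_n)=-1$ for $n>i$. This is an upper triangle $O$-system with $\tau(O)=O$, yet $f_{m_1}(\la_{m_2}+\tau(\la_{m_2}))=-1$ for \emph{every} strictly increasing $(m_i)$; so not only your missing claim but the conclusion of the lemma itself fails in the stated generality.

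The gap is therefore real, but you have put your finger on exactly the point that the paper's own proof elides: it simply writes $f_{m_i}(\la_{m_{l+1}}+\tau(\la_{m_{l+1}}))=f_{m_i}(\la_{m_{l+1}})+f_{m_i}(\tau(\la_{m_{l+1}}))$, an identity that is valid only when at least one of the two arguments lies in $V_{f_{m_i}}$ --- it fails precisely in your ``bad configuration,'' where both arguments are outside the domain but their sum is inside. Everything becomes correct once one adds the hypothesis $O\subseteq\bigcap_i V_{f_i}$ (equivalently, $f_i(O)\subseteq\R_{\ge 0}$ with no value $\infty$ on $O$): then $f_{m_i}(\la)$ and $f_{m_i}(\tau(\la))$ are finite nonnegative reals for every $\la\in O$, additivity is genuine, and your recursion closes with no further constraint. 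That extra hypothesis holds in every application in the paper (the $\phi_{v_i}$ of \S 6.1 are everywhere defined, and in \S 6.2 the set $O=\rog(A\setminus(p\cup\tau(p)))\otimes_{\Z}\Q_{>0}$ lies in the domain of every $\phi_{v_i}\circ s_p$), so the lemma is harmless as used; but as a self-contained statement your proof cannot be completed as proposed, because the claim you defer is false.
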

\begin{proof}
	It is clear that $(f_i)_{i=1}^\infty$ is also an $O^\prime$-sequence. We construct $(m_i)_{i=1}^\infty$ recursively. Set $m_1:=1$. As $\tau(\la_1)\in O$, we have $f_1(\tau(\la_1))\in\R_{\geq0}\cup\{\infty\}$. Since $f_1(\la_1)\in\R_{>0}$, we have
	$$f_{m_1}(\la_{m_1}+\tau(\la_{m_1}))=f_1(\la_1)+f_1(\tau(\la_1))\in\R_{>0} \cup\{\infty\}.$$
	Assume that we have constructed $m_1,\dots,m_l$ satisfying the conditions for weak upper triangular $O^\prime$-systems. Since $(f_i)_{i =1}^\infty$ is an $O$-sequence and $\tau(\la_1),\dots,\tau(\la_l)\in O$, there exists $m_{l+1}>m_l$ such that $f_{m_{l+1}}(\tau(\la_{m_i}))=0$ for all $1\leq i\leq l$. Then for all $i=1,\dots,l$, we have
	$$f_{m_{l+1}}(\la_{m_i}+\tau(\la_{m_i}))=f_{m_{l+1}}(\la_{m_i})+f_{m_{l+1}}(\tau(\la_{m_i}))=0.$$
	Moreover,
	$$f_{m_{l+1}}(\la_{m_{l+1}}+\tau(\la_{m_{l+1}}))=f_{m_{l+1}}(\la_{m_{l+1}})+f_{m_{l+1}}(\tau(\la_{m_{l+1}}))\in\R_{>0}\cup\{\infty\}$$
	and
	$$f_{m_i}(\la_{m_{l+1}}+\tau(\la_{m_{l+1}}))=f_{m_i}(\la_{m_{l+1}})+f_{m_i}(\tau(\la_{m_{l+1}}))\in\R_{\geq0} \cup\{\infty\}.$$
	This completes the proof.
\end{proof}
Combining Lemma \ref{invoweak} and Lemma \ref{weakindep}, we obtain:
\begin{Cor}\label{invoindep}
Assume that $\tau(O)\subseteq O$. Let $((\la_i)_{i=1}^\infty,(f_i)_{i=1}^\infty)$ be an upper triangular $O$-system. Then $\dim_{\R}{\rm span}_{\R}\{ 2^{-1}(\la_i+\tau(\la_i)):i\geq 1\}=\infty$.
\end{Cor}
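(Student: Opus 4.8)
The plan is to deduce Corollary \ref{invoindep} directly from the two lemmas just established, with no new input needed. First I would apply Lemma \ref{invoweak} to the given upper triangle $O$-system $((\la_i)_{i=1}^\infty,(f_i)_{i=1}^\infty)$: since the standing hypothesis $\tau(O)\subseteq O$ is exactly what that lemma requires, it produces a strictly increasing sequence $(m_i)_{i=1}^\infty$ in $\Z_{>0}$ for which the pair $((\la_{m_i}+\tau(\la_{m_i}))_{i=1}^\infty,(f_{m_i})_{i=1}^\infty)$ is a weak upper triangle $O^\prime$-system, where $O^\prime=\{\la_{m_i}+\tau(\la_{m_i}):i\in\Z_{>0}\}$.

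Next I would invoke Lemma \ref{weakindep} for this weak upper triangle $O^\prime$-system. It immediately yields that the vectors $\la_{m_1}+\tau(\la_{m_1}),\la_{m_2}+\tau(\la_{m_2}),\dots$ are linearly independent over $\R$. Because $(m_i)_{i=1}^\infty$ is an infinite strictly increasing sequence of positive integers, there are infinitely many such vectors, so the $\R$-subspace they span is infinite-dimensional.

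Finally I would observe the chain of relations
\[
{\rm span}_{\R}\{\la_{m_i}+\tau(\la_{m_i}):i\geq 1\}\subseteq {\rm span}_{\R}\{\la_i+\tau(\la_i):i\geq 1\}={\rm span}_{\R}\{2^{-1}(\la_i+\tau(\la_i)):i\geq 1\},
\]
where the equality holds because rescaling each generator by the nonzero constant $2^{-1}$ does not change the span. Combining this inclusion with the infinite-dimensionality obtained in the previous paragraph gives $\dim_{\R}{\rm span}_{\R}\{2^{-1}(\la_i+\tau(\la_i)):i\geq 1\}=\infty$.

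I expect no genuine obstacle here: the substantive work is already packaged into Lemmas \ref{invoweak} and \ref{weakindep}, and what remains is bookkeeping — confirming that passing to the subsequence $(m_i)$ retains infinitely many vectors and that the harmless rescaling by $1/2$ does not affect dimensions. (If one preferred not to treat Lemma \ref{invoweak} as a black box, one could instead reproduce its recursive construction in place, re-deriving that the $O$-sequence property of $(f_i)$ allows one to annihilate the finitely many values $f_{m_{l+1}}(\tau(\la_i))$ for $i\le l$; but citing the lemma is the cleaner route.)
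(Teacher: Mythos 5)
Your proposal is correct and matches the paper's argument exactly: the paper derives Corollary \ref{invoindep} directly from Lemma \ref{invoweak} followed by Lemma \ref{weakindep}, with the same trivial observations about the subsequence and the harmless factor $2^{-1}$.
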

\begin{Rem}\label{rmkanyorder}
The discussion in this subsection also applies when $\R$ is replaced by any ordered field $F$ (for example, $\Q$ with the usual order).
\end{Rem}

\subsection{Linear algebra for multiplication}\label{sec5.2}
For every field $k$ of characteristic $0$, denote by $\mu_k$ the subgroup of roots of unity in $k$. Let
$$\rog:k^*\to\D(k):=k^*/\mu_k$$
be the quotient map. Extend $\rog$ to a map $\rog:k\to\D(k)\cup\{\infty\}$ by sending $0$ to $\infty$. (The notation ``$\rog$'' is used by analogy with the classical $\log$ function.) The embedding $k\hookrightarrow\overline{k}$ induces a natural embedding $\D(k)\hookrightarrow\D(\overline{k})$ of multiplicative abelian groups. Set $\D(k)_\Q:=\D(k)\otimes_\Z\Q$, where $\D(k)$ is viewed as a multiplicative abelian group (hence a $\Z$-module). Then $\D(k)_{\Q}$ is the subspace of $\D(\overline{k})$ spanned by $\D(k)$ over $\Q$. Set $\D(k)_{\R}:=\D(k)\otimes_\Z\R$.

Let $A\subseteq k$ be an integral domain with ${\rm Frac}(A)=k$. Define
$$\D(A):=\rog (A\setminus\{0\})\subseteq\D(k),$$
which is a subsemigroup of $\D(k)$. For every prime ideal $p$ of $A$, the surjective projection $A\to A/p$ induces a surjective map
$$s_p:\D(A)\cup\{\infty\}\to\D(A/p)\cup\{\infty\}.$$
In fact, we may view $s_p$ as a pseudo-morphism
$$s_p:\D(k)_\R\to\D({\rm Frac}(A/p))_\R$$
with domain $V_{s_p}:=(A\setminus p)\otimes_\Z\R$.

\subsection{Norms}
Let $k$ be a field of characteristic $0$. For every finite extension $\widetilde{k}$ of $k$, denote by
$N_{\widetilde{k}/k}:\widetilde{k}\to k$ the norm map. The norm map induces a map
$$\mathbf{n}_k:\D(\overline{k})_{\Q}\to\D(k)_\Q,\rog(x)\mapsto[k^\prime:k]^{-1}\rog(N_{k^\prime/k}(x)),$$
where $k^\prime$ is any finite extension of $k$ containing $x$. One can check that $\mathbf{n}_k$ is well-defined and $\Q$-linear. We also denote by $\mathbf{n}_k$ its $\R$-linear extension $\mathbf{n}_k:\D(\overline{k})_\R\to\D(k)_\R$. When the field $k$ is clear, we write $\mathbf{n}$ for $\mathbf{n}_k$. 

\subsection{Valuations}
Assume that $K$ is a number field. Denote by $\sM_K$ the set of all places of $K$. For every $v\in\sM_K$, we define a map $\phi_v:\D(K)_\R\to\R$ by $\phi_v(\rog(x))=-\log(\lvert x\rvert_v)$ for $x\in K^*$ and extending $\R$-linearly, where $\left|\cdot\right|_v$ is a fixed multiplicative norm on $K$ corresponding to $v$. It is easy to check that the map $\phi_v$ is well-defined and $\R$-linear. We denote the restriction of $\phi_v$ to $\D(K)_\Q$ by $\phi_v:\D(K)_\Q\to\R$ as well. For every $a\in\D(K)_\R$, it is clear that the set $\{v\in\sM_K:\phi_v(a)\neq0\}$ is finite.

Let $S$ be a finite subset of $\sM_K$ containing all archimedean places. Let $\sO_{K, S}$ be the ring of $S$-integers in $K$. Let $\sO$ be the integral closure of $\sO_{K, S}$ in $\overline{K}$. For every $v\in\sM_K\setminus S$ and $\la\in\sO$, we have $\phi_v\circ\mathbf{n}_K(\la)\geq0$. Write $\sM_K\setminus S=\{v_1,v_2,\dots\}$. Then $(\phi_{v_i})_{i=1}^\infty\subseteq\Hom(\D(K)_\R,\R)$ is an $\sO_{K,S}$-sequence and $(\phi_{v_i}\circ\mathbf{n}_K)_{i=1}^\infty\subseteq\Hom(\D(\overline{K})_\R,\R)$ is an $\sO$-sequence.

\subsection{Complex conjugation and absolute value}
Denote by $\tau:\C\to\C$ the complex conjugation. Then $\R$ is the invariant subfield $\C^\tau$ of $\C$ under $\tau$. As $\Q$-vector spaces, we have an identification $$\D(\R)=\R^*/\{\pm1\}\to\R,\rog (a)\mapsto\log\lvert a\rvert,$$ where the latter $\log$ is the classical one on $\R_{>0}$. Using this identification, the absolute value map on $\C$ can be viewed as the norm map
$$\mathbf{n}_\R:\D(\C)\to\D(\R),\rog(x)\mapsto\frac{\rog(x)+\rog(\tau(x))}{2}.$$

Let $\mathbf{k}$ be an algebraically closed subfield of $\C$ stable under complex conjugation $\tau$. Still denote by $\tau\in\Gal(\mathbf{k}/\Q)$ the restriction of complex conjugation on $\mathbf{k}$, which is an involution of $\mathbf{k}$. Let $\mathbf{k}^\tau=\mathbf{k}\cap\R$ be the $\tau$-invariant subfield of $\mathbf{k}$. Then the restriction of the absolute value $\mathbf{n}_{\R}$ on $\mathbf{k}$ is
$$\mathbf{n}_{\mathbf{k}^\tau}:\D(\mathbf{k})\to\D(\mathbf{k}^\tau),\rog(x)\mapsto\frac{\rog(x)+\rog(\tau(x))}{2}.$$

We shall prove the following theorem involving an involution $\tau$ (which may not be complex conjugation and may be the identity) in the next section. 

\begin{Thm}\label{thminvo}
Let $\mathbf{k}$ be an algebraically closed field of characteristic $0$. Let $\tau\in\Gal(\mathbf{k}/\Q)$ be an element with $\tau^2=\mathrm{Id}$. If $f:\P^1_\mathbf{k}\to\P^1_\mathbf{k}$ is an endomorphism over $\mathbf{k}$ of degree at least $2$ which is not PCF, then the $\Q$-subspace of $\D(\mathbf{k}^\tau)_\Q$ spanned by $\{\mathbf{n}_{\mathbf{k}^\tau}(\rog(\rho_f(x))):x\in\Per^*(f)(\mathbf{k})\}$ is of infinite dimension.
\end{Thm}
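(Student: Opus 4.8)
The plan is to deduce Theorem~\ref{thminvo} from the pseudo-linear algebra of \S\ref{sectionlinear}. By Corollary~\ref{invoindep} it suffices to exhibit an upper triangle $O$-system $((\lambda_i)_{i\ge1},(f_i)_{i\ge1})$ in which $O\subseteq\D(\mathbf k)_{\R}$ is $\tau$-stable and each $\lambda_i=\rog(\rho_f(x_i))$ for some $x_i\in\Per^*(f)(\mathbf k)$: the corollary then gives $\dim_{\R}\mathrm{span}_{\R}\{2^{-1}(\lambda_i+\tau(\lambda_i)):i\ge1\}=\infty$, and since $2^{-1}(\rog(z)+\rog(\tau z))=\mathbf n_{\mathbf k^{\tau}}(\rog(z))$ as recalled in \S\ref{sectionlinear}, the elements $\mathbf n_{\mathbf k^{\tau}}(\rog(\rho_f(x_i)))$ are then $\R$-, hence $\Q$-linearly independent in $\D(\mathbf k^{\tau})_{\Q}$. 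I would first treat the case where $f$ is defined over $\overline{\Q}$, and then reduce the general case to it.

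\textbf{The case $f$ over $\overline{\Q}$.} Fix a $\tau$-stable number field $K$ over which both $f$ and $f^{\tau}$ are defined, and a finite $S\subseteq\sM_K$ containing the archimedean places and all primes of bad reduction of $f$ and of $f^{\tau}$. Away from $S$ every multiplier of a periodic point of $f$ or of $f^{\tau}$ is integral, so $O:=\{\rog(\rho_f(x)):x\in\Per^*(f)\}\cup\{\rog(\rho_{f^{\tau}}(y)):y\in\Per^*(f^{\tau})\}$ lies in $\D(\sO)$ ($\sO$ the integral closure of $\sO_{K,S}$ in $\overline{\Q}$), is $\tau$-stable, and the sequence $(\phi_{v_i}\circ\mathbf n_K)_{i\ge1}$ over $\sM_K\setminus S=\{v_1,v_2,\dots\}$ is an $O$-sequence as in \S\ref{sectionlinear}. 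Since $f$ is not PCF it has a non-preperiodic critical point $c$; by \cite[Lemma~4.1]{Benedetto2012}, a consequence of Siegel's theorem on $S$-integral points, there are infinitely many $v\notin S$ of good reduction for $f$ at which the reduction $\bar c$ is periodic for $\bar f_v$. For such $v$ the cycle of $\bar f_v$ through $\bar c$ is superattracting, its multiplier being a product one of whose factors is $\overline{f'(c)}=0$, and being $\ne1$ it lifts, by good reduction together with the \'etaleness recalled in \S\ref{sectionmoduli}, to a periodic cycle $C_v$ of $f$ over $\overline{\Q}$ with $w(\rho_f(C_v))>0$ at some place $w\mid v$. A cycle of $f$ through one of the finitely many critical points has multiplier $0$, and each such cycle equals the lift of $\bar C_v$ for only finitely many $v$ (otherwise $c$ would reduce to a fixed periodic point modulo infinitely many $v$, forcing $c$ to be that point); discarding these, we obtain for infinitely many $v\notin S$ a point $x_v\in\Per^*(f)$ with $\phi_v(\mathbf n_K(\rog(\rho_f(x_v))))>0$.

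\textbf{Assembling the system.} I would choose $v_1,v_2,\dots$ and $x_i=x_{v_i}$ recursively: at step $i+1$ pick $v_{i+1}$ from the infinite set just produced, outside $S$ and outside the finitely many places below the supports of $\rho_f(x_1),\dots,\rho_f(x_i)$. Then $\phi_{v_{i+1}}(\mathbf n_K(\rog(\rho_f(x_j))))=0$ for $j\le i$ while $\phi_{v_{i+1}}(\mathbf n_K(\rog(\rho_f(x_{i+1}))))>0$, so with $\lambda_i:=\rog(\rho_f(x_i))$ and $f_i:=\phi_{v_i}\circ\mathbf n_K$ the pair $((\lambda_i),(f_i))$ is an upper triangle $O$-system; Corollary~\ref{invoindep} finishes this case. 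Note that the multipliers so produced are all $\ne1$, since they have positive valuation at $v_i$.

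\textbf{The general case.} Pass to a $\tau$-stable finitely generated subfield $\mathbf k_1=\mathrm{Frac}(A)$ of $\mathbf k$, with $A$ a $\tau$-stable finitely generated $\Q$-domain, over which $f$, $f^{\tau}$, their critical points and the periodic data we need are defined; since conjugation preserves multipliers we may assume $f$ is not conjugate over $\overline{\mathbf k_1}$ to a map defined over $\overline{\Q}$, and induct on $r=\mathrm{trdeg}(\mathbf k_1/\Q)$, the base case $r=0$ being the previous one. For $r\ge1$, after normalizing and shrinking $A$, I would choose a height-one prime $\mathfrak p$ of $A$ such that $f$ has good reduction at $\mathfrak p$, $\tau$ descends to $\kappa(\mathfrak p)=\mathrm{Frac}(A/\mathfrak p)$, the specialization $f_{\mathfrak p}$ over $\kappa(\mathfrak p)$ (of transcendence degree $r-1$) is still not PCF, and the pseudo-morphism $s_{\mathfrak p}:\D(\mathbf k_1)_{\R}\to\D(\kappa(\mathfrak p))_{\R}$ of \S\ref{sec5.2} is defined on the relevant classes and commutes with $\mathbf n$ and $\tau$. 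Periodic points of $f_{\mathfrak p}$ of multiplier $\ne1$ lift to periodic points of $f$ with specializing multiplier (by \'etaleness, \S\ref{sectionmoduli}); applying the inductive hypothesis to $f_{\mathfrak p}$ and pulling the resulting $\Q$-linear independence back through the $\Q$-linear, $\mathbf n$- and $\tau$-compatible map $s_{\mathfrak p}$ yields infinitely many $x_i\in\Per^*(f)$ with $\mathbf n_{\mathbf k_1^{\tau}}(\rog(\rho_f(x_i)))$ linearly independent over $\Q$, and these stay independent in $\D(\mathbf k^{\tau})_{\Q}$ by the evident injection. The main obstacle is the first requirement on $\mathfrak p$: no Bertini-type genericity is available over the countable field $\overline{\Q}$, so one must rule out the pathology that every such specialization is PCF. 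This is where the wandering critical point $c$ enters decisively --- via positivity of its canonical height over the function field $\mathbf k_1$ together with rigidity of the PCF locus (Thurston rigidity and the dynamical Andr\'e--Oort theorem for curves) --- and it is the step that carries the real weight in \S\ref{section6.2}.
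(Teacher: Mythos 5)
Your reduction to Corollary \ref{invoindep} and your treatment of the case $\mathbf k=\overline{\Q}$ are correct and essentially identical to the paper's \S\ref{section6.1}: the same $\tau$-stable set of $S$-units, the same use of \cite[Lemma 4.1]{Benedetto2012} to produce infinitely many places at which the wandering critical point becomes periodic, and the same recursive choice of places avoiding the (finite) supports of the previously chosen multipliers. Your device for ensuring the lifted cycle has nonzero multiplier (it can coincide with one of the finitely many critical cycles of $f$ for only finitely many $v$) is a minor variant of the paper's enlargement of $B$ so that $\tilde o\notin X_v$, and it works.

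The general case is where the genuine gap lies, and it is twofold. First, the step you yourself flag as carrying the real weight --- producing a specialization at which the map is still not PCF --- is left unproved: naming positivity of the canonical height over function fields, Thurston rigidity and dynamical Andr\'e--Oort does not constitute an argument, and those are not the tools the proof actually needs. The paper invokes \cite[Lemma 3.3]{Ghioca2018} to find, in one step, a point $c\in\Spec(A\otimes_{\Z}\overline{\Q})(\overline{\Q})$ at which the orbit of the fixed critical point $o$ remains infinite; this both settles the non-PCF requirement and collapses your induction on transcendence degree into a single specialization from $K$ down to a number field $L$. Second, your plan to apply the inductive hypothesis downstairs and pull back the already $\tau$-symmetrized independence statement forces you to require $\tau(\mathfrak p)=\mathfrak p$ so that $\tau$ descends to the residue field; arranging a $\tau$-fixed prime simultaneously with all your other conditions is an additional constraint you do not justify. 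The paper sidesteps this entirely by a different order of operations: it pulls back through $s_p$ the \emph{upper triangle system} produced by Lemma \ref{lem5.2} for $f_c$ (which involves no $\tau$), discards the finitely many periodic points landing in $W_c\cup W_{\tau(c)}$, observes that the resulting system is an upper triangle system for the $\tau$-invariant set $\rog(A\setminus(p\cup\tau(p)))\otimes_{\Z}\Q_{>0}$, and only then applies Corollary \ref{invoindep} upstairs. If you adopt that order, no compatibility of $\tau$ with the specialization is needed and no induction is required.
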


Applying Theorem \ref{thminvo} to the case $\mathbf{k}=\C$ and $\tau$ being complex conjugation, we deduce Theorem \ref{thmmaindim} in the case where $f$ is not PCF.
\begin{Rem}
By setting $\tau=\mathrm{Id}$, Theorem \ref{thminvo} yields the following result:

Assume that $\mathbf{k}$ is an algebraically closed field of characteristic $0$. If $f:\P^1_\mathbf{k}\to\P^1_\mathbf{k}$ is an endomorphism over $\mathbf{k}$ of degree at least $2$ which is not PCF, then the $\Q$-subspace of $\D(\mathbf{k})_\Q$ spanned by $\{\rog(\rho_f(x)):x\in\Per^*(f)(\mathbf{k})\}$ is of infinite dimension.
\end{Rem}

\section{Proofs of Theorem \ref{thminvo} and Theorem \ref{thmmaindim} }\label{section6}
\subsection{Proof of Theorem \ref{thminvo}: the case $\mathbf{k}=\overline{\Q}$}\label{section6.1}
Assume that $\mathbf{k}=\overline{\Q}$ and let $\tau$ be an element in $\Gal(\overline{\Q}/\Q)$ with $\tau^2=\mathrm{Id}$.

Denote by $\sC_f$ the set of critical points of $f$. Since $f$ is not PCF, there exists $o\in\sC_f$ such that the (forward) $f$-orbit $O_f(o)$ of $o$ is infinite. Fix such a critical point $o$. Let $X$ be the union of all (forward) orbits of periodic critical points of $f$. Then $X$ is finite.

Pick a number field $K$ satisfying $\tau(K)=K$ such that $f$, $o$, and all points in $X$ are defined over $K$.

Denote by $\sM_K$ the set of all places of $K$. Take a finite subset $B\subseteq\sM_K$ containing all archimedean places, satisfying $\tau(B)=B$, and such that $f$ has good reduction at $v$ for every $v\in\sM_K\setminus B$. Then $\tau(\sO_{K, B})=\sO_{K, B}$. For $x\in\Per(f)(\overline{\Q})$, set $\la(x)=(n_f(x))^{-1}\rog(\rho_f(x))\in\D(\overline{\Q})_{\R}\cup\{\infty\}$. Recall that $n_f(x)\in\Z_{>0}$ is the exact period of $x$ and $\rho_f(x)=df^{n_f(x)}(x)\in\overline{\Q}$ is the multiplier of $x$. For all $x\in\Per^*(f)(\overline{\Q})$, we have $\mathbf{n}_K (\la(x))\in O:=\D(\sO_{K, B})\otimes_{\Z}\Q_{>0}$ by \cite[Corollary 2.23 (a)]{Silverman2007}. Note that $O=\tau(O)$ is invariant under $\tau$, where $\tau$ descends to $\D(K)$ and is extended $\Q$-linearly to $\D(K)_\Q$.

Denote by $\C_v$ the completion of the algebraic closure of $K_v$ for $v\in\sM_K$. Every embedding $\sigma:\overline{\Q}\hookrightarrow\C_v$ gives a bijection $\sigma:\Per(f)(\overline{\Q})\to\Per(f)(\C_v)$. Observe that for every $x\in\Per(f)(\overline{\Q})$, we have $\sigma(\la(x))=\la(\sigma(x))$.

For every $v\in\sM_K\setminus B$ and $x\in\P^1(\C_v)$, denote by $\tilde{x}\in\P^1(\overline{\widetilde{K_v}})$ the reduction of $x$ in the special fiber at $v$ and $f_v:\P^1_{\overline{\widetilde{K_v}}}\to\P^1_{\overline{\widetilde{K_v}}}$ the reduction of $f$. (Here, $\overline{\widetilde{K_v}}$ is a fixed algebraic closure of the residue field $\widetilde{K_v}$ of $K_v$.) After enlarging $B$, we may assume that $\tilde{o}\notin X_v$ for every $v\in\sM_K\setminus B$, where $X_v$ is the reduction of $X$ contained in $\P^1(\widetilde{K_v})\subseteq\P^1(\overline{\widetilde{K_v}})$.

Observe that for every $x\in\Per(f)(\overline{\Q})$ of exact period $n\geq1$ and every $v\in\sM_K\setminus B$, we have $\phi_v(\mathbf{n}_K(\la(x)))\geq0$ by \cite[Corollary 2.23 (a)]{Silverman2007}. Moreover, the following statements are equivalent by Hensel's lemma:

({\romannumeral 1}) $\phi_v(\mathbf{n}_K(\la(x)))>0$;

({\romannumeral 2}) there exists an embedding $\sigma:\overline{\Q}\hookrightarrow\C_v$ such that $(f^n_v)^\prime(\widetilde{\sigma(x)})=0$;

({\romannumeral 3}) there exist an embedding $\sigma:\overline{\Q}\hookrightarrow\C_v$, $q\in\sC_f$, and $m\in\Z_{\geq0}$, such that $\widetilde{\sigma(q)}$ is $f_v$-periodic with $\widetilde{\sigma(x)}=f_v^m(\widetilde{\sigma(q)})$.

For every $v\in\sM_K\setminus B$, denote by $P_v$ the union of all $f_v$-orbits of periodic critical points of $f_v$. The set $P_v$ is finite. For every $v\in\sM_K\setminus B$ and $q\in P_v$, by Hensel's lemma there exists a unique periodic point $y=y(q)\in\Per(f)(\C_v\cap\overline{\Q})$ such that $\tilde{y}=q$. Then there exists a unique $\Gal(\overline{\Q}/K)$-orbit $O(q)$ in $\P^1(\overline{\Q})$ such that for some (hence every) $x\in O(q)$, there exists an embedding $\sigma:\overline{\Q}\hookrightarrow\C_v$ satisfying $\widetilde{\sigma(x)}=q$ (here $O(q)$ is the $\Gal(\overline{\Q}/K)$-orbit of $y=y(q)$). In particular, we have $X_v\subseteq P_v$ and $\cup_{q\in X_v}O(q)=X$. It follows that the set
$$Q_v:=\{x\in\Per(f)(\overline{\Q}):\phi_v(\mathbf{n}_K(\la(x)))>0\}=\bigcup_{q\in P_v}O(q)\subseteq\Per(f)(\overline{\Q})$$
is finite and $X\subseteq Q_v$. Moreover, $Q_v=X$ if and only if $P_v=X_v$.
\begin{Lem}\label{lem5.1}
	The set $S:=\{v\in\sM_K\setminus B:P_v\setminus X_v\neq\emptyset\}$ is infinite.
\end{Lem}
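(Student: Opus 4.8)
The plan is to produce infinitely many $v\in\sM_K\setminus B$ for which the reduction $\tilde o$ of the wandering critical point $o$ is itself $f_v$-periodic; each such $v$ will then lie in $S$. First I would observe that, after enlarging $B$ by finitely many places if necessary, $\tilde o$ is a critical point of $f_v$ for every $v\in\sM_K\setminus B$, since $f$ has good reduction at such $v$ and hence the ramification divisor of $f$ reduces to that of $f_v$. Now suppose $\tilde o$ is $f_v$-periodic for some $v\in\sM_K\setminus B$. Then $\tilde o$ is a periodic critical point of $f_v$, so its (finite) $f_v$-orbit is contained in $P_v$; since $\tilde o\notin X_v$ by the choice of $B$, this orbit is not contained in $X_v$, so $\tilde o\in P_v\setminus X_v$ and $v\in S$. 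Thus it suffices to show that the set of $v\in\sM_K\setminus B$ for which $\tilde o$ is $f_v$-periodic is infinite.

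For $n\geq1$ put $T_n:=\{v\in\sM_K\setminus B:\ \tilde o=\widetilde{f^n(o)}\text{ in }\P^1(\widetilde{K_v})\}$. Since $f$ has good reduction outside $B$, reduction commutes with iteration, so for $v\notin B$ the point $\tilde o$ is $f_v$-periodic if and only if $v\in\bigcup_{n\geq1}T_n$. Because $O_f(o)$ is infinite we have $o\neq f^n(o)$ in $\P^1(K)$ for every $n\geq1$, so each $T_n$ is finite; hence the problem reduces to showing that $\bigcup_{n\geq1}T_n$ is infinite.

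I would then argue by contradiction: assume $\bigcup_{n\geq1}T_n$ is finite and set $S':=B\cup\bigcup_{n\geq1}T_n$, a finite subset of $\sM_K$. By definition of the $T_n$, for every $n\geq1$ and every $v\in\sM_K\setminus S'$ the reductions of $f^n(o)$ and of $o$ are distinct; that is, every point of the forward orbit $\{f^n(o):n\geq1\}$ is $S'$-integral with respect to $o$, so infinitely many iterates of $o$ are $S'$-integral relative to $o$. But $O_f(o)$ is infinite, so $o$ is not preperiodic, and therefore by \cite[Lemma 4.1]{Benedetto2012}, a consequence of Siegel's theorem on $S$-integral points, only finitely many points of $O_f(o)$ can be $S'$-integral with respect to $o$ — a contradiction. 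The hard part is precisely this last step, which is exactly where the existence of a \emph{non-preperiodic} critical point $o$ of $f$ is essential; the remaining ingredients (critical points staying critical under good reduction, and the passage between "$\tilde o$ is periodic" and "$v\in S$", and the translation into $S$-integrality) are routine.
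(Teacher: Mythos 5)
Your proof is correct and takes essentially the same route as the paper's, which is a three-line argument: cite \cite[Lemma 4.1]{Benedetto2012} for the statement that the non-preperiodic point $o$ has periodic reduction $\tilde o$ at infinitely many $v\in\sM_K\setminus B$, and then note that for each such $v$ one has $\tilde o\in P_v\setminus X_v$, so $v\in S$. One caveat: in your final step you attribute to \cite[Lemma 4.1]{Benedetto2012} the finiteness of orbit points $S'$-integral relative to $o$, but that lemma's conclusion is precisely the "periodic reduction at infinitely many places" statement you are reducing to, so your detour through $S$-integrality re-derives the lemma while citing it for a different (Silverman/Siegel-type) finiteness theorem — one that does hold here since $o$, being non-preperiodic, is not an exceptional point, but which should be cited on its own terms; citing the lemma directly for the periodicity statement makes the detour unnecessary.
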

\begin{proof}
	Recall that $o\in\sC_f$ is a critical point of $f$ that is not $f$-preperiodic, and we assume that $\tilde{o}\notin X_v$ for every $v\in\sM_K\setminus B$. By \cite[Lemma 4.1]{Benedetto2012}, there are infinitely many $v\in\sM_K\setminus B$ such that there exists $n\in\Z_{>0}$ with $f_v^n(\tilde{o})=\tilde{o}$. For such a place $v$, we have $\tilde{o}\in P_v\setminus X_v$, which proves the lemma.
\end{proof}
\begin{Lem}\label{lem5.2}
	There exist a sequence $(x_i)_{i=1}^\infty$ in $\Per^*(f)(\overline{\Q})$ and a sequence $(v_i)_{i=1}^\infty$ in $\sM_K\setminus B$ such that $\phi_{v_i}(\mathbf{n}(\la(x_i)))>0$ for all $i\geq1$ and $\phi_{v_j}(\mathbf{n}(\la(x_i)))=0$ for $j\neq i$. In particular, $((\mathbf{n}(\la(x_i)))_{i=1}^\infty,(\phi_{v_i})_{i=1}^\infty)$ is an upper triangular $O$-system for $\D(K)_\Q$.
\end{Lem}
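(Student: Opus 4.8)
The plan is to construct the two sequences recursively, the engine being a sharpening of Lemma \ref{lem5.1}. First I would set $S^\prime:=\{v\in\sM_K\setminus B:f_v^{n}(\tilde o)=\tilde o\text{ for some }n\in\Z_{>0}\}$; by \cite[Lemma 4.1]{Benedetto2012} (the input to Lemma \ref{lem5.1}), together with the standing assumption $\tilde o\notin X_v$, the set $S^\prime$ is infinite, and $S^\prime\subseteq S$. For $v\in S^\prime$, writing $\tilde o_v$ for the reduction of $o$ at $v$, the point $\tilde o_v$ is a periodic critical point of $f_v$ lying in $P_v\setminus X_v$; let $y_v:=y(\tilde o_v)\in\Per(f)(\overline\Q)$ and let $O(\tilde o_v)\subseteq Q_v$ be its $\Gal(\overline\Q/K)$-orbit. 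Since $\tilde o_v\notin X_v$ we have $O(\tilde o_v)\cap X=\emptyset$, hence $O(\tilde o_v)\subseteq Q_v\setminus X\subseteq\Per^*(f)(\overline\Q)$, and every $x\in O(\tilde o_v)$ satisfies $\phi_v(\mathbf{n}(\la(x)))>0$. Recall also that for any $w\in\sM_K\setminus B$ and $x\in\Per^*(f)(\overline\Q)$ one has $\phi_w(\mathbf{n}(\la(x)))\geq 0$, with equality if and only if $x\notin Q_w$.

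Next I would run the recursion. Pick any $v_1\in S^\prime$ and any $x_1\in O(\tilde o_{v_1})$. Suppose $v_1,\dots,v_i\in S^\prime$ and $x_1,\dots,x_i$ have been chosen so that $x_k\in O(\tilde o_{v_k})$, $\phi_{v_k}(\mathbf{n}(\la(x_k)))>0$, and $\phi_{v_j}(\mathbf{n}(\la(x_k)))=0$ for $j\neq k$ with $j,k\leq i$. Set $F_i:=\bigcup_{k\leq i}Q_{v_k}$, a finite set of periodic points, and $T_i:=\{w\in\sM_K:\phi_w(\mathbf{n}(\la(x_k)))\neq 0\text{ for some }k\leq i\}$, which is finite because only finitely many valuations are nonzero on a given element of $\D(K)_\R$. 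The \emph{key claim} is that there is $v_{i+1}\in S^\prime\setminus T_i$ with $O(\tilde o_{v_{i+1}})\not\subseteq F_i$. Indeed, otherwise $O(\tilde o_v)\subseteq F_i$ for all but finitely many $v\in S^\prime$; since $F_i$ contains only finitely many Galois orbits, some orbit $O^*\subseteq F_i$ satisfies $O(\tilde o_v)=O^*$ for infinitely many $v\in S^\prime$, and fixing $z\in O^*$ the definition of $O(\cdot)$ forces $z$ to have the same reduction as $o$ at each such $v$, hence at infinitely many places of $K$, so $z=o$ --- impossible, since $z$ is periodic whereas $o$ is not preperiodic. Granting the claim, choose such a $v_{i+1}$ and any $x_{i+1}\in O(\tilde o_{v_{i+1}})\setminus F_i$. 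Then $x_{i+1}\in\Per^*(f)(\overline\Q)$ and $\phi_{v_{i+1}}(\mathbf{n}(\la(x_{i+1})))>0$; for $k\leq i$ we get $\phi_{v_k}(\mathbf{n}(\la(x_{i+1})))=0$ since $x_{i+1}\notin F_i\supseteq Q_{v_k}$, and $\phi_{v_{i+1}}(\mathbf{n}(\la(x_k)))=0$ since $v_{i+1}\notin T_i$; moreover $v_{i+1}\neq v_k$ because $v_k\in T_i$. Thus the construction continues indefinitely.

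The ``upper triangle'' conclusion is then formal: $(\phi_{v_i})_{i=1}^\infty\subseteq\Hom(\D(K)_\Q,\R)$ is an $O$-sequence for $O:=\D(\sO_{K,B})\otimes_\Z\Q_{>0}$ --- non-negativity on $O$ because each $v_i\notin B$, and the finiteness requirement because only finitely many valuations are nonzero on a given element --- each $\mathbf{n}(\la(x_i))$ lies in $O$ by \cite[Corollary 2.23 (a)]{Silverman2007}, and the relations above say exactly that $\phi_{v_i}(\mathbf{n}(\la(x_i)))\in\R_{>0}$ and $\phi_{v_j}(\mathbf{n}(\la(x_i)))=0$ for $j\neq i$ (in particular for $j>i$). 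I expect the key claim to be the only real obstacle: one must exclude the degenerate scenario in which the ``new'' periodic critical points $\tilde o_v$ all lift, over $\overline\Q$, into one of finitely many already-used Galois orbits --- and it is precisely here that the non-preperiodicity of $o$ is indispensable, through the elementary fact that two distinct points of $\P^1(\overline\Q)$ have equal reduction at only finitely many places of $K$.
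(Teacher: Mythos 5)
Your proposal is correct, and its recursive skeleton (get infinitely many good places from \cite[Lemma 4.1]{Benedetto2012}, then at each stage choose the next place outside a finite ``bad'' set $T_i$) is the same as the paper's. The one genuine divergence is in how you certify that the new periodic point avoids the old sets $Q_{v_k}$. You take $T_i$ to record only the places where the \emph{chosen} points $x_1,\dots,x_i$ have nonzero valuation, and you then need your ``key claim'' that $O(\tilde o_{v_{i+1}})\not\subseteq F_i=\bigcup_{k\le i}Q_{v_k}$, which you prove via the fact that a fixed periodic point $z\in\P^1(\overline\Q)$ can share its reduction with the non-preperiodic point $o$ at only finitely many places. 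That argument is sound (with the mild care you already take about embeddings and the number field $K(z)$), but it is an avoidable detour: the paper instead enlarges $T_m$ to contain all places at which \emph{any} point of $\bigcup_{k\le m}Q_{v_k}\setminus X$ has nonzero valuation (still a finite set, since that union is a finite set of points). With that choice, once $v_{m+1}\notin T_m$, the single inequality $\phi_{v_{m+1}}(\mathbf{n}(\la(x_{m+1})))>0$ already forces $x_{m+1}\notin\bigcup_{k\le m}Q_{v_k}$ --- no statement about Galois orbits or coincidence of reductions is needed, and any point of $Q_{v_{m+1}}\setminus X$ will do. So your proof buys a slightly stronger conclusion (the new orbit is set-theoretically disjoint from all previous $Q_{v_k}$) at the cost of an extra arithmetic input, while the paper's bookkeeping makes the inductive step purely valuation-theoretic.
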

\begin{proof}
We construct these sequences recursively.
	
By Lemma \ref{lem5.1}, $S$ is infinite. Pick $v_1\in S$, then there exists
$$x_1\in Q_{v_1}\setminus X\subseteq\Per^*(f)(\overline{\Q}).$$
We have $\phi_{v_1}(\mathbf{n}(\la(x_1)))>0$.

Assume that we have constructed $x_1,\dots,x_m\in\Per^*(f)(\overline{\Q})$ and $v_1,\dots,v_m\in\sM_K\setminus B$ such that $\phi_{v_j}(\mathbf{n}(\la(x_i)))\geq 0$ and equality holds if and only if $j\neq i$. The set $\cup_{i=1}^m Q_{v_i}\setminus X$ is finite. Then there exists a finite set $T_m\subseteq\sM_K$ such that for all $x\in\cup_{i=1}^m Q_{v_i}\setminus X$ and $v\in\sM_K\setminus T_m$, we have $\phi_v(\mathbf{n}(\la(x)))=0$. By Lemma \ref{lem5.1}, we can take a place $v_{m+1}\in S\setminus(\{v_1,\dots,v_m\}\cup T_m)$. Then $\phi_{v_{m+1}}(\mathbf{n}(\la(x_i)))=0$ for all $1\leq i\leq m$. Pick $x_{m+1}\in Q_{v_{m+1}}\setminus X$. We have $\phi_{v_{m+1}}(\mathbf{n}(\la(x_{m+1})))>0$. It follows that $x_{m+1}\notin\cup_{i=1}^m Q_{v_i}$. Thus $\phi_{v_i}(\mathbf{n}(\la(x_{m+1})))=0$ for all $1\leq i\leq m$. We conclude the proof of Lemma \ref{lem5.2}.
\end{proof}
The conclusion now follows from Corollary \ref{invoindep} and Remark \ref{rmkanyorder}.\qed

\subsection{Proof of Theorem \ref{thminvo}: the general case}\label{section6.2}
	Denote by $\sC_f$ the set of critical points of $f$. Since $f$ is not PCF, there exists $o\in\sC_f$ which is not $f$-preperiodic. Fix such a critical point $o$. Fix a subfield $K$ of $\mathbf{k}$ such that $K/\Q$ is finitely generated, $\tau(K)=K$, and $o,f$ are defined over $K$. Without loss of generality, we may assume that $\mathbf{k}=\overline{K}$.
	
	Take a finitely generated $\Z$-subalgebra $A$ of $K$ with ${\rm Frac}(A)=K$ and $\tau(A)=A$. After shrinking $\Spec(A)$, we may assume that there exists an endomorphism $f_A:\P^1_A\to\P^1_A$ over $A$ whose restriction $f_K:\P^1_K\to\P^1_K$ on the generic fiber $\P^1_K$ satisfies $f=f_K\otimes_K\mathbf{k}$.
	
	For every $c\in\Spec(A\otimes_\Z\overline{\Q})(\overline{\Q})$, denote by $f_c$ the specialization of $f_A$ at $c$, and $o_c$ the specialization of $o$ at $c$. Then $o_c$ is a critical point of $f_c$. By \cite[Lemma 3.3]{Ghioca2018}, we can take $c\in\Spec(A\otimes_\Z\overline{\Q})(\overline{\Q})$ such that the $f_c$-orbit of $o_c$ is infinite, i.e., $o_c$ is not $f_c$-preperiodic. In particular, the specialization map $f_c$ is not PCF. Take a number field $L\subseteq\overline{K}$ such that $c$ is defined over $L$ and $\tau(L)=L$. Denote by $A_1$ the $\Z$-algebra generated by $A$, $\sO_L$, and $\tau(\sO_L)$ in $A\otimes_{\Z}\overline{\Q}$. We may replace $\Spec(A)$ by a suitable non-empty affine Zariski open subset of $\Spec(A_1)$ such that $f_A:\P^1_A\to\P^1_A$ remains everywhere well-defined and $c\in\Spec(A\otimes_{\sO_L}L)(L)$ after the replacement. We view $\Spec(A)$ as an $\sO_L$-scheme. After shrinking $\Spec(A)$, the Zariski closure of $c$ in $\Spec(A)$ is isomorphic to $\Spec(\sO_{L,S})$ for a finite set $S\subseteq\sM_L$ of places of $L$ containing all archimedean places, which corresponds to a prime ideal $p$ of $A$.
	
	Denote by $s_p:\D(K)_\R\dashrightarrow\D(L)_\R$ the pseudo-morphism associated with $p$ as in \S \ref{sec5.2}. We have $s_p(\D(A))\subseteq\D(\sO_{L,S})\cup\{\infty\}$. Then for every $v\in\sM_L\setminus S$ and $\la\in\D(A)$, we have $\phi_v(s_p(\la))\in\R_{\geq0}\cup\{\infty\}$. Moreover, for every $\la\in\D(A)$ with $s_p(\la)\neq\infty$, there are only finitely many $v\in\sM_L\setminus S$ such that $\phi_v(s_p(\la))\neq0$.
	
	For every $y\in\Per(f)(\overline{K})$, denote by $y_c$ the set of $x\in\Per(f_c)(\overline{L})$ whose image is contained in the Zariski closure of the image of $y$ in $\P_A^1$. For every $y\in\Per(f)(\overline{K})$, $y_c$ is finite and nonempty. On the other hand, for every $x\in\Per(f_c)(\overline{L})$, the set $\{y\in\Per(f)(\overline{K}):x\in y_c\}$ is finite and nonempty. Moreover, if $x\in y_c$, then
	$$s_p(\mathbf{n}_K(\la(y)))=\mathbf{n}_L(\la(x)).$$
	
	Since the set of $x\in\Per(f_c)(\overline{L})$ with $\mathbf{n}_L(\la(x))=\infty$ is finite, the set
	$$W_c:=\{y\in\Per(f)(\overline{K}):s_p(\mathbf{n}_K(\la(y)))=\infty\}$$
	is also finite. Similarly, $W_{\tau(c)}:=\{y\in\Per(f)(\overline{K}):s_{\tau(p)}(\mathbf{n}_K(\la(y)))=\infty\}$ is finite.
	
	By Lemma \ref{lem5.2}, there exist sequences $(x_i)_{i=1}^\infty$ in $\Per(f_c)(\overline{L})$ and $(v_i)_{i=1}^\infty$ in $\sM_L\setminus S$ such that $((\mathbf{n}_L(\la(x_i)))_{i=1}^\infty,(\phi_{v_i})_{i=1}^\infty)$ is an upper triangular $(\D(\sO_{L,S})\otimes_\Z\Q_{>0})$-system for $\D(L)_{\Q}$. For every $i\in\Z_{>0}$, we can take $y_i\in\Per(f)(\overline{K})$ such that $x_i\in(y_i)_c$, and hence
	$$s_p(\mathbf{n}_K(\la(y_i)))=\mathbf{n}_L(\la(x_i)).$$
	After removing finitely many terms, we may assume that $y_i\notin W_c\cup W_{\tau(c)}$ for all $i\geq1$. It follows that $\mathbf{n}_K(\la(y_i))\in\rog(A\setminus(p\cup\tau(p)))\otimes_\Z\Q_{>0}$ for $i\geq1$. Observe that $(\phi_{v_i}\circ s_p)_{i=1}^\infty$ is a $(\rog(A\setminus(p\cup\tau(p)))\otimes_\Z\Q_{>0})$-sequence. It follows that
	$$((\mathbf{n}_K(\la(y_i)))_{i=1}^\infty,(\phi_{v_i}\circ s_p)_{i=1}^\infty)$$
	is an upper triangular $(\rog(A\setminus(p\cup\tau(p)))\otimes_\Z\Q_{>0})$-system for $\D(K)_\Q$. Since $$\rog(A\setminus(p\cup\tau(p)))\otimes_\Z\Q_{>0}$$ is invariant under $\tau$, we conclude the proof by Corollary \ref{invoindep} and Remark \ref{rmkanyorder}.\qed
	
\subsection{Proof of Theorem \ref{thmmaindim}}\label{section6.3}
	There are two cases: 
	
	1. The case where $f$ is PCF. According to \cite{Douady1993}, PCF maps are defined over $\overline{\Q}$ within the moduli space $\sM_d$ of rational maps of degree $d$, except for the family of flexible Latt\`es maps. Since $f$ is non-exceptional, it is defined over $\overline{\Q}$, and Theorem \ref{thmmaindim} was already proved in \S \ref{section4}.
	
	2. The case where $f$ is not PCF. Then Theorem \ref{thmmaindim} is a direct consequence of Theorem \ref{thminvo}.
	
	This completes the proof of Theorem \ref{thmmaindim}.\qed
	
\section{Proofs of the Applications}
\subsection{Proof of Theorem \ref{thm1.8}}
Recall that our goal is to prove the Zariski-dense orbit conjecture for $f=f_1\times\cdots\times f_N:X=\P^1_k\times\cdots\times\P^1_k\to X$, where $f_1,\dots,f_N:\P^1_k\to\P^1_k$ are non-constant endomorphisms on the projective line over an algebraically closed field $k$ of characteristic $0$.

Without loss of generality, we may assume that $k$ is of finite transcendence degree over $\Q$. Fix an embedding of $k$ into $\C$. We view $f$ as an endomorphism on $X$ defined over $\C$. According to \cite[Theorem 3.34]{Xie2022}, we may assume that all $f_j:\P^1\to\P^1$ have degree at least $2$ for $1\leq j\leq N$.

Assume first that all $f_j$ are not exceptional, $1\leq j\leq N$. Corollary \ref{cor1.6} implies that we can take $x_j\in\Per^*(f_j)(\C)$ for $1\leq j\leq N$ such that the multipliers $\rho_{f_1}(x_1),\dots,\rho_{f_N}(x_N)$ are multiplicatively independent in $\C$. After replacing $f$ by an iterate, we may assume that $f_j(x_j)=x_j$ for $1\leq j\leq N$, and the multipliers $(\rho_{f_j}(x_j)=f_j^\prime(x_j))_{j=1}^N$ remain multiplicatively independent. Let $x=(x_1,\dots,x_N)\in X(k)$. Then $x$ is a fixed point of $f$ (smooth in the fixed locus of $f$) such that the eigenvalues of $df\vert_x$ are nonzero and multiplicatively independent. The conclusion then follows from \cite{Amerik2011}.

If all $f_j$ are exceptional, $1\leq j\leq N$, this case is relatively easy, and we refer to \cite[Theorem 4.1, Theorem 1.14, and Lemma 3.30]{Xie2022} (see also \cite[\S 9.3]{Xie2022}).

Now assume that $0\leq s\leq N$ such that $f_1,\dots,f_s$ are not exceptional and $f_{s+1},\dots,f_N$ are exceptional. Let $l(f)=\min\{s,N-s\}\geq0$. The case $l(f)=0$ has just been treated. An induction on $l(f)N\in\Z_{\geq0}$ then proves Theorem \ref{thm1.8}, as shown in the last several paragraphs of \cite[\S 9.3]{Xie2022}.\qed

\subsection{Proof of Theorem \ref{pcfdec}}
Recall that $f:\P^1(\C)\to\P^1(\C)$ is a rational map of degree $\geq2$. We aim to study the condition: (1) $f$ is PCF. Our goal is to prove that (1) is equivalent to each of the conditions (2) and (3) in Theorem \ref{pcfdec}, which are determined by the multiplier spectrum and the length spectrum of $f$, respectively.
\medskip

Using the terminology and notations in \S \ref{sectionlinear}, it is clear that (2) and (3) are equivalent to the following conditions $(2)^\prime$ and $(3)^\prime$, respectively.

$(2)^\prime$ $\rho_f(x)\in\overline{\Q}$ for all $x\in\Per(f)(\C)$ and the $\Q$-subspace of $\D(\Q)_\Q$ generated by $\mathbf{n}_{\Q}(\rog(\rho_f(x)))$ for $x\in\Per^*(f)(\C)$ is of finite dimension over $\Q$.

$(3)^\prime$ $\lvert\rho_f(x)\rvert\in\overline{\Q}$ for all $x\in\Per(f)(\C)$ and the $\Q$-subspace of $\D(\Q)_\Q$ generated by $\mathbf{n}_\Q(\rog(\lvert\rho_f(x)\lvert))$ for $x\in\Per^*(f)(\C)$ is of finite dimension over $\Q$.

\medskip

We now prove that the three conditions (1), $(2)^\prime$, and $(3)^\prime$ are equivalent.

\medskip
(1) $\Rightarrow(2)^\prime$ and $(3)^\prime$:

Suppose that $f$ is PCF. By \cite{Douady1993}, PCF maps are defined over $\overline{\Q}$ in $\sM_d$, except for the family of flexible Latt\`es maps. If $f$ is flexible Latt\`es, then according to \cite[Lemma 5.6]{milnor2006lattes}, we have $\rho_f(x)\in\Z$ for all $x\in\Per(f)(\C)$. If $f$ is defined over $\overline{\Q}$, then clearly $\rho_f(x)\in\overline{\Q}$ for all $x\in\Per(f)(\C)=\Per(f)(\Q)$. Thus, in both cases, $\rho_f(x),\lvert\rho_f(x)\rvert\in\overline{\Q}$ for all $x\in\Per(f)(\C)$.

Suppose that $(2)^\prime$ is false, i.e.,
$$\dim_\Q{\rm span}_\Q\{\mathbf{n}_{\Q}(\rog(\rho_f(x))):x\in\Per^*(f)(\C)\}=\infty.$$
By \cite[Corollary 3.9]{milnor2006lattes}, $f$ cannot be a flexible Latt\`es map, so $f$ is defined over $\overline{\Q}$, and hence over some number field $K$. We use the notation and ideas from \S \ref{section6.1} with $\tau=\mathrm{Id}$. Take a finite subset $B\subseteq\sM_K$ containing all archimedean places such that $f$ has good reduction at $v$ for every $v\in\sM_K\setminus B$. For every $v\in\sM_K\setminus B$, the reduction $f_v$ is still PCF, and its critical orbits come from those of $f$. Arguing similarly as in \S \ref{section6.1}, it is easy to see that the set $$\mathcal{W}:=\{x\in\Per^*(f)(\C):\phi_v(\mathbf{n}_K(\la(x)))=0,\forall v\in\sM_K\setminus B\}$$ is co-finite in $\Per^*(f)(\C)$, i.e., $\Per^*(f)(\C)\setminus\mathcal{W}$ is finite. It is well-known that ${\rm rank}(\sO_{K,B}^\times)=(\#B)-1<\infty$ (cf. \cite[Theorem 3.12]{Narkiewicz2004}). Note that
$$\mathbf{n}_K(\rog(\rho_f(x)))\in\D(\sO_{K,B})$$
for all $x\in\Per^*(f)(\C)$ by \cite[Corollary 2.23 (a)]{Silverman2007}. We deduce that
$$\dim_\Q{\rm span}_\Q\{\mathbf{n}_K(\rog(\rho_f(x))):x\in\Per^*(f)(\C)\}\leq\#(\Per^*(f)(\C)\setminus\mathcal{W})+(\#B)-1$$
is finite, which implies $\dim_\Q{\rm span}_\Q\{\mathbf{n}_\Q(\rog(\rho_f(x))):x\in\Per^*(f)(\C)\}<\infty$, contradicting the assumption. Thus $(2)^\prime$ must hold.

$(3)^\prime$ follows from a similar argument corresponding to the case where $\tau$ is complex conjugation in \S \ref{section6.1}.

\medskip
$(2)^\prime$ or $(3)^\prime\Rightarrow$ (1):

Let $\tau$ be the identity on $\C$ in the case $(2)^\prime$ and complex conjugation in the case $(3)^\prime$. We prove $(2)^\prime$ $\Rightarrow (1)$ and $(3)^\prime$ $\Rightarrow(1)$ simultaneously. Suppose $f$ is not PCF. In particular, $f$ is not a flexible Latt\`es map. By $(2)^\prime$ (or $(3)^\prime$) and Theorem \ref{thmalglenintro}, the map $f$ is defined over $\overline{\Q}$, hence over some number field $K$. We use the notation and ideas from \S \ref{section6.1} for $\tau$. Arguing similarly as in \S \ref{section6.1}, after enlarging the number field $K$, we can choose a finite set $B\subseteq\sM_K$ of places containing all archimedean places satisfying the following conditions:
\begin{itemize}
	\item $K/\Q$ is Galois and $\tau(K)=K$, so $\tau|_K\in\Gal(K/\Q)$;
	\item $f$ has good reduction at $v$ for every $v\in\sM_K\setminus B$;
	\item $B$ is invariant under every $\sigma\in\Gal(K/\Q)$;
	\item $\tilde{o}\notin X_v$ for every $v\in\sM_K\setminus B$, where $o$ is a critical point of $f$ that is not $f$-preperiodic and $X$ is the union of all $f$-orbits of periodic critical points of $f$. (Here $o$ and all points in $X$ are assumed to be defined over $K$.)
\end{itemize}
A slight modification of the proof of Lemma \ref{lem5.2} shows that there exist a sequence $(x_i)_{i=1}^\infty$ in $\Per^*(f)(\overline{\Q})$ and a sequence $(v_i)_{i=1}^\infty$ in $\sM_K\setminus B$ satisfying:
\begin{align}
	\label{7.15}&\phi_{v_i}(\mathbf{n}_K(\la(x_i)))>0\text{ for all }i\geq1;\\
	\label{7.16}&\phi_{\sigma(v_j)}(\mathbf{n}_K(\la(x_i)))=0\text{ for all }i\neq j\text{ and }\sigma\in\Gal(K/\Q).
\end{align}
For $i\geq1$, let $p_i$ be the rational prime below $v_i$ and $\widetilde{B}\subseteq\sM_{\Q}$ the set of places of $\Q$ lying below the places in $B$. From \eqref{7.15} and \eqref{7.16}, it is easy to see that the pair $((\mathbf{n}_{\Q}(\la(x_i)))_{i=1}^\infty,(\phi_{p_i})_{i=1}^\infty)$ is an upper triangular $(\D(\sO_{\Q,\widetilde{B}})\otimes_\Z\Q_{>0})$-system for $\D(\Q)_\Q$. Note that $\D(\sO_{\Q,\widetilde{B}})\otimes_\Z\Q_{>0}$ is $\tau$-invariant, where $\tau$ descends to $\D(\Q)$ and is extended $\Q$-linearly to $\D(\Q)_\Q$. By Corollary \ref{invoindep} and Remark \ref{rmkanyorder}, this contradicts $(2)^\prime$ and $(3)^\prime$, respectively. Thus, $f$ must be PCF. \qed

\subsection*{Acknowledgement}
The second-named author Junyi Xie would like to thank Thomas Gauthier, Gabriel Vigny, Charles Favre, and Serge Cantat for helpful discussions. The authors would like to thank the anonymous referee for many thoughtful comments, which help to improve the manuscript. The first-named author would like to thank Beijing International Center for Mathematical Research in Peking University for the invitation. The first-named author, Zhuchao Ji, is supported by National Key R\&D Program of China (No.2025YFA1018300), NSFC Grant (No.12401106), and ZPNSF grant (No.XHD24A0201). The second- and third-named authors, Junyi Xie and Geng-Rui Zhang, are supported by NSFC Grant (No.12271007).

\subsection*{Declarations}
\subsubsection*{Data availability}
Data sharing is not applicable to this article because no data were generated or analyzed during the current study.
\subsubsection*{Conflict of interest}
The authors declare that they have no conflict of interest.

\bibliography{b}

\end{document}